\documentclass[12pt,a4paper,twoside]{article}

\usepackage{fancyhdr,graphicx,subfigure,psfrag}
\usepackage{amsmath,amsfonts,amsthm}
\usepackage[abs]{overpic}
\usepackage{epsfig}

\setlength{\oddsidemargin}{-1mm} \setlength{\evensidemargin}{-1mm}
\setlength{\textwidth}{162mm} \setlength{\topmargin}{0mm}
\setlength{\textheight}{215mm} \setlength{\headheight}{15pt}

\setlength\unitlength{1mm}

\fancyhead{}%
\fancyfoot{}%
\fancyhead[CE]{\textsl{M. Y. Mo}} \fancyhead[CO]{\textsl{Wishart
ensembles: 2 cut case}}


\newcommand{\Lie}{\mathcal{L}}
\newcommand{\tr}{\mathrm{tr}}

\newcommand{\p}{\partial}

\newtheorem{theorem}{Theorem}
\newtheorem{lemma}{Lemma}

\newtheorem{remark}{Remark}
\newtheorem{proposition}{Proposition}

\numberwithin{equation}{section}
\begin{document}
\title{Universality in Complex Wishart ensembles: The 1 cut case}
\author{M. Y. Mo}
\date{}
\maketitle
\begin{abstract} We considered $N\times N$ Wishart ensembles
in the class $W_\mathbb{C}\left(\Sigma_N,M\right)$ (complex Wishart
matrices with $M$ degrees of freedom and covariance matrix
$\Sigma_N$) such that $N_0$ eigenvalues of $\Sigma_N$ is 1 and
$N_1=N-N_0$ of them are $a$. We studied the limit as $M$, $N$, $N_0$
and $N_1$ all go to infinity such that $\frac{N}{M}\rightarrow c$,
$\frac{N_1}{N}\rightarrow\beta$ and $0<c,\beta<1$. In this case, the
limiting eigenvalue density can either be supported on 1 or 2
disjoint intervals in $\mathbb{R}_+$. In the previous paper
\cite{Mo}, we studied the universality in the case when the limiting
eigenvalue density is supported on 2 intervals and in this paper, we
continue the analysis and study the case when the support consists
of a single interval. By using Riemann-Hilbert analysis, we have
shown that under proper rescaling of the eigenvalues, the limiting
correlation kernel is given by the sine kernel and the Airy kernel
in the bulk and the edge of the spectrum respectively. As a
consequence, the behavior of the largest eigenvalue in this model is
described by the Tracy-Widom distribution.
\end{abstract}
\section{Introduction}
Let $X$ be an $M\times N$ (assuming $M\geq N$) matrix with i.i.d.
complex Gaussian entries whose real and imaginary parts have
variance $\frac{1}{2}$ and zero mean. Let $\Sigma_N$ be an $N\times
N$ positive definite Hermitian matrix with eigenvalues
$a_1,\ldots,a_N$ (not necessarily distinct). In this paper, we will
consider the case where $\Sigma_N$ has only 2 distinct eigenvalues,
1 and $a$ such that $N_1$ of its eigenvalues are $a$ and $N-N_1$ of
them are $1$. We will assume that $\frac{N}{M}\rightarrow c$ and
$\frac{N_1}{N}\rightarrow\beta$ as $N,M\rightarrow\infty$ and that
$0<c,\beta<1$. To be precise, we will assume the following
\begin{equation}\label{eq:lim}
cM-N=\tau_1=O(1),\quad N\beta-N_1=\tau_2=O(1),\quad
M,N,N_1\rightarrow\infty.
\end{equation}
Let $\Sigma_N^{\frac{1}{2}}$ be any Hermitian square root of
$\Sigma_N$. Then the columns of the matrix $X\Sigma_N^{\frac{1}{2}}$
are random vectors with variances $\frac{1}{2}\sqrt{a_j}$. Let the
matrix $B_N$ be the following
\begin{equation}\label{eq:BN}
B_N=\frac{1}{M}\Sigma_N^{\frac{1}{2}}X^{\dag}X\Sigma_N^{\frac{1}{2}},
\end{equation}
Then $B_N$ is the sample covariance matrix of the columns of
$X\Sigma_N^{\frac{1}{2}}$, while $\Sigma_N$ is the covariance
matrix. In particular, $B_N$ is a complex Wishart matrix in the
class $W_{\mathbb{C}}\left(\Sigma_N,M\right)$.

The sample covariance matrix is a fundamental tool in the studies of
multivariate statistics and its distribution is already known to
Wishart at around 1928 (See e.g. \cite{Muir})
\begin{equation}\label{eq:wishart}
\mathcal{P}(B_N)=\frac{1}{C}e^{-M\tr(\Sigma^{-1}B_N)}\left(\det
S\right)^{M-N},\quad M\geq N,
\end{equation}
for some normalization constant $C$.

Let $y_1,\ldots,y_N>0$ be the eigenvalues of the sample covariance
matrix $B_N$. Then in the case where $N_1$ eigenvalues of $\Sigma_N$
is $a$ and $N-N_1$ is $1$, the joint probability density function
(j.p.d.f) for the eigenvalues of $B_N$ is given by
\begin{equation}\label{eq:jpdf1}
\mathcal{P}(y)=\frac{1}{Z_{M,N}}\prod_{i<j}(y_i-y_j)\prod_{j=1}^Ny_{j}^{M-N}\det\left[
y_k^{d^N_j-1}e^{-Ma_j^{-1}y_k}\right]_{1\leq j,k,\leq N},
\end{equation}
where $Z_{M,N}$ is a normalization constant and $d^N_j$, $a_j$ are
given by
\begin{equation*}
\begin{split}
d^N_j&=j,\quad a_j=1,\quad 1\leq j\leq N-N_1,\\
d^N_j&=j-N+N_1,\quad a_j=a,\quad N-N_1< j\leq N.
\end{split}
\end{equation*}
In this paper we will study the asymptotic limit of the Wishart
distribution with $\frac{N}{M}\rightarrow c$ and
$\frac{N_1}{N}\rightarrow\beta$ as $M$, $N\rightarrow\infty$ in such
a way that $0<\beta,c<1$. In this case, the empirical distribution
function (e.d.f) $F_N$ of the eigenvalues will converge weakly to a
nonrandom p.d.f. $F$, which will be supported on either 1 or 2
intervals in $\mathbb{R}_+$. By applying the results of \cite{CS} to
our case, we can extract properties of the measure $F$ from the
solutions of an algebraic equation (See Section \ref{se:Stie} for
details)
\begin{equation}\label{eq:curve20}
\begin{split}
za\xi^3&+(A_2z+B_2)\xi^2+(z+B_1)\xi+1=0,\\
A_2&=(1+a),\quad B_2=a(1-c),\\
B_1&=1-c(1-\beta)+a(1-c\beta).
\end{split}
\end{equation}
The results in \cite{CS} imply that the real zeros of the function
$\frac{d z(\xi)}{d\xi}$ determines the boundary points of the
support of $F$. Since the zeros of $\frac{d z(\xi)}{d\xi}$ coincide
with the zeros of the following quartic polynomial,
\begin{equation}\label{eq:quartic1}
\begin{split}
a^2(1-c)\xi^4
&+2(a^2(1-c\beta)+a(1-c(1-\beta))\xi^3\\
&+(1-c(1-\beta)+a^2(1-c\beta)+4a)\xi^2 +2(1+a)\xi+1=0,
\end{split}
\end{equation}
the real roots of (\ref{eq:quartic1}) are important in the
determination of $\mathrm{Supp}(F)$. In particular, we have the
following result. (See Theorem \ref{thm:cuts})
\begin{theorem}\label{thm:main1}
Let $\Delta$ be the discriminant of the quartic polynomial
(\ref{eq:quartic1}). If $\Delta<0$, then the support of $F$ consists
of a single intervals.
\end{theorem}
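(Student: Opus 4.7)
The plan is to combine the Choi--Silverstein characterization of $\mathrm{Supp}(F)$ stated in the excerpt with the classical discriminant criterion for the root structure of a real quartic polynomial. Specifically, by the results of \cite{CS} the boundary points of $\mathrm{Supp}(F)\subset\mathbb{R}_+$ are the images under $z(\xi)$ of the real zeros of $dz/d\xi$, which in turn coincide with the real roots of the quartic (\ref{eq:quartic1}). The introduction already tells us that $\mathrm{Supp}(F)$ consists of either one or two disjoint intervals, so the set of boundary points has cardinality either $2$ or $4$. Hence the theorem reduces to showing that $\Delta<0$ forces (\ref{eq:quartic1}) to have exactly two real roots.

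First I would invoke the standard discriminant dichotomy for a real quartic with nonzero leading coefficient: $\Delta<0$ if and only if the polynomial has exactly two distinct real roots together with one pair of complex conjugate roots, while $\Delta>0$ corresponds to either four real or four non-real roots and $\Delta=0$ to a multiple root. In our setting the leading coefficient $a^2(1-c)$ is strictly positive since $a>0$ and $0<c<1$, so the dichotomy applies. The case $\Delta<0$ therefore yields exactly two real roots of (\ref{eq:quartic1}).

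Combining these two inputs, only two values $z(\xi_*)$ can occur as boundary points of $\mathrm{Supp}(F)$. Since a two-interval support would require four boundary points, the support must be a single interval, which is precisely the content of the theorem.

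The hard part, and the reason this argument looks short, is the first step: one has to show rigorously that the real roots of (\ref{eq:quartic1}) are in bijection (with the correct multiplicities and with $z(\xi_*)>0$) with the boundary points of $\mathrm{Supp}(F)$. In principle one must rule out the possibility that two distinct real zeros of $dz/d\xi$ are mapped by $z$ to the same value, that a critical value $z(\xi_*)$ fails to be positive, or that a real critical point of $z$ corresponds to an interior rather than a boundary point of the support. These points should be settled by the local analysis of the algebraic curve (\ref{eq:curve20}) that is promised in the detailed statement (Theorem \ref{thm:cuts}) of Section \ref{se:Stie}, and my proof would invoke that analysis to close the small gap.
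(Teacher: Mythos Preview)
Your argument is correct in outline and shares with the paper the crucial observation that $\Delta<0$ forces the quartic to have exactly two real roots. However, the paper does not argue by counting boundary points. It uses the full strength of the Choi--Silverstein criterion (Lemma~\ref{le:CS}): a point $z$ lies outside $\mathrm{supp}(\underline{F})$ if and only if $z=z(m)$ for some real $m$ (with $-1/m\notin\mathrm{supp}(H)$) satisfying $z'(m)>0$. The paper then determines the sign of $z'(\xi)$ on the entire real $\xi$-line---positive on $(-\infty,\gamma_1)\cup(\gamma_2,0)\cup(0,\infty)$, negative on $(\gamma_1,\gamma_2)$, with the poles $-1,-1/a$ trapped in $(\gamma_1,\gamma_2)$---and explicitly computes the images of the three positive-derivative intervals under $z$ (Lemma~\ref{le:image}), obtaining $\mathrm{supp}(\underline{F})^c=(-\infty,0)\cup(0,\lambda_1)\cup(\lambda_2,\infty)$. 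A separate pigeonhole argument (Lemma~\ref{le:1cut}) then rules out $\lambda_1\geq\lambda_2$, so the support is the single interval $[\lambda_1,\lambda_2]$.

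Your counting route is shorter but, as you correctly flag, leans on the a priori dichotomy ``one or two intervals'' announced in the introduction and on the injectivity of $\gamma_k\mapsto\lambda_k$ on the real roots. The paper does not assume the dichotomy; it derives it, and in the process also establishes $0<\lambda_1<\lambda_2$ and the location of the singularities of $z(\xi)$, facts that are reused later when determining the sheet structure of the Riemann surface (Proposition~\ref{pro:sheet}). So while your argument closes once Lemmas~\ref{le:lambdadis} and~\ref{le:1cut} are available, the paper's explicit computation of the complement buys more: it simultaneously pins down the support and prepares the geometric input for the subsequent Riemann--Hilbert analysis.
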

In our previous paper \cite{Mo}, we have shown that
$\mathrm{Supp}(F)$ consists of 2 disjoint intervals if and only if
$\Delta>0$ and have obtained the asymptotic eigenvalue statistics in
that case. In this paper, we consider the case when $\Delta<0$ and
together with \cite{Mo}, we have proven the universality in this
class of complex Wishart ensemble for all $\Delta\neq 0$. When
$\Delta=0$, a phase transition takes place and the support of the
eigenvalues splits into 2 disjoint intervals.

When $\Delta<0$, we also have the following expression for the
density function of $F$ (See Theorem \ref{thm:density}).
\begin{theorem}\label{thm:side1}
Let $\Delta$ be the discriminant of the quartic polynomial
(\ref{eq:quartic1}). Suppose $\Delta<0$ and let $\gamma_1<\gamma_2$
be the 2 real roots to (\ref{eq:quartic1}) and $\gamma_3$,
$\gamma_4$ be the 2 complex roots. Let $\lambda_k$, $k=1,\ldots,4$
be the following
\begin{equation*}
\lambda_k=-\frac{1}{\gamma_k}+c\frac{1-\beta}{1+\gamma_k}+c\frac{a\beta}{1+a\gamma_k},\quad
k=1,\ldots,4.
\end{equation*}
Then all the $\lambda_k$ are distinct and the p.d.f $F$ is supported
on $[\lambda_1,\lambda_2]$ with the following density $d
F(z)=\rho(z)dz$
\begin{equation}\label{eq:rho}
\begin{split}
\rho(z)=\frac{3}{2\pi}\left|\left(\frac{r(z)+\sqrt{-\frac{1}{27a^4z^4}D_3(z)}}{2}\right)^{\frac{1}{3}}-\left(\frac{r(z)-\sqrt{-\frac{1}{27a^4z^4}D_3(z)}}{2}\right)^{\frac{1}{3}}\right|,
\end{split}
\end{equation}
where $D_3(z)$ and $r(z)$ are given by
\begin{equation*}
\begin{split}
D_3(z)&=(1-a)^2\prod_{j=1}^4(z-\lambda_j),\\
r(z)&=\frac{1}{27}\Bigg(-\frac{2B_2^3}{a^3}z^{-3}+\left(\frac{9B_1B_2}{a^2}-\frac{6A_2B_2^2}{a^3}\right)z^{-2}+\left(\frac{9B_2}{a^2}+\frac{9B_1A_2}{a^2}-\frac{27}{a}-\frac{6A_2^2B_2}{a^3}\right)z^{-1}\\
&+\left(\frac{9A_2}{a^2}-\frac{2A_2^3}{a^3}\right)\Bigg).
\end{split}
\end{equation*}
The constants $A_1$, $B_1$ and $B_2$ in the above equation are
defined by
\begin{equation*}
A_2=(1+a),\quad B_2=a(1-c),\quad B_1=1-c(1-\beta)+a(1-c\beta).
\end{equation*}
The cube root in (\ref{eq:density}) is chosen such that
$\sqrt[3]{A}\in\mathbb{R}$ for $A\in\mathbb{R}$ and the square root
is chosen such that $\sqrt{A}>0$ for $A>0$.
\end{theorem}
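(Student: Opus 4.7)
The starting point is the identification, obtained in Section~\ref{se:Stie}, of the function $\xi=\xi(z)$ selected on the upper half plane from the algebraic curve~(\ref{eq:curve20}) with (an affine transform of) the Stieltjes transform of the limiting measure~$F$. Consequently the density $\rho(z)$ is recovered pointwise from $\mathrm{Im}\,\xi(z+i0)$. My plan is therefore to solve~(\ref{eq:curve20}), viewed as a cubic in $\xi$ with $z$ a real parameter, by Cardano's method, then read off the imaginary part of the relevant complex-conjugate pair of roots, and match the resulting closed form with~(\ref{eq:rho}).

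I would first depress the cubic via $\xi=\eta-\tfrac{A_{2}z+B_{2}}{3az}$, obtaining $\eta^{3}+p(z)\eta+q(z)=0$ with $p=\tfrac{z+B_{1}}{az}-\tfrac{1}{3}\bigl(\tfrac{A_{2}z+B_{2}}{az}\bigr)^{2}$ and a parallel expression for $q$. A routine expansion (which I would not spell out) shows that $-q(z)$ coincides exactly with the rational function $r(z)$ printed in the theorem, justifying that labelling. On the support, the cubic must have a real root together with two complex conjugates, hence $\tfrac{q^{2}}{4}+\tfrac{p^{3}}{27}>0$; Cardano then yields the two complex roots $-\tfrac{u+v}{2}\pm i\tfrac{\sqrt{3}}{2}(u-v)$, where $u,v$ are the real cube roots of $\tfrac{1}{2}\bigl(r\pm\sqrt{q^{2}/4+p^{3}/27}\bigr)$. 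Inserting this into the Stieltjes-to-density recipe produces~(\ref{eq:rho}), conditional on the further algebraic identity $\tfrac{q^{2}}{4}+\tfrac{p^{3}}{27}=-\tfrac{D_{3}(z)}{27\,a^{4}z^{4}}$.

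Establishing this last identity is the step I expect to be the main obstacle. The function $27\,a^{4}z^{4}\bigl(\tfrac{q^{2}}{4}+\tfrac{p^{3}}{27}\bigr)$ is a polynomial of degree four in $z$, and its zeros are exactly those values at which~(\ref{eq:curve20}) has a double root in $\xi$, i.e.\ the critical values of the rational map $\xi\mapsto z(\xi)=-\tfrac{1}{\xi}+c\tfrac{1-\beta}{1+\xi}+c\tfrac{a\beta}{1+a\xi}$. These critical values are precisely the $\lambda_{k}=z(\gamma_{k})$, with $\gamma_{k}$ running over the four roots of~(\ref{eq:quartic1}); the remaining content is a leading-coefficient check, carried out by direct polynomial manipulation, giving the factorisation $D_{3}(z)=(1-a)^{2}\prod_{k}(z-\lambda_{k})$.

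It then remains to justify the single-interval support and the distinctness of the $\lambda_{k}$. Under $\Delta<0$, Theorem~\ref{thm:main1} already gives a single-cut support, and the two real roots $\gamma_{1}<\gamma_{2}$ of~(\ref{eq:quartic1}) produce real $\lambda_{1},\lambda_{2}$, while the complex conjugate pair $\gamma_{3},\gamma_{4}$ gives $\lambda_{4}=\overline{\lambda_{3}}$. Consequently $(z-\lambda_{3})(z-\lambda_{4})=|z-\lambda_{3}|^{2}>0$ on the real axis, so $D_{3}(z)\le 0$ exactly for $z\in[\lambda_{1},\lambda_{2}]$; this simultaneously identifies the support as $[\lambda_{1},\lambda_{2}]$ and shows nonnegativity of the radicand of~(\ref{eq:rho}) there. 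The distinctness of the four $\lambda_{k}$ follows by contradiction: a coincidence $\lambda_{j}=\lambda_{k}$ for $\gamma_{j}\ne\gamma_{k}$ would add a double root of $z(\xi)-\lambda_{j}$ beyond the four $\gamma_{i}$ already accounted for, which is ruled out by a degree count once the non-degeneracy $\Delta\ne 0$ has been used.
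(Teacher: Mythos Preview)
Your approach coincides with the paper's: Cardano applied to the cubic~(\ref{eq:curve20}) followed by Stieltjes inversion for the density formula (the paper defers this computation to Theorem~6 of~\cite{Mo}), distinctness of the $\lambda_k$ by the same branch-point/degree count as in Lemma~\ref{le:lambdadis}, and the single-interval support $[\lambda_1,\lambda_2]$ established through the same machinery you invoke via Theorem~\ref{thm:main1} (the paper routes it through the Choi--Silverstein characterization, Lemmas~\ref{le:CS}, \ref{le:image}, \ref{le:1cut}, but the content is the same). One small slip to correct: since $D_3(z)=(az)^4\prod_{i<j}(\xi_i-\xi_j)^2=(az)^4(-4p^3-27q^2)$, the intermediate identity should read $\tfrac{q^{2}}{4}+\tfrac{p^{3}}{27}=-\tfrac{D_{3}(z)}{108\,a^{4}z^{4}}$, off by a factor of~$4$ from what you wrote; after taking the square root this still reproduces the radicand in~(\ref{eq:rho}) exactly, so the final matching is unaffected.
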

\begin{remark}\label{re:sqrt}
Since $D_3(z)$ can be written as
\begin{equation}
\frac{D_3(z)}{a^4z^4}=-27\left(r(z)\right)^2-4\left(p(z)\right)^3,
\end{equation}
for some polynomial $p(z)$ in $z^{-1}$. We see that if $r(z)$
vanishes at any of the $\lambda_k$, then $D_3(z)$ will have at least
a double root at these points, hence $r(\lambda_k)\neq0$. From this
and (\ref{eq:rho}), we see that the density $\rho(z)$ vanishes like
a square root at the points $\lambda_k$, $k=1,2$.
\begin{equation}\label{eq:rhok}
\rho(z)=\frac{\rho_k}{\pi}|z-\lambda_k|^{\frac{1}{2}}+O\left((z-\lambda_k)\right),\quad
z\rightarrow\lambda_k.
\end{equation}
\end{remark}
An open problem in the studies of Wishart ensembles is the
universality and the distribution of the largest eigenvalue.
Although the Wishart distribution is known for a long time, results
in the universality and the largest eigenvalue distribution were
only obtained recently and only for spiked models \cite{J} whose
covariance matrices are finite perturbations of the identity matrix
\cite{BaikD}, \cite{Baik95}, \cite{Baikspike}, \cite{DF},
\cite{El03}, \cite{Fo93}, \cite{J}, \cite{Jo}, \cite{W1}, \cite{W2}.
The result in this paper is one of the few results obtained for
models with covariance matrices that is not a finite perturbation of
the identity matrix. (See also \cite{El}, in which the largest
eigenvalue distribution was also derived for ensembles whose
covariance matrix is not a finite perturbation of the identity
matrix. However, in \cite{El}, the parameters $c=\frac{N}{M}$,
$\beta=\frac{N_1}{N}$ and the eigenvalues of the covariance matrix
have to satisfy a condition which is not true in our case.)

In this paper, we use an important result by Baik, Ben-Arous and
P\'ech\'e \cite{Baik95} which shows that the correlation functions
of the eigenvalues can be expressed in terms of a Fredholm
determinant with kernel $K_{M,N}(x,y)$. In \cite{BK1} and \cite{DF},
the authors have expressed this kernel in terms of multiple
orthogonal polynomials (See Section \ref{se:MOP} for details) and
have shown that the $m$-point correlation function for the Wishart
distribution (\ref{eq:wishart}) is given by
\begin{equation}\label{eq:mpoint}
\mathcal{R}_{m}^{(M,N)}(y_1,\ldots,y_m)=\det\left(K_{M,N}(y_j,y_k)\right)_{1\leq
j,k\leq m}
\end{equation}
where $\mathcal{R}_{m}^{(M,N)}(y_1,\ldots,y_m)$ is the $m$-point
correlation function
\begin{equation}\label{eq:corre}
\mathcal{R}_m^{(M,N)}(y_1,\ldots,y_m)=\frac{N!}{(N-m)!}\int_{\mathbb{R}_+}\cdots\int_{\mathbb{R}_+}
\mathcal{P}(y)dy_{m+1}\ldots dy_N.
\end{equation}
By computing the asymptotics of the correlation kernel in
(\ref{eq:mpoint}) through the asymptotics of multiple Laguerre
polynomials, we have proven the universality of the correlation
function when $M$, $N$ and $N_1\rightarrow\infty$.
\begin{theorem}\label{thm:main2}
Suppose $\Delta$ in Theorem \ref{thm:main1} is negative. Let
$\rho(z)$ be the density function of $F$ in Theorem \ref{thm:side1}.
Then for any $x_0\in(\lambda_1,\lambda_2)$ and $m\in\mathbb{N}$, we
have
\begin{equation}\label{eq:bulk}
\begin{split}
\lim_{N,M\rightarrow\infty}&\left(\frac{1}{M\rho(x_0)}\right)^m
\mathcal{R}_m^{(M,N)}\left(x_0+\frac{u_1}{M\rho(x_0)},\ldots,x_0+\frac{u_m}{M\rho(x_0)}\right)\\
&=\det\left(\frac{\sin
\pi(u_i-u_j)}{\pi(u_i-u_j)}\right)_{i,j=1}^{m}.
\end{split}
\end{equation}
uniformly for any $(u_1,\ldots,u_m)$ in compact subsets of
$\mathbb{R}^m$.

On the other hand, let $x_0=\lambda_k$, $k=1,2$, then for any
$m\in\mathbb{N}$, we have
\begin{equation}\label{eq:edge}
\begin{split}
\lim_{N,M\rightarrow\infty}&\left(\frac{1}{\left(M\rho_k\right)^{\frac{2}{3}}}\right)^m
\mathcal{R}_m^{(M,N)}\left(\lambda_k+(-1)^k\frac{u_1}{\left(M\rho_k\right)^{\frac{2}{3}}},\ldots,
\lambda_k+(-1)^k\frac{u_m}{\left(M\rho_k\right)^{\frac{2}{3}}}\right)\\
&=\det\left(\frac{\mathrm{Ai}(u_i)\mathrm{Ai}^{\prime}(u_j)
-\mathrm{Ai}^{\prime}(u_i)\mathrm{Ai}(u_j)}{u_i-u_j}\right)_{i,j=1}^m,
\end{split}
\end{equation}
uniformly for any $(u_1,\ldots,u_m)$ in compact subsets of
$\mathbb{R}^m$, where $\mathrm{Ai}(z)$ is the Airy function and
$\rho_k$, $k=1,2$ are the constants in (\ref{eq:rhok}).
\end{theorem}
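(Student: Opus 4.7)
The plan is to prove Theorem \ref{thm:main2} by applying the Deift--Zhou nonlinear steepest descent method to the Riemann--Hilbert problem (RHP) that characterizes the multiple Laguerre polynomials associated with the kernel $K_{M,N}$. Since $\Sigma_N$ has two distinct eigenvalues, the construction in \cite{BK1,DF} leads to a $3\times 3$ RHP for a matrix $Y(z)$ with weights $w_1(y)=y^{M-N}e^{-My}$ and $w_2(y)=y^{M-N}e^{-My/a}$ on $\mathbb{R}_+$. The correlation kernel is expressible through $Y$, so universality will follow once we obtain its pointwise asymptotics in the bulk and near the edges.

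First I would introduce two $g$-functions $g_1,g_2$ adapted to the equilibrium measure $\rho(z)\,dz$ of Theorem \ref{thm:side1}. These are essentially antiderivatives of two of the three branches $\xi_1(z),\xi_2(z),\xi_3(z)$ of the spectral cubic (\ref{eq:curve20}); each $\xi_j$ plays the role of a Stieltjes transform of a signed equilibrium measure, and the first transformation $Y\mapsto T$ uses $g_1,g_2$ to normalize the RHP at infinity and to render the jump on $[\lambda_1,\lambda_2]$ oscillatory of unit modulus. The variational (Euler--Lagrange) inequalities for the equilibrium problem, which hold under the one-cut hypothesis $\Delta<0$, guarantee that the remaining jumps on $\mathbb{R}_+\setminus[\lambda_1,\lambda_2]$ are exponentially close to the identity as $M\to\infty$. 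A lens-opening transformation $T\mapsto S$ then factors the oscillatory jump into jumps on upper and lower lenses, which are also exponentially small off the cut.

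One is thereby reduced to solving a global model parametrix $P^{(\infty)}$ with jumps only on $[\lambda_1,\lambda_2]$, plus local parametrices near the soft edges. Because our RHP is $3\times 3$ and naturally lives on the three-sheeted Riemann surface defined by (\ref{eq:curve20}), $P^{(\infty)}$ must be constructed from meromorphic functions on that surface. When $\Delta<0$ the surface has two real branch points at $\lambda_1,\lambda_2$ and a pair of complex-conjugate branch points, so it is rational and $P^{(\infty)}$ can be realized explicitly in terms of the three branches $\xi_j(z)$ together with Szeg\H{o}-type factors accounting for the jumps of the weights on the cut. By Remark \ref{re:sqrt} the density $\rho$ vanishes like a square root at $\lambda_1,\lambda_2$, so $P^{(\infty)}$ has only mild singularities at the endpoints, which we resolve by patching in standard Airy parametrices $P^{(\lambda_k)}$ on small disks around each $\lambda_k$.

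Finally, the ratio $R=S\,(P^{(\infty)})^{-1}$ off the disks, and $R=S\,(P^{(\lambda_k)})^{-1}$ on them, solves a small-norm RHP whose jump is $I+O(M^{-1})$ uniformly on its contour, yielding $R=I+O(M^{-1})$. Unwinding the transformations inside the formula for $K_{M,N}$ produces the sine kernel (\ref{eq:bulk}) in the bulk from the structure of $P^{(\infty)}$ together with the local behavior of $g_1-g_2$ (which encodes $\rho(x_0)$), and yields the Airy kernel (\ref{eq:edge}) at the edges from the explicit Airy parametrix together with the square-root behavior $\rho_k$ from (\ref{eq:rhok}). The principal obstacle will be the explicit construction and analysis of the global parametrix on the three-sheeted Riemann surface: unlike the standard one-weight Wishart case, one must track the sheet structure, the branch permutation along the cut, and the behavior at the complex-conjugate branch points simultaneously, and this is precisely where the $\Delta<0$ assumption (which makes the surface rational) is indispensable.
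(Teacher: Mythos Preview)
Your outline follows the right general scheme, but it misses the central difficulty that the paper spends most of its effort on, and as written the argument would not close. The issue is the pair of \emph{complex} branch points $\lambda_3,\lambda_4=\overline{\lambda_3}$ that arise precisely when $\Delta<0$. You mention them only in passing, but they force you to choose a branch cut $\mathcal{C}$ joining $\lambda_3$ to $\lambda_4$, and after the first transformation the jump of $T$ on $\mathcal{C}$ is governed by $\exp\bigl(M(\tilde\theta_2^N-\tilde\theta_3^N)\bigr)$, which is \emph{not} made small by any Euler--Lagrange inequality for the equilibrium measure on $[\lambda_1,\lambda_2]$. To control it one must analyze the zero set $\mathfrak{H}=\{z:\mathrm{Re}(\theta_2^N-\theta_3^N)=0\}$, show it consists of two arcs $\mathfrak{H}_L,\mathfrak{H}_R$ from $\lambda_3$ to $\lambda_4$ through the real axis plus two unbounded arcs, place $\mathcal{C}$ between $\mathfrak{H}_L$ and $\mathfrak{H}_R$, and then open \emph{global} lenses $\Xi_L,\Xi_R$ (in the sense of \cite{BKext2}, \cite{Lysov}) from $\lambda_4$ to $\lambda_3$ through $\Omega_L$ and $\Omega_R$ using the matrices $\mathcal{G}_1,\mathcal{G}_2$ that mix the second and third columns. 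Only after this extra lens opening do all non-model jumps become exponentially small. Determining the topology of $\mathfrak{H}$ (Lemmas~\ref{le:symm}--\ref{le:inter}, Proposition~\ref{pro:shapeH}) and the sheet structure (which of $\xi_2,\xi_3$ branches at $\lambda_1$ versus $\lambda_2$, depending on whether $a\lessgtr1$; Proposition~\ref{pro:sheet}) is the genuine work here, and your proposal does not address it.

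Two smaller points also need correcting. First, you need three $g$-functions $g_1,g_2,g_3$ (equivalently $\tilde\theta_1^N,\tilde\theta_2^N,\tilde\theta_3^N$), one per sheet, not two; the normalization of the $3\times3$ problem at infinity uses all three. Second, local Airy parametrices are required at \emph{all four} branch points $\lambda_1,\ldots,\lambda_4$, not just at the real soft edges; the outer parametrix $S^\infty$ blows up like $(z-\lambda_k)^{-1/4}$ at $\lambda_3,\lambda_4$ as well. Finally, note that the surface is rational (genus zero) whether $\Delta>0$ or $\Delta<0$, so rationality is not what singles out the one-cut case; it is the location of the branch points (two real, two complex) and the resulting need for the $\mathfrak{H}$-analysis that distinguishes it.
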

Recall that the Airy function is the unique solution to the
differential equation $v^{\prime\prime}=zv$ that has the following
asymptotic behavior as $z\rightarrow\infty$ in the sector
$-\pi+\epsilon\leq \arg(z)\leq \pi-\epsilon$, for any $\epsilon>0$.
\begin{equation}\label{eq:asymairy}
\mathrm{Ai}(z)=\frac{1}{2\sqrt{\pi}z^{\frac{1}{4}}}e^{-\frac{2}{3}z^{\frac{3}{2}}}\left(1+O(z^{-\frac{3}{2}})\right)
,\quad -\pi+\epsilon\leq \arg(z)\leq \pi-\epsilon,\quad
z\rightarrow\infty.
\end{equation}
where the branch cut of $z^\frac{3}{2}$ in the above is chosen to be
the negative real axis.

Since the limiting kernel takes the form of the Airy kernel
(\ref{eq:edge}), by a well-known result of Tracy and Widom
\cite{TW1}, we have the following
\begin{theorem}\label{thm:TW}
Let $y_1$ be the largest eigenvalue of $B_N$, then we have
\begin{equation}
\lim_{M,N\rightarrow\infty}
\mathbb{P}\left(\left(y_1-\lambda_2\right)\left(M\rho_2\right)^{\frac{2}{3}}<s\right)
=TW(s),
\end{equation}
where $TW(s)$ is the Tracy-Widom distribution
\begin{equation}\label{eq:TW}
TW(s)=\exp\left(-\int_{s}^{\infty}(t-s)q^2(t)dt\right),
\end{equation}
and $q(s)$ is the solution of Painlev\'e II equation
\begin{equation*}
q^{\prime\prime}(s)=sq(s)+2q^3(s),
\end{equation*}
with the following asymptotic behavior as $s\rightarrow\infty$.
\begin{equation*}
q(s)\sim-\mathrm{Ai}(s),\quad s\rightarrow +\infty.
\end{equation*}
\end{theorem}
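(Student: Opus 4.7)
The plan is to reduce Theorem \ref{thm:TW} to a standard statement about Fredholm determinants of the Airy kernel, using Theorem \ref{thm:main2} as the essential analytic input. The first step is to rewrite the cumulative distribution of $y_1$ as a gap probability. Since the eigenvalues form a determinantal point process with correlation functions \eqref{eq:mpoint}, the inclusion-exclusion identity gives
\begin{equation*}
\mathbb{P}(y_1 < t) \;=\; \sum_{k=0}^{N}\frac{(-1)^k}{k!}\int_{t}^{\infty}\!\!\cdots\int_{t}^{\infty}\det\bigl(K_{M,N}(x_i,x_j)\bigr)_{i,j=1}^{k}\,dx_1\cdots dx_k \;=\;\det\bigl(I-K_{M,N}\!\!\upharpoonright_{(t,\infty)}\bigr),
\end{equation*}
the Fredholm determinant of $K_{M,N}$ restricted to $(t,\infty)$. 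I would then specialize $t=\lambda_2+s(M\rho_2)^{-2/3}$.

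Next I would perform the change of variables $x=\lambda_2+u(M\rho_2)^{-2/3}$, which rescales the kernel to
\begin{equation*}
\widetilde K_{M,N}(u,v)\;=\;\frac{1}{(M\rho_2)^{2/3}}\,K_{M,N}\!\left(\lambda_2+\frac{u}{(M\rho_2)^{2/3}},\,\lambda_2+\frac{v}{(M\rho_2)^{2/3}}\right),
\end{equation*}
and turns $\det(I-K_{M,N}\!\upharpoonright_{(t,\infty)})$ into $\det(I-\widetilde K_{M,N}\!\upharpoonright_{(s,\infty)})$. By the $m=1$ case of \eqref{eq:edge} (and its evident extension to the off-diagonal kernel, which follows from the same Riemann--Hilbert asymptotics used in Theorem \ref{thm:main2}), $\widetilde K_{M,N}(u,v)\to K_{\mathrm{Ai}}(u,v)$ uniformly on compact subsets of $\mathbb{R}^2$, where $K_{\mathrm{Ai}}$ is the Airy kernel appearing in \eqref{eq:edge}.

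The main obstacle is promoting this uniform-on-compacta convergence to convergence of Fredholm determinants on the unbounded interval $(s,\infty)$. For this I need a tail estimate of the form $|\widetilde K_{M,N}(u,u)|\le C e^{-c u^{3/2}}$ for all $u\ge s$ and all large $M,N$, matching the Airy kernel's decay. Such a bound is supplied by the Riemann--Hilbert analysis that underlies Theorem \ref{thm:main2}: outside the lens neighborhood of $\lambda_2$ the global parametrix is controlled by an exponentially decaying $g$-function, and the Airy parametrix provides the correct $u^{3/2}$ scaling inside a shrinking neighborhood. Given such a dominating bound, standard trace-class continuity of $A\mapsto\det(I-A)$ (together with convergence of the Hilbert--Schmidt norm via dominated convergence applied to $|\widetilde K_{M,N}(u,v)|^2$) yields
\begin{equation*}
\lim_{M,N\to\infty}\det\bigl(I-\widetilde K_{M,N}\!\!\upharpoonright_{(s,\infty)}\bigr) \;=\;\det\bigl(I-K_{\mathrm{Ai}}\!\!\upharpoonright_{(s,\infty)}\bigr).
\end{equation*}

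Finally, I invoke the Tracy--Widom identity from \cite{TW1}, which identifies the right-hand side with the expression \eqref{eq:TW} in terms of the Hastings--McLeod solution of Painlev\'e II. Combining the three steps gives
\begin{equation*}
\lim_{M,N\to\infty}\mathbb{P}\!\left((y_1-\lambda_2)(M\rho_2)^{2/3}<s\right)\;=\;TW(s),
\end{equation*}
as claimed. The only non-routine ingredient is the uniform tail estimate in Step~3; all other pieces are either a direct restatement of Theorem \ref{thm:main2} or classical.
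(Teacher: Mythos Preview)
Your proposal is correct and follows exactly the approach the paper intends: the paper itself offers no proof beyond the sentence ``Since the limiting kernel takes the form of the Airy kernel \eqref{eq:edge}, by a well-known result of Tracy and Widom \cite{TW1}, we have the following,'' so your argument is simply a careful unpacking of that citation. The one point you flag as non-routine---the uniform exponential tail estimate on the rescaled kernel needed to pass from compact convergence to trace-class convergence---is indeed the only genuine work, and it is also left implicit in the paper; your indication that it comes from the same Riemann--Hilbert asymptotics underlying Theorem~\ref{thm:main2} is the standard and correct justification.
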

The results obtained in this paper are obtained through the
Riemann-Hilbert analysis. As in \cite{BKext2} and \cite{Lysov}, a
Riemann surface of the form (\ref{eq:curve20}), together with the
zero set of a real function $h(x)$ related to this Riemann surface
are essential to the implementation of the Riemann-Hilbert analysis.
This function $h(x)$ is given as follows. If we express the
solutions $\xi$ in (\ref{eq:curve20}) as analytic functions of $z$,
then the three solutions to (\ref{eq:curve20}) behave as follows
when $z\rightarrow\infty$.
\begin{equation}\label{eq:xiinfty}
\begin{split}
\xi_1(z)&=-\frac{1}{z}+O(z^{-2}), \quad z\rightarrow\infty,\\
\xi_2(z)&=-1+\frac{c(1-\beta)}{z}+O(z^{-2}), \quad z\rightarrow\infty,\\
\xi_3(z)&=-\frac{1}{a}+\frac{c\beta}{z}+O(z^{-2}), \quad
z\rightarrow\infty.
\end{split}
\end{equation}
Then the function $h(x)$ is defined by
\begin{equation}\label{eq:hx}
h(x)=\mathrm{Re}\left(\int_{\lambda_3}^x\xi_2(z)-\xi_3(z)dz\right)
\end{equation}
In order to implement the Riemann-Hilbert analysis, we must
determine the sheet structure of the Riemann surface
(\ref{eq:curve20}) and the topology of the zero set $\frak{H}$ of
$h(x)$. Since our model depends on three parameters $c$, $a$ and
$\beta$, while the models in \cite{BKext2} and \cite{Lysov} depend
only on one parameter $a$, the determination of both the sheet
structure of the Riemann surface and the topology of $\frak{H}$ are
considerably more difficult in our case and a large part of this
paper is devoted to resolve these difficulties so that
Riemann-Hilbert analysis like those in \cite{BKext2} and
\cite{Lysov} can be applied.

To determine the sheet structure of (\ref{eq:curve20}), note that
although we assume the discriminant $\Delta$ in Theorem
\ref{thm:main1} is negative, which means that the Riemann surface
(\ref{eq:curve20}) has 2 real and 2 complex branch points, it is
unclear which Riemann sheet do these branch points belong to and
these different situations will result in different sheet structures
of the Riemann surface as indicated in Figure \ref{fig:struc}.
\begin{figure}
\centering \psfrag{x1}[][][1][0.0]{\small$\xi_1$}
\psfrag{x2}[][][1][0.0]{\small$\xi_2$}
\psfrag{x3}[][][1][0.0]{\small$\xi_3$}
\includegraphics[scale=0.5]{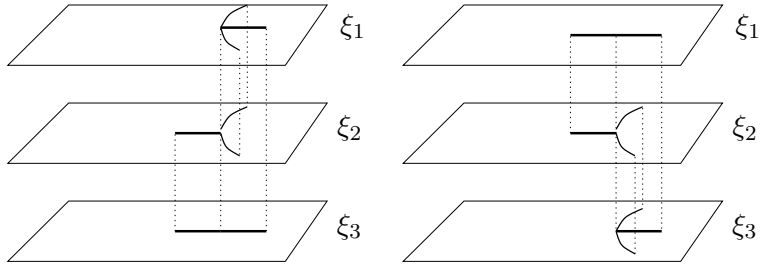}
\caption{Possible sheet structures of the Riemann surface. For the
implementation of the Riemann-Hilbert analysis, we need to show that
the Riemann surface has the sheet structure shown on the right hand
side.}\label{fig:struc}
\end{figure}
In order to determine the sheet structure of the Riemann surface, we
need to analyze the analyticity of the solutions $\xi_j$ in the
vicinity of the points $\lambda_k$ in Theorem \ref{thm:side1}. This
requires the type of analysis used in \cite{Mc}, which is very
difficult to carry out in our case. As in \cite{Mo}, we overcome
these difficulties by showing that the Stieltjes transform of the
limiting eigenvalue distribution satisfies (\ref{eq:curve20}). Then
by using properties of the Stieltjes transform obtained in
\cite{BS95}, \cite{BS98}, \cite{BS99}, \cite{CS}, \cite{S95}, we
were able to determine the sheet structure of the Riemann surface
(\ref{eq:curve20}).

The determination of the topology of the set $\frak{H}$ in this case
is also much more complicated and a thorough analysis of this set
making use of properties of harmonic functions is carried out in
Section \ref{se:geo}.

For theoretical reasons, the model studied in this paper is crucial
in understanding of the phase transition behavior that occurs in
Wishart ensembles. (See \cite{Baik95}). When the 2 intervals in the
support of $F$ closes up, a phase transition takes place and the
correlation kernel will demonstrate a different behavior at the
point where the support closes up. With the Riemann-Hilbert
technique used in this paper, such behavior can be studied
rigorously as in \cite{BKdou} and the eigenvalue correlation
function near the critical point will be given by the Pearcey
kernel. For practical reasons, many covariance matrices appearing in
fields of science are not finite perturbations of the identity
matrix. In fact, covariance matrices that have groups of distinct
eigenvalues are accepted models in various areas such as the
correlation of genes in microarray analysis and the correlation of
the returns of stocks in finance.

\subsection*{Acknowledgement}
The author acknowledges A. Kuijlaars for pointing out reference
\cite{Lysov} to me and EPSRC for the financial support provided by
the grant EP/D505534/1.

\section{Multiple Laguerre polynomials and the correlation
kernel}\label{se:MOP}

The main tool in our analysis involves the use of multiple
orthogonal polynomials and the Riemann-Hilbert problem associated
with them. In this section we shall recall the results in \cite{BK1}
and \cite{DF} and express the correlation kernel $K_{M,N}(x,y)$ in
(\ref{eq:mpoint}) in terms of multiple Laguerre polynomials. In
Section \ref{se:RHP}, we will apply Riemann-Hilbert analysis to
obtain the asymptotics of these multiple Laguerre polynomials and
use them to prove Theorem \ref{thm:main2}.

We shall not define the multiple Laguerre polynomials in the most
general setting, but only define the ones that are relevant to our
set up. Readers who are interested in the theory of multiple
orthogonal polynomials can consult the papers \cite{Ap1},
\cite{Ap2}, \cite{BK1}, \cite{vanGerKuij}. Let $L_{n_1,n_2}(x)$ be
the monic polynomial such that
\begin{equation}\label{eq:multiop}
\begin{split}
&\int_{0}^{\infty}L_{n_1,n_2}(x)x^{i+M-N}e^{-Mx}dx=0,\quad i=0,\ldots,n_1-1,\\
&\int_0^{\infty}L_{n_1,n_2}(x)x^{i+M-N}e^{-Ma^{-1}x}dx=0,\quad
i=0,\ldots, n_2-1.
\end{split}
\end{equation}
and let $Q_{n_1,n_2}(x)$ be a function of the form
\begin{equation}\label{eq:qform}
Q_{n_1,n_2}(x)=A^1_{n_1,n_2}(x)e^{-Mx}+A^a_{n_1,n_2}(x)e^{-Ma^{-1}x},
\end{equation}
where $A^1_{n_1,n_2}(x)$ and $A^a_{n_1,n_2}(x)$ are polynomials of
degrees $n_1-1$ and $n_2-1$ respectively, and that $Q_{n_1,n_2}(x)$
satisfies the following
\begin{equation}\label{eq:qorth}
\int_0^{\infty}x^iQ_{n_1,n_2}(x)x^{M-N}dx=\left\{
                                            \begin{array}{ll}
                                              0, & \hbox{$i=0,\ldots,n_1+n_2-2$;} \\
                                              1, & \hbox{$i=n_1+n_2-1$.}
                                            \end{array}
                                          \right.
\end{equation}
The polynomial $L_{n_1,n_2}(x)$ is called the multiple Laguerre
polynomial of type II and the polynomials $A^1_{n_1,n_2}(x)$ and
$A^a_{n_1,n_2}(x)$ are called multiple Laguerre polynomials of type
I (with respect to the weights $x^{M-N}e^{-Mx}$ and
$x^{M-N}e^{-Ma^{-1}x}$) \cite{Ap1}, \cite{Ap2}. These polynomials
exist and are unique. Moreover, they admit integral expressions
\cite{BK1}.

Let us define the constants $h^{(1)}_{n_1,n_2}$ and
$h^{(2)}_{n_1,n_2}$ to be
\begin{equation}\label{eq:normalize}
\begin{split}
h^{(1)}_{n_1,n_2}&=\int_{0}^{\infty}L_{n_1,n_2}(x)x^{n_1+M-N}e^{-Mx}dx,\\
h^{(2)}_{n_1,n_2}&=\int_{0}^{\infty}L_{n_1,n_2}(x)x^{n_2+M-N}e^{-Ma^{-1}x}dx.
\end{split}
\end{equation}

Then the following result in \cite{BK1} and \cite{DF} allows us to
express the correlation kernel in (\ref{eq:mpoint}) in terms of a
finite sum of the multiple Laguerre polynomials.
\begin{proposition}\label{pro:CD}
The correlation kernel in $K_{M,N}(x,y)$ (\ref{eq:mpoint}) can be
expressed in terms of multiple Laguerre polynomials as follows
\begin{equation}\label{eq:ker}
\begin{split}
\left(xy\right)^{\frac{N-M}{2}}(x-y)K_{M,N}(x,y)&=L_{N_0,N_1}(x)Q_{N_0,N_1}(x)\\
&-\frac{h^{(1)}_{N_0,N_1}}{h^{(1)}_{N_0-1,N_1}}L_{N_0-1,N_1}(x)Q_{N_0+1,N_1}(x)\\
&-\frac{h^{(2)}_{N_0,N_1}}{h^{(2)}_{N_0,N_1-1}}L_{N_0,N_1-1}(x)Q_{N_0,N_1+1}(x)
\end{split}
\end{equation}
where $N_0=N-N_1$.
\end{proposition}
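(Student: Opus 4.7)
The plan is to recognize that the eigenvalue j.p.d.f.\ (\ref{eq:jpdf1}) defines a biorthogonal (hence determinantal) point process, and then to apply a Christoffel--Darboux-type identity for multiple orthogonal polynomials. The result is essentially a specialization of the kernel formulas derived in \cite{BK1} and \cite{DF}; what follows is an outline of how one verifies it.

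First I would rewrite (\ref{eq:jpdf1}) in Andreief form as $\mathcal{P}(y)\propto \det[\phi_j(y_k)]_{j,k=1}^{N}\det[\psi_j(y_k)]_{j,k=1}^{N}$, absorbing the Vandermonde and the common factor $\prod_j y_j^{M-N}$ into one determinant of polynomials $\phi_j$ and collecting the weighted monomials $x^i e^{-Mx}$ and $x^i e^{-Ma^{-1}x}$ into the other. Borodin's theorem then identifies the correlation kernel as the reproducing kernel of the span of the $\psi_j$'s paired against the span of the $\phi_j$'s. Choosing the first basis to consist of the type~I multiple Laguerre functions $Q_{n_1,n_2}$ from (\ref{eq:qform})--(\ref{eq:qorth}), and the second to consist of the type~II polynomials $L_{n_1,n_2}$, both indexed along a path in the $(n_1,n_2)$-lattice from $(0,0)$ to $(N_0,N_1)$, diagonalizes the pairing by the defining orthogonality relations (\ref{eq:multiop}) and (\ref{eq:qorth}), so the kernel is expressible as a telescoping sum of products $L\cdot Q$ along this path.

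The second step is the Christoffel--Darboux contraction, which is most transparent via the Riemann--Hilbert formulation. Let $Y(z)$ be the $3\times 3$ matrix solving the RH problem of \cite{vanGerKuij} for the multi-index $(N_0,N_1)$ with weights $w_1(x)=x^{M-N}e^{-Mx}$ and $w_2(x)=x^{M-N}e^{-Ma^{-1}x}$. The first column of $Y$ encodes $L_{N_0,N_1}$ together with constant multiples of $L_{N_0-1,N_1}$ and $L_{N_0,N_1-1}$, while the last row of $Y$ encodes $Q_{N_0,N_1}$, $Q_{N_0+1,N_1}$ and $Q_{N_0,N_1+1}$. Writing
\begin{equation*}
(x-y)K_{M,N}(x,y)=\frac{1}{2\pi i}\begin{pmatrix} 0 & w_1(y) & w_2(y)\end{pmatrix} Y^{-1}(y)Y(x)\begin{pmatrix} 1\\ 0\\ 0\end{pmatrix},
\end{equation*}
expanding the $3\times 3$ matrix product, and using $Y^{-1}(y)Y(y)=I$ to eliminate the apparent pole at $x=y$, one obtains precisely the three-term expression in (\ref{eq:ker}). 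The ratios $h^{(1)}_{N_0,N_1}/h^{(1)}_{N_0-1,N_1}$ and $h^{(2)}_{N_0,N_1}/h^{(2)}_{N_0,N_1-1}$ arise as the normalizing constants relating the subleading entries of the first column of $Y$ to the monic polynomials $L_{N_0-1,N_1}$ and $L_{N_0,N_1-1}$, and the factor $(xy)^{(N-M)/2}$ on the left of (\ref{eq:ker}) is produced by symmetrizing the weight $x^{M-N}$ between $x$ and $y$.

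The main obstacle is the bookkeeping in the RH identification: one must pin down exactly which entries of $Y$ correspond to which $L_{n_1,n_2}$ and $Q_{n_1,n_2}$, verify that the telescoping along the path in the $(n_1,n_2)$-lattice contracts the long biorthogonal sum to only the three nearest neighbors of $(N_0,N_1)$, and check that the ratios of normalizations match those displayed in (\ref{eq:ker}). Since the underlying Christoffel--Darboux identity has already been proved in \cite{BK1} and \cite{DF}, the remaining work is to confirm that the definitions of $L_{n_1,n_2}$, $Q_{n_1,n_2}$ and $h^{(i)}_{n_1,n_2}$ used here (with the common weight $x^{M-N}dx$) match the normalizations in those papers, after which Proposition \ref{pro:CD} follows directly.
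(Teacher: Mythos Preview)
Your outline is correct and in fact goes beyond what the paper does: the paper does not prove Proposition~\ref{pro:CD} at all but simply quotes it as a known result from \cite{BK1} and \cite{DF}. Your sketch---identifying the j.p.d.f.\ as a biorthogonal ensemble, diagonalizing via the type~I/type~II multiple Laguerre systems, and contracting the kernel to three terms through the Riemann--Hilbert Christoffel--Darboux identity---is exactly the argument carried out in those references, and your closing remark that the proposition follows directly from \cite{BK1}, \cite{DF} once normalizations are matched is precisely the paper's own stance.
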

This result allows us to compute the limiting kernel once we obtain
the asymptotics for the multiple Laguerre polynomials.

\section{Stieltjes transform of the eigenvalue
distribution}\label{se:Stie}
In order to study the asymptotics of
the correlation kernel, we would need to know the asymptotic
eigenvalue distribution of the Wishart ensemble (\ref{eq:wishart}).
Let $F_N(x)$ be the empirical distribution function (e.d.f) of the
eigenvalues of $B_N$ (\ref{eq:BN}). The asymptotic properties of
$F_N(x)$ can be found by studying its Stieltjes transform.

The Stieltjes transform of a probability distribution function
(p.d.f) $G(x)$ is defined by
\begin{equation}\label{eq:stie}
m_G(z)=\int_{-\infty}^{\infty}\frac{1}{\lambda-z}dG(x), \quad
z\in\mathbb{C}^+=\left\{z\in\mathbb{C}:\mathrm{Im}(z)>0\right\}.
\end{equation}
Given the Stieljes transform, the p.d.f can be found by the
inversion formula
\begin{equation}\label{eq:inver}
G([a,b])=\frac{1}{\pi}\lim_{\epsilon\rightarrow
0^+}\int_{a}^b\mathrm{Im} \left(m_G(s+i\epsilon)\right)ds.
\end{equation}
The properties of the Stieltjes transform of $F_N(x)$ has been
studied by Bai, Silverstein and Choi in the papers \cite{CS},
\cite{BS95}, \cite{BS98}, \cite{BS99}, \cite{S95}. We will now
summarize the results that we need from these papers.

First let us denote the e.d.f of the eigenvalues of $\Sigma_N$ by
$H_N$, that is, we have
\begin{equation*}
dH_N(x)=\frac{1}{N}\sum_{j=1}^N\delta_{a_j}.
\end{equation*}
Furthermore, we assume that as $N\rightarrow\infty$, the
distribution $H_N$ weakly converges to a distribution function $H$.
Then as $N\rightarrow\infty$, the e.d.f $F_N(x)$ converges weakly to
a nonrandom p.d.f $F$, and that the Stieltjes transform $m_F$ of
$F(x)$ satisfies the following equation \cite{CS}, \cite{S95}
\begin{equation}\label{eq:algeq}
m_F(z)=\int_{\mathbb{R}}\frac{1}{t(1-c-czm_F)-z}dH(t).
\end{equation}
Let us now consider the closely related matrix $\underline{B}_N$
\begin{equation}\label{eq:BNline}
\underline{B}_N=\frac{1}{N}X\Sigma_NX^{\dag}.
\end{equation}
The matrix $\underline{B}_N$ has the same eigenvalues as $B_N$
together with an addition $M-N$ zero eigenvalues. Therefore the
e.d.f $\underline{F}_N$ of the eigenvalues of $\underline{B}_N$ are
related to $F_N$ by
\begin{equation}\label{eq:FNline}
\underline{F}_N=(1-c_N)I_{[0,\infty)}+c_NF_N,\quad c_N=\frac{N}{M}.
\end{equation}
where $I_{[0,\infty)}$ is the step function that is $0$ on
$\mathbb{R}_-$ and $1$ on $\mathbb{R}_+$. In particular, as
$N\rightarrow\infty$, the distribution $\underline{F}_N$ converges
weakly to a p.d.f $\underline{F}$ that is related to $F$ by
\begin{equation}\label{eq:Fline}
\underline{F}=(1-c)I_{[0,\infty)}+cF
\end{equation}
and their Stieltjes transforms are related by
\begin{equation}\label{eq:stielim}
m_{\underline{F}}(z)=-\frac{1-c}{z}+cm_F(z).
\end{equation}
Then from (\ref{eq:algeq}), we see that the Stieltjes transform
$m_{\underline{F}}(z)$ satisfies the following equation
\begin{equation}\label{eq:meq}
m_{\underline{F}}(z)=-\left(z-c\int_{\mathbb{R}}\frac{tdH(t)}{1+tm_{\underline{F}}}\right)^{-1}.
\end{equation}
This equation has an inverse \cite{BS98}, \cite{BS99}
\begin{equation}\label{eq:zeq}
z(m_{\underline{F}})=-\frac{1}{m_{\underline{F}}}+c\int_{\mathbb{R}}\frac{t}{1+tm_{\underline{F}}}dH(t).
\end{equation}
The points where $\frac{d z(\xi)}{d\xi}=0$ are of significant
interest to us as they are potential end points of the support of
$\underline{F}$, due to the following result by Choi and
Silverstein.
\begin{lemma}\label{le:CS}$\cite{CS}$ If $z\notin\mathrm{supp}(\underline{F})$,
then $m=m_{\underline{F}}(z)$ satisfies the following.
\begin{enumerate}
\item $m\in\mathbb{R}\setminus\{0\}$;
\item $-\frac{1}{m}\notin\mathrm{supp}(H)$;
\item Let $z$ be defined by (\ref{eq:zeq}), then $z^{\prime}(m)>0$, where the prime denotes
the derivative with respect to $m_{\underline{F}}$ in
(\ref{eq:zeq}).
\end{enumerate}
Conversely, if $m$ satisfies 1-3, then
$z=z(m)\notin\mathrm{supp}(\underline{F})$.
\end{lemma}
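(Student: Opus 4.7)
The plan is to treat the two implications separately, relying on the standard analyticity and monotonicity properties of the Stieltjes transform on the complement of the support, together with a holomorphic inverse function argument for the converse direction.

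For the forward direction I would assume $z\notin\mathrm{supp}(\underline{F})$ so that $m:=m_{\underline{F}}(z)$ is analytic at $z$ and real there, since $\overline{m_{\underline{F}}(\bar w)}=m_{\underline{F}}(w)$ and $z$ lies in a real interval disjoint from the support. For item 1, if $m=0$ then equation (\ref{eq:meq}) gives $0=-(z-c\int t\,dH(t))^{-1}$, which is impossible; so $m\in\mathbb{R}\setminus\{0\}$. Item 2 is immediate from (\ref{eq:zeq}): if $-1/m\in\mathrm{supp}(H)$ the integrand $t/(1+tm)$ has a non-integrable singularity, so the right-hand side cannot equal the finite value $z$. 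For item 3, differentiating $m_{\underline{F}}(z)=\int(\lambda-z)^{-1}d\underline{F}(\lambda)$ in $z$ off the support gives $m'_{\underline{F}}(z)=\int(\lambda-z)^{-2}d\underline{F}(\lambda)>0$, and since $z(\cdot)$ in (\ref{eq:zeq}) is a local inverse of $m_{\underline{F}}$ one has $z'(m)=1/m'_{\underline{F}}(z)>0$.

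For the converse, suppose $m_0$ satisfies 1--3 and set $z_0:=z(m_0)$. Item 2 ensures that $c\int t(1+tm)^{-1}dH(t)$ is holomorphic in a complex neighborhood of $m_0$, so $z(m)$ extends analytically there; by item 3 and the holomorphic inverse function theorem there is a holomorphic local inverse $\widetilde m(z)$ on a disk about $z_0$. The key step is then to identify $\widetilde m$ with $m_{\underline{F}}$ on that disk. For this I would use that both functions solve the defining relation (\ref{eq:meq}) and that, near $(z_0,m_0)$, this relation cuts out a unique analytic branch thanks to $z'(m_0)\neq 0$. Matching branches in $\mathbb{C}^+$, where $m_{\underline{F}}$ is the Stieltjes transform and takes values with positive imaginary part, forces $\widetilde m=m_{\underline{F}}$ on $\mathbb{C}^+$ near $z_0$, and hence on the whole disk by analytic continuation. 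Once $m_{\underline{F}}$ is analytic and real on a real neighborhood of $z_0$, the inversion formula (\ref{eq:inver}) shows $\underline{F}$ puts no mass there, i.e.\ $z_0\notin\mathrm{supp}(\underline{F})$.

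The main obstacle is the branch-matching step in the converse: one must rigorously rule out the possibility that the local holomorphic inverse $\widetilde m$ corresponds to a different analytic continuation of the equation than the one defining the Stieltjes transform. A careful treatment requires tracking the sign of $\mathrm{Im}(\widetilde m(z))$ as $z$ approaches the real axis from above, exploiting $\mathrm{Im}(m_{\underline{F}})>0$ on $\mathbb{C}^+$ together with the injectivity of $z(m)$ on a complex neighborhood of $m_0$ guaranteed by $z'(m_0)>0$. A secondary subtlety is excluding the possibility that $\mathrm{supp}(\underline{F})$ accumulates at $z_0$ from one side; this is handled by observing that the constructed analytic $\widetilde m$ on a two-sided real neighborhood of $z_0$ forces $m_{\underline{F}}$ to admit real boundary values with no jump, which by (\ref{eq:inver}) is incompatible with $z_0$ being a limit point of the support.
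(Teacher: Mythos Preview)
The paper does not prove this lemma at all; it is quoted verbatim from Choi and Silverstein \cite{CS} and used as a black box. So there is no in-paper proof to compare your proposal against.

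Your sketch is a reasonable outline of how the result is actually established in \cite{CS}. The forward direction is essentially as you describe: reality of $m$ by reflection, nonvanishing from the defining relation, and $z'(m)>0$ from $m_{\underline{F}}'(z)=\int(\lambda-z)^{-2}d\underline{F}(\lambda)>0$. One small caution on item~2: your argument that $\int t(1+tm)^{-1}\,dH(t)$ blows up when $-1/m\in\mathrm{supp}(H)$ is correct here because $H$ is purely atomic, but for general $H$ the integral can remain finite even when $-1/m$ is a support point, and Choi--Silverstein handle this case by a different argument (showing directly that $m_{\underline{F}}$ cannot take such values on the real axis outside the support). For the converse, you have correctly isolated the genuine difficulty, namely identifying the local holomorphic inverse $\widetilde m$ with the Stieltjes-transform branch $m_{\underline{F}}$; in \cite{CS} this is done by tracking the sign of $\mathrm{Im}\,m$ as $z$ approaches the axis from $\mathbb{C}^+$ and using uniqueness of the $\mathbb{C}^+$-valued solution of the functional equation, much as you suggest. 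Your proposal would, with those details filled in, reproduce the Choi--Silverstein proof.
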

This lemma allows us to identify the complement of
$\mathrm{supp}(\underline{F})$ by studying the real points $m$ such
that $z^{\prime}(m)>0$.
\begin{remark}\label{re:baik}
As pointed out in \cite{Baikspike}, Lemma \ref{le:CS} applies to any
distribution $G$ whose Stieltjes transform $m_{G}(z)$
(\ref{eq:stie}) satisfies an equation of the form
\begin{equation*}
z(m_{G})=-\frac{1}{m_{G}}+c_G\int_{\mathbb{R}}\frac{t}{1+tm_{G}}dH_G(t).
\end{equation*}
for some constant $c_G$ and distribution $H_G(t)$.
\end{remark}

\subsection{Riemann surface and the Stieltjes transform}
We will now restrict ourselves to the case when the matrix
$\Sigma_N$ has 2 distinct eigenvalues only. Without lost of
generality, we will assume that one of these values is 1 and the
other one is $a>0$. Let $0<\beta<1$, we will assume that as
$N\rightarrow\infty$, $N_1$ of the eigenvalues take the value $a$
and $N_0=N-N_1$ of the eigenvalues are 1 and that
$\frac{N_1}{N}\rightarrow\beta$. That is, as $N\rightarrow\infty$,
the e.d.f $H_N(x)$ converges to the following
\begin{equation}\label{eq:limedf}
dH_N(x)\rightarrow dH(x)=(1-\beta)\delta_1+\beta\delta_{a}.
\end{equation}
By substituting this back into (\ref{eq:zeq}), we see that the
Stieltjes transform $\xi(z)=m_{\underline{F}}(z)$ is a solution to
the following algebraic equation
\begin{equation}\label{eq:curve1}
z=-\frac{1}{\xi}+c\frac{1-\beta}{1+\xi}+c\frac{a\beta}{1+a\xi}.
\end{equation}
Rearranging the terms, we see that $\xi=m_{\underline{F}}$ solves
the following
\begin{equation}\label{eq:curve2}
\begin{split}
za\xi^3&+(A_2z+B_2)\xi^2+(z+B_1)\xi+1=0,\\
A_2&=(1+a),\quad B_2=a(1-c),\\
B_1&=1-c(1-\beta)+a(1-c\beta).
\end{split}
\end{equation}
This defines a Riemann surface $\Lie$ as a 3-folded cover of the
complex plane.

By solving the cubic equation (\ref{eq:curve2}) or by analyzing the
asymptotic behavior of the equation as $z\rightarrow\infty$, we see
that these solutions have the behavior given by (\ref{eq:xiinfty})
as $z\rightarrow\infty$. On the other hand, as $z\rightarrow 0$, the
3 branches of $\xi(z)$ behave as follows
\begin{equation}\label{eq:zeroasym}
\begin{split}
\xi_{\alpha}(z)&=-\frac{1-c}{z}+O(1),\quad z\rightarrow 0,\\
\xi_{\beta}(z)&=R_1+O(z),\quad z\rightarrow 0,\\
\xi_{\gamma}(z)&=R_2+O(z),\quad z\rightarrow 0.
\end{split}
\end{equation}
where the order of the indices $\alpha$, $\beta$ and $\gamma$ does
not necessarily coincide with the ones in (\ref{eq:xiinfty}) (i.e.
we do not necessarily have $\alpha=1$, $\beta=2$ and $\gamma=3$).
The constants $R_1$ and $R_2$ are the two roots of the quadratic
equation
\begin{equation}\label{eq:quad}
a(1-c)x^2+(1-c(1-\beta)+a(1-c\beta))x+1=0.
\end{equation}
The functions $\xi_j(z)$ will not be analytic at the branch points
of $\Lie$ and they will be discontinuous across the branch cuts
joining these branch points. Moreover, from (\ref{eq:zeroasym}), one
of the functions $\xi_j(z)$ will have a simple pole at $z=0$. Apart
from these singularities, however, the functions $\xi_j(z)$ are
analytic.

\subsection{Sheet structure of the Riemann surface}
As explained in the introduction, the determination of the sheet
structure of the Riemann surface (\ref{eq:curve20}) involves
difficult analysis of the behavior of the $\xi_j(z)$ at the points
$\lambda_k$. In order to implement Riemann-Hilbert analysis to
obtain the asymptotics of the kernel in (\ref{eq:mpoint}), we must
show that $\xi_1(z)$ is not analytic at the real branch points
$\lambda_k$, $k=1,2$. This will be achieved by making use of the
properties of the Stieltjes transform.

In this section we will study the sheet structure of the Riemann
surface $\Lie$. As we shall see, the branch $\xi_1(z)$ turns out to
be the Stieljes transform $m_{\underline{F}}(z)$ and its branch cut
will be the support of $\underline{F}$.

By Lemma \ref{le:CS}, the real points at which $\frac{d z}{d\xi}>0$
characterize the end points of $\mathrm{Supp}(\underline{F})$. The
determine the support, let us differentiate (\ref{eq:curve1}) to
obtain an expression of $\frac{d z}{d\xi}$ in terms of $\xi$.
\begin{equation}\label{eq:zprime}
\begin{split}
\frac{d
z}{d\xi}&=\frac{1}{\xi^2(1+\xi)^2(1+a\xi)^2}\Bigg(a^2(1-c)\xi^4
+2(a^2(1-c\beta)+a(1-c(1-\beta))\xi^3\\
&+(1-c(1-\beta)+a^2(1-c\beta)+4a)\xi^2 +2(1+a)\xi+1\Bigg).
\end{split}
\end{equation}
In particular, the values of $\xi$ at $\frac{d z}{d\xi}=0$
correspond to the roots of the quartic equation
\begin{equation}\label{eq:quart}
\begin{split}
a^2(1-c)\xi^4
&+2(a^2(1-c\beta)+a(1-c(1-\beta))\xi^3\\
&+(1-c(1-\beta)+a^2(1-c\beta)+4a)\xi^2 +2(1+a)\xi+1=0
\end{split}
\end{equation}
Let $\Delta$ be the discriminant of this quartic polynomial, then
when $\Delta<0$, the equation (\ref{eq:quart}) has 2 distinct real
roots $\gamma_1<\gamma_2$ and 2 complex roots $\gamma_3$ and
$\gamma_4=\overline{\gamma}_3$. One can check that the coefficients
of (\ref{eq:quart}) are all positive and hence
$\gamma_1<\gamma_2<0$.

Let $\lambda_k$ be the corresponding points in the $z$-plane
\begin{equation}\label{eq:lambda}
\begin{split}
\lambda_k&=-\frac{1}{\gamma_k}+c\frac{1-\beta}{1+\gamma_k}+c\frac{a\beta}{1+a\gamma_k},\quad
k=1,\ldots,4.
\end{split}
\end{equation}
Then we have the following
\begin{lemma}\label{le:lambdadis} The points $\lambda_k$,
$k=1,\ldots,4$ are all distinct.
\end{lemma}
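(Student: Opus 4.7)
The plan is to regard $z$ as a rational function of $\xi$ and argue by counting preimages on the Riemann sphere.

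First I would rearrange (\ref{eq:curve20}) to obtain the explicit expression
\[ z(\xi) = -\frac{B_{2}\xi^{2}+B_{1}\xi+1}{\xi(\xi+1)(a\xi+1)}, \]
and verify that the numerator and denominator are coprime by evaluating the numerator at the three roots $\xi=0,-1,-1/a$ of the denominator. A short computation yields the values $1$, $c(1-a)(1-\beta)$ and $c\beta(a-1)/a$ respectively, all nonzero under the standing assumptions $0<c,\beta<1$ and $a\neq 1$. Hence $z$ defines a degree-3 rational map on the Riemann sphere, so for every $w$ the equation $z(\xi)=w$ has exactly three solutions counted with multiplicity.

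Next I would observe from (\ref{eq:zprime}) that the roots $\gamma_{k}$ of the quartic (\ref{eq:quartic1}) are precisely the critical points of $z(\xi)$, and under $\Delta<0$ these four critical points are distinct (two real, $\gamma_{1}<\gamma_{2}$, together with a non-real conjugate pair $\gamma_{3},\overline{\gamma_{3}}$). At each $\gamma_{k}$ the function $z(\xi)-\lambda_{k}$ therefore vanishes to order at least $2$. Suppose for contradiction that $\lambda_{i}=\lambda_{j}=:\lambda^{*}$ for some $i\neq j$; then the fibre $z^{-1}(\lambda^{*})$ contains the two distinct points $\gamma_{i}$ and $\gamma_{j}$, each with multiplicity at least $2$, contributing at least $4$ to the total preimage count and contradicting degree $3$. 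This forces all four $\lambda_{k}$ to be pairwise distinct.

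I expect no real obstacle here; the only subtlety is in choosing the right viewpoint. A naive approach treating (\ref{eq:curve20}) as a cubic $F(\xi,z)$ in $\xi$ would require handling the degenerate value $z=0$ separately, since there the coefficient of $\xi^{3}$ vanishes and $F$ collapses to a quadratic in $\xi$. Passing to the Riemann sphere absorbs this degeneration into a preimage at infinity and yields a uniform argument, bypassing the delicate analysis of $\xi_{j}(z)$ near the branch points $\lambda_{k}$ that would otherwise be required.
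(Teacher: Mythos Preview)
Your proof is correct and takes essentially the same approach as the paper: both obtain a contradiction by counting preimages of the degree-$3$ map $z(\xi)$, observing that two distinct simple critical points sharing a critical value would force at least four preimages. The paper phrases this via the local Puiseux expansion $\xi(z)=\gamma_k\pm C_k(z-\lambda_k)^{1/2}+O(z-\lambda_k)$, yielding four branches of $\xi$ near a putative double critical value; your version is slightly cleaner in that it explicitly verifies coprimality (hence degree exactly $3$) and absorbs the $z=0$ degeneracy into the Riemann-sphere picture, points the paper leaves implicit.
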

\begin{proof}
The derivative $\frac{d z(\xi)}{d\xi}$ has simple zeros at the
points $(\lambda_{k},\gamma_{k})$, $k=1,\ldots,4$, on the Riemann
surface defined by (\ref{eq:curve2}). Therefore, as a function of
$z$, 2 branches of the function $\xi(z)$ behaves as
\begin{equation*}
\xi(z)=\gamma_{k}\pm
C_{k}(z-\lambda_{k})^{\frac{1}{2}}+O(z-\lambda_{k}),\quad
z\rightarrow\lambda_{k},
\end{equation*}
near the points $\lambda_{k}$. Let $i\neq j$, then Since
$\gamma_i\neq\gamma_j$, if $\lambda_i=\lambda_j$, there will be 4
distinct solutions $\xi(z)$ to the equation (\ref{eq:curve2}) in a
neighborhood of the point $\lambda_{i}=\lambda_j$, which is not
possible. Therefore $\lambda_{i}$ and $\lambda_j$ are distinct.
\end{proof} In particular, the points $\lambda_{3}$ and $\lambda_4$
are complex.
\begin{lemma}\label{le:lambdapm}
The points $\lambda_{3}$ and $\lambda_4$ are not real.
\end{lemma}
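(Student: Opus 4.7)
The statement should follow almost immediately from the preceding Lemma \ref{le:lambdadis}, together with the reality of the coefficients appearing in the definition of $\lambda_k$.

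My plan is first to observe that since the quartic (\ref{eq:quart}) has real coefficients and $\Delta < 0$ forces exactly two non-real roots, we have $\gamma_4 = \overline{\gamma_3}$. Next, the parameters $a$, $c$, $\beta$ are all real, so the rational function
\begin{equation*}
\Phi(\xi) = -\frac{1}{\xi} + c\frac{1-\beta}{1+\xi} + c\frac{a\beta}{1+a\xi}
\end{equation*}
has real coefficients; consequently it commutes with complex conjugation, and $\lambda_4 = \Phi(\gamma_4) = \Phi(\overline{\gamma_3}) = \overline{\Phi(\gamma_3)} = \overline{\lambda_3}$.

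Finally, suppose for contradiction that $\lambda_3 \in \mathbb{R}$. Then $\lambda_3 = \overline{\lambda_3} = \lambda_4$, which directly contradicts Lemma \ref{le:lambdadis}, which asserts that all four $\lambda_k$ are distinct. Hence $\lambda_3$ (and therefore $\lambda_4 = \overline{\lambda_3}$) cannot be real.

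There is no serious obstacle: the whole argument is a two-line conjugation symmetry combined with the already-established distinctness. The only thing to double-check is that none of the denominators $\gamma_3$, $1+\gamma_3$, $1+a\gamma_3$ vanish, which is immediate since $\gamma_3 \notin \mathbb{R}$ while $0$, $-1$, $-1/a$ are all real.
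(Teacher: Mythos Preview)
Your proof is correct and follows essentially the same approach as the paper: both arguments use the conjugation symmetry $\lambda_4=\overline{\lambda_3}$ (coming from $\gamma_4=\overline{\gamma_3}$ and the real coefficients of the map $z(\xi)$) and then invoke Lemma~\ref{le:lambdadis} to rule out $\lambda_3=\lambda_4$. Your version is slightly more explicit in justifying the conjugation step and in checking the denominators, but the underlying idea is identical.
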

\begin{proof} Since $\gamma_3=\overline{\gamma}_4$, we see that
$\lambda_3=\overline{\lambda}_4$. If $\lambda_{3}$ and $\lambda_4$
are real, then we will have $\lambda_3=\lambda_4$. This contradicts
Lemma \ref{le:lambdadis} and hence the points $\lambda_{3}$ and
$\lambda_4$ are not real.
\end{proof}
We will label the points $\gamma_3$ and $\gamma_4$ such that
$\mathrm{Im}(\lambda_3)>0$.

Note that, from the behavior of $z(\xi)$ in (\ref{eq:curve1}), we
see that near the points $-1$ and $-\frac{1}{a}$, the function
$z(\xi)$ behaves as
\begin{equation}\label{eq:zsing}
\begin{split}
z(\xi)&=\frac{c(1-\beta)}{1+\xi}+O(1),\quad \xi\rightarrow -1,\\
z(\xi)&=\frac{ca\beta}{1+a\xi}+O(1),\quad \xi\rightarrow
-\frac{1}{a}.
\end{split}
\end{equation}
and hence $z^{\prime}(\xi)$ is negative near these points. From this
and (\ref{eq:zprime}), we see that $z^{\prime}(\xi)>0$ on the
intervals $(-\infty,\gamma_1)$, $(\gamma_2,0)$ and $(0,\infty)$ and
none of the points $-1$ or $-\frac{1}{a}$ belongs to these
intervals. On $(\gamma_1,\gamma_2)$, $z^{\prime}(\xi)$ is negative.

The images of these intervals under the map (\ref{eq:curve1}) then
give us the complement of $\mathrm{supp}(\underline{F})$ in the
$z$-plane. Let us study these images
\begin{lemma}\label{le:image}
The intervals $(-\infty,\gamma_1)$, $(\gamma_2,0)$ and $(0,\infty)$
are mapped by $z(\xi)$ to $(0,\lambda_1)$, $(\lambda_2,\infty)$ and
$(-\infty,0)$ respectively.
\end{lemma}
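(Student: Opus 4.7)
The plan is to exploit the strict monotonicity of $z(\xi)$ already established on each of the three intervals, together with a computation of the one-sided limits of $z$ at the endpoints, to read off the images directly.

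First, I would verify that $z(\xi)$ is real-analytic on each of the three intervals, i.e., that the real singularities of $z$, namely $\xi=0,-1,-1/a$, all lie outside their interiors. From (\ref{eq:zprime}), $z^{\prime}(\xi)\to -\infty$ as $\xi\to -1$ and as $\xi\to -1/a$, so each of these two points lies in a maximal open subinterval of $\mathbb{R}\setminus\{0\}$ on which $z^{\prime}<0$. Since the quartic (\ref{eq:quart}) has only two real zeros $\gamma_1<\gamma_2$, the only such subinterval in $\mathbb{R}_-$ is $(\gamma_1,\gamma_2)$, and hence $\{-1,-1/a\}\subset(\gamma_1,\gamma_2)$. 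Consequently none of $-1,-1/a,0$ lies in the interior of $(-\infty,\gamma_1)$, $(\gamma_2,0)$ or $(0,\infty)$, so $z(\xi)$ is smooth and strictly increasing on each of these three intervals (the strict monotonicity being the content of the discussion preceding the lemma).

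Next, I would evaluate the one-sided limits at the endpoints using (\ref{eq:curve1}). Expanding each term for large $|\xi|$ gives $z(\xi)=-(1-c)\xi^{-1}+O(\xi^{-2})$, so $z(\xi)\to 0^+$ as $\xi\to -\infty$ and $z(\xi)\to 0^-$ as $\xi\to +\infty$. At $\xi=0^\pm$, the leading term $-1/\xi$ forces $z(\xi)\to \mp\infty$. Finally, by definition (\ref{eq:lambda}), $z(\gamma_k)=\lambda_k$ for $k=1,2$. Combining these endpoint values with the monotonicity on each interval yields the three claimed images; as a byproduct one obtains $0<\lambda_1$ and $\lambda_2$ finite, with $(0,\lambda_1)$ and $(\lambda_2,\infty)$ automatically disjoint since $\gamma_1<\gamma_2$ and $z$ is increasing on $(-\infty,\gamma_1)$ while only reaching $\lambda_1$ as a supremum.

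The only step requiring any actual thought is the localization $\{-1,-1/a\}\subset(\gamma_1,\gamma_2)$, which is forced by the sign of $z^{\prime}$ near its poles together with the count of real roots of the quartic; everything else is a direct endpoint evaluation using (\ref{eq:curve1}), so no serious obstacle is expected.
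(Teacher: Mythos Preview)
Your argument is correct and follows essentially the same route as the paper: locate the poles $-1,-1/a$ inside $(\gamma_1,\gamma_2)$ via the sign of $z'$, use strict monotonicity on each of the three intervals, and read off the images from the endpoint limits of $z(\xi)$ in (\ref{eq:curve1}). One small caveat: your closing parenthetical that $(0,\lambda_1)$ and $(\lambda_2,\infty)$ are ``automatically disjoint'' does not follow from the reasoning you give (monotonicity on $(-\infty,\gamma_1)$ alone says nothing about the relative position of $\lambda_1$ and $\lambda_2$); the paper establishes $\lambda_1<\lambda_2$ separately in Lemma~\ref{le:1cut}, and you should not regard it as a byproduct here.
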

\begin{proof} Since none of the points $-1$, $-\frac{1}{a}$ and $0$
belongs to these intervals both $z(\xi)$ and $z^{\prime}(\xi)$ are
continuous on these intervals. Moreover, $z(\xi)$ is strictly
increasing on these intervals. Therefore the images of these
intervals are given by
\begin{equation*}
\begin{split}
z\left((-\infty,\gamma_1)\right)&=(z(-\infty),z(\gamma_1))=(0,\lambda_1)\\
z\left((\gamma_2,0)\right)&=(z(\gamma_2),z(0^-))=(\lambda_2,\infty)\\
z\left((0,\infty)\right)&=(z(0^+),z(\infty))=(-\infty,0).
\end{split}
\end{equation*}
where the $\pm$ superscripts in the above indicates that the
function is evaluated at $\pm\epsilon$ for $\epsilon\rightarrow 0$.
\end{proof}
Therefore the complement of $\mathrm{supp}(\underline{F})$ is given
by (recall that $\underline{F}$ has a point mass at $0$)
\begin{equation}\label{eq:suppc}
\mathrm{supp}(\underline{F})^c=(-\infty,0)\cup(0,\lambda_1)\cup(\lambda_2,\infty).
\end{equation}
Let us now show that $\lambda_1<\lambda_2$. This would imply the
support of $\underline{F}$ is non-empty and consists of one
interval.
\begin{lemma}\label{le:1cut} Let $\lambda_1$ and $\lambda_2$ be the points defined
by (\ref{eq:lambda}), then $\lambda_1<\lambda_2$ and hence the
support of $\underline{F}$ consists of a single interval.
\end{lemma}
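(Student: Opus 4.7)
The plan is to argue by contradiction, using the description of $\mathrm{supp}(\underline{F})^c$ given by the preceding lemmas together with a direct moment computation.

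From Lemma \ref{le:image} and Lemma \ref{le:CS}, the complement of $\mathrm{supp}(\underline{F})$ in $\mathbb{R}$ is
\begin{equation*}
\mathrm{supp}(\underline{F})^{c}=(-\infty,0)\cup(0,\lambda_1)\cup(\lambda_2,\infty),
\end{equation*}
as recorded in (\ref{eq:suppc}). Suppose, for contradiction, that $\lambda_1\geq\lambda_2$. By Lemma \ref{le:lambdadis} the four points $\lambda_k$ are distinct, hence $\lambda_1>\lambda_2$, and therefore $(0,\lambda_1)\cup(\lambda_2,\infty)=(0,\infty)$. This forces $\mathrm{supp}(\underline{F})\subseteq\{0\}$, so $\underline{F}$ must equal the Dirac mass $\delta_0$.

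To rule this out, I would compare the asymptotic expansions of $m_{\underline{F}}(z)=\xi_1(z)$ and $m_{\delta_0}(z)=-1/z$ at infinity. The branch $\xi_1$ is singled out among the three branches by the $-1/z$ leading behavior in (\ref{eq:xiinfty}). Substituting $\xi_1(z)=-z^{-1}+Bz^{-2}+O(z^{-3})$ into (\ref{eq:curve1}) and matching the constant terms on the two sides yields $B=-c(1-\beta+a\beta)$; comparison with the general moment expansion $m_{\underline{F}}(z)=-z^{-1}-z^{-2}\int t\,d\underline{F}(t)+O(z^{-3})$ then gives $\int t\,d\underline{F}(t)=c(1-\beta+a\beta)>0$, contradicting $\underline{F}=\delta_0$. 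Therefore $\lambda_1<\lambda_2$, and $\mathrm{supp}(\underline{F})=\{0\}\cup[\lambda_1,\lambda_2]$; by (\ref{eq:Fline}), $\mathrm{supp}(F)=[\lambda_1,\lambda_2]$ is a single interval. The only calculation involved is the routine series inversion of (\ref{eq:curve1}) at infinity, so no substantive obstacle arises; the conceptual content is that the algebraic relation (\ref{eq:curve1}) automatically forces a nontrivial sub-leading term in $m_{\underline{F}}$, preventing $\underline{F}$ from degenerating to a point mass.
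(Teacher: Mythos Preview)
Your argument is correct, but it follows a genuinely different route from the paper's own proof. The paper argues by contradiction as well, but instead of invoking the moment expansion it exploits the degree of the covering map $z(\xi)$: assuming $\lambda_1>\lambda_2$, it picks $z_0\in(\lambda_2,\lambda_1)$ and locates four distinct $\xi$-preimages of $z_0$ --- one in each of $(-\infty,\gamma_1)$, $(\gamma_2,0)$, $(\gamma_1,s_1)$, $(s_2,\gamma_2)$, where $s_1,s_2\in\{-1,-1/a\}$ are the poles adjacent to $\gamma_1,\gamma_2$ --- contradicting that (\ref{eq:curve2}) is cubic in $\xi$. Your approach is shorter and avoids tracking the positions of the poles $-1,-1/a$ inside $(\gamma_1,\gamma_2)$; it simply observes that $\lambda_1>\lambda_2$ collapses the support to $\{0\}$ and then rules this out by the nonvanishing first moment forced by (\ref{eq:curve1}). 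The paper's argument, on the other hand, stays entirely on the algebraic-curve side and does not appeal to the identification $\xi_1=m_{\underline{F}}$ or to moment asymptotics, so it is slightly more self-contained from the Riemann-surface viewpoint.
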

\begin{proof} Suppose $\lambda_1>\lambda_2$. Let
$z_0\in(\lambda_2,\lambda_1)$. Since both the points $-1$ and
$-\frac{1}{a}$ belongs to $(\gamma_1,\gamma_2)$, the function
$z(\xi)$ is continuous in $(-\infty,\gamma_1)$ and $(\gamma_2,0)$.
By Lemma \ref{le:image}, these two intervals are mapped onto
$(0,\lambda_1)$ and $(\lambda_2,\infty)$ by the map $z(\xi)$. Hence
there is at least one point in each of the intervals
$(-\infty,\gamma_1)$ and $(\gamma_2,0)$ that is being mapped onto
$z_0$. On the other hand, let $s_1$ and $s_2$ be the points in
$\{-1,-\frac{1}{a}\}$ that is closer to $\gamma_1$ and $\gamma_2$
respectively, then $z(\xi)$ is continuous in $(\gamma_1,s_1)$ and
$(s_2,\gamma_2)$. Since these intervals are mapped onto
$(-\infty,\lambda_1,)$ and $(\lambda_2,\infty)$ respectively by
$z(\xi)$, there is at least one point in each of these intervals
that is mapped onto $z_0$. This would result in 4 distinct points
being mapped onto $z_0$ by the map $z(\xi)$, which is not possible.
Therefore we have $\lambda_1<\lambda_2$.
\end{proof}
Therefore we have the following
\begin{theorem}\label{thm:cuts}
Let $\Delta$ be the discriminant of the quartic polynomial
\begin{equation}\label{eq:quartic}
\begin{split}
a^2(1-c)\xi^4
&+2(a^2(1-c\beta)+a(1-c(1-\beta))\xi^3\\
&+(1-c(1-\beta)+a^2(1-c\beta)+4a)\xi^2 +2(1+a)\xi+1=0.
\end{split}
\end{equation}
then if $\Delta<0$, the support of $\underline{F}$ consists of a
single interval.
\end{theorem}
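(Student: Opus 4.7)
The plan is to combine the chain of lemmas just established in this section into a clean deduction. The key observation is that Lemma \ref{le:CS} reduces the determination of $\mathrm{supp}(\underline{F})^c$ to identifying the real values of $\xi$ at which $z^{\prime}(\xi)>0$ (together with the mild side conditions $\xi\neq 0$ and $-1/\xi\notin\{1,a\}$, which are easily checked), so the theorem becomes a real-variable analysis of the function $z(\xi)$ defined by (\ref{eq:curve1}).

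First I would exploit the hypothesis $\Delta<0$ to describe the real critical points of $z(\xi)$. From (\ref{eq:zprime}), the zeros of $z^{\prime}(\xi)$ are precisely the roots of the quartic (\ref{eq:quartic}); since all of its coefficients are strictly positive, its two real roots $\gamma_1<\gamma_2$ must both be negative. The sign analysis at the singularities $\xi=-1$ and $\xi=-1/a$ supplied by (\ref{eq:zsing}) forces both of these poles to lie inside $(\gamma_1,\gamma_2)$, and therefore gives a clean partition of $\mathbb{R}\setminus\{0,-1,-1/a\}$: one has $z^{\prime}(\xi)>0$ on $(-\infty,\gamma_1)\cup(\gamma_2,0)\cup(0,\infty)$ and $z^{\prime}(\xi)<0$ on $(\gamma_1,\gamma_2)\setminus\{-1,-1/a\}$.

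Next I would apply Lemma \ref{le:image} to compute the images of the three intervals on which $z^{\prime}>0$; by Lemma \ref{le:CS} these images exhaust $\mathrm{supp}(\underline{F})^c$, and they are precisely $(0,\lambda_1)$, $(\lambda_2,\infty)$, $(-\infty,0)$. Finally, Lemma \ref{le:1cut} guarantees $\lambda_1<\lambda_2$, so taking the complement inside $\mathbb{R}$ produces the single interval $[\lambda_1,\lambda_2]$ (with the point mass of $\underline{F}$ at $0$ encoded by (\ref{eq:Fline})), which is exactly the conclusion of the theorem.

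The only genuinely nontrivial ingredient is Lemma \ref{le:1cut}, and the reason it is delicate is precisely the obstacle I would flag: ruling out the possibility that $\mathrm{supp}(\underline{F})^c$ ``wraps around'' to produce two components. The counting argument that drives that lemma is the real hinge of the proof: if one had $\lambda_1>\lambda_2$, then a value $z_0\in(\lambda_2,\lambda_1)$ would acquire a preimage in each of the four disjoint monotonicity intervals $(-\infty,\gamma_1)$, $(\gamma_1,s_1)$, $(s_2,\gamma_2)$, $(\gamma_2,0)$, contradicting the fact that (\ref{eq:curve2}) is cubic in $\xi$ and therefore has at most three preimages for any fixed $z$. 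Once that step is in place, the theorem follows by concatenation of the preceding lemmas with no further work.
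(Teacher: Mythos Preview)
Your proposal is correct and follows essentially the same route as the paper: the theorem is stated immediately after Lemma~\ref{le:1cut} precisely because it is the concatenation of Lemma~\ref{le:CS}, the sign analysis around (\ref{eq:zprime})--(\ref{eq:zsing}), Lemma~\ref{le:image}, and Lemma~\ref{le:1cut}, with the latter's cubic preimage-counting argument as the only substantive step. Your identification of that lemma as the hinge, and your summary of why it works, match the paper exactly.
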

We can treat (\ref{eq:curve2}) as a polynomial in $\xi$ then
$\lambda_k$, $k=1,\ldots,4$ are the zeroes of its discriminant
$D_3(z)=(az)^4\prod_{i<j}(\xi_i-\xi_j)^2$.
\begin{equation}\label{eq:D3}
\begin{split}
D_3(z)&=(1-a)^2z^4+(2A_2^2B_1+2A_2B_2-4A_2^3-12aB_1+18aA_2)z^3\\
&+(B_2^2+A_2^2B_1^2+4A_2B_1B_2-12A_2^2B_2-12aB_1^2+18aB_2+18aA_2B_1-27a^2)z^2\\
&+(2B_1B_2^2+2A_2B_2B_1^2-12A_2B_2^2-4B_1^3a+18aB_1B_2)z+B_1^2B_2^2-4B_2^3.
\end{split}
\end{equation}
The zeros of (\ref{eq:D3}) then correspond to the branch points of
the Riemann surface $\Lie$. These branch points are given on $\Lie$
by $(\lambda_k,\gamma_k)$, for $k=1,\ldots,4$.

Since the leading coefficient of $D_3(z)$ is $(1-a)^2>0$, we see
that the sign of $D_3(z)$ and hence the 3 roots of the cubic
polynomial (\ref{eq:curve2}) behave as follows for $z\in\mathbb{R}$.
\begin{equation}\label{eq:realim}
\begin{split}
&1. \quad z\in\mathbb{R}\setminus [\lambda_1,\lambda_2],\quad
D_3(z)>0\Rightarrow
\textrm{$\xi$ has 3 distinct real roots} \\
&2. \quad z\in (\lambda_1,\lambda_2),\quad
D_3(z)<0\Rightarrow\textrm{$\xi$ has 1 real and 2 complex roots}.
\end{split}
\end{equation}
In particular, since the roots coincide at the branch points, the
$\gamma_k$, $k=1,\ldots,4$ is the values of the double root of the
cubic (\ref{eq:curve2}) when $z=\lambda_k$.

We can now compute the probability density $\underline{F}$.
\begin{theorem}\label{thm:density}
Suppose $\Delta<0$. Then the p.d.f $\underline{F}$ is supported on
$[\lambda_1,\lambda_2]$ with the following density
$d\underline{F}(z)=\rho(z)dz$
\begin{equation}\label{eq:density}
\frac{3}{2\pi}\left|\left(\frac{r(z)+\sqrt{-\frac{1}{27a^4z^4}D_3(z)}}{2}\right)^{\frac{1}{3}}-\left(\frac{r(z)-\sqrt{-\frac{1}{27a^4z^4}D_3(z)}}{2}\right)^{\frac{1}{3}}\right|,
\end{equation}
where $D_3(z)$ is given by (\ref{eq:D3}) and $r(z)$ is given by
\begin{equation*}
\begin{split}
r(z)&=\frac{1}{27}\Bigg(-\frac{2B_2^3}{a^3}z^{-3}+\left(\frac{9B_1B_2}{a^2}-\frac{6A_2B_2^2}{a^3}\right)z^{-2}+\left(\frac{9B_2}{a^2}+\frac{9B_1A_2}{a^2}-\frac{27}{a}-\frac{6A_2^2B_2}{a^3}\right)z^{-1}\\
&+\left(\frac{9A_2}{a^2}-\frac{2A_2^3}{a^3}\right)\Bigg).
\end{split}
\end{equation*}
The cube root in (\ref{eq:density}) is chosen such that
$\sqrt[3]{A}\in\mathbb{R}$ for $A\in\mathbb{R}$ and the square root
is chosen such that $\sqrt{A}>0$ for $A>0$.
\end{theorem}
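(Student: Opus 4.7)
The plan is to combine the Stieltjes inversion formula with the classical Cardano solution of the cubic (\ref{eq:curve2}). By Theorem~\ref{thm:cuts} we already know that $\mathrm{supp}(\underline{F})=[\lambda_1,\lambda_2]$, and from the analysis of Section~\ref{se:Stie} the Stieltjes transform $\xi_1(z)=m_{\underline{F}}(z)$ is the analytic branch of (\ref{eq:curve2}) on $\mathbb{C}\setminus[\lambda_1,\lambda_2]$ with the behaviour $\xi_1(z)=-1/z+O(z^{-2})$ at infinity from (\ref{eq:xiinfty}). Hence by (\ref{eq:inver}) the density is $\rho(z)=\pi^{-1}\,\mathrm{Im}\,\xi_1(z+i0)$ for $z\in(\lambda_1,\lambda_2)$, and it suffices to evaluate this imaginary part explicitly by solving the cubic.

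To do so, I would first normalise (\ref{eq:curve2}) by dividing by $az$ and then apply the Tschirnhaus substitution $\xi=\eta-(A_2z+B_2)/(3az)$, producing a depressed cubic $\eta^3+p(z)\eta+q(z)=0$. A routine expansion shows that $q(z)=-r(z)$, where $r(z)$ is the expression stated in the theorem, and that $p(z)$ is the polynomial in $z^{-1}$ referred to in Remark~\ref{re:sqrt}. The identity $\frac{D_3(z)}{a^4z^4}=-27\,r(z)^2-4\,p(z)^3$ from that remark then gives $\frac{q^2}{4}+\frac{p^3}{27}=-\frac{D_3(z)}{108\,a^4z^4}$, so Cardano's formula for the three roots takes the form
\begin{equation*}
\eta_k=\omega^{k}A(z)^{1/3}+\omega^{-k}B(z)^{1/3},\qquad k=0,1,2,
\end{equation*}
where $\omega=e^{2\pi i/3}$ and $A(z)$, $B(z)$ are exactly the two quantities appearing under the cube roots in (\ref{eq:density}), subject to the branch constraint $A^{1/3}B^{1/3}=-p/3$.

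Next I would analyse the nature of the roots on $(\lambda_1,\lambda_2)$. By (\ref{eq:realim}) we have $D_3(z)<0$ there, so $A(z)$ and $B(z)$ are real and the prescribed real cube-root convention makes $A^{1/3}$ and $B^{1/3}$ real. Consequently $\eta_0=A^{1/3}+B^{1/3}$ is real, while $\eta_1$ and $\eta_2$ form a complex-conjugate pair with imaginary parts $\pm\frac{\sqrt{3}}{2}\bigl(A^{1/3}-B^{1/3}\bigr)$. The Stieltjes transform $\xi_1(z+i0)$ has non-negative imaginary part (being the boundary value of a Herglotz function) and cannot be real in the interior of the support, so it must coincide with the complex root having positive imaginary part. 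Since the Tschirnhaus shift from $\xi$ to $\eta$ is real it does not contribute, and $|\,\mathrm{Im}\,\xi_1(z+i0)|$ equals a universal constant times $|A^{1/3}-B^{1/3}|$. Dividing by $\pi$ according to the inversion formula produces the stated expression (\ref{eq:density}).

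The main obstacle I anticipate is not the algebra but the careful identification of $\xi_1$ among the three Cardano branches and the justification that the branch conventions prescribed in the statement make (\ref{eq:density}) unambiguous. The analyticity of $\xi_1$ on $\mathbb{C}\setminus[\lambda_1,\lambda_2]$, together with the large-$z$ asymptotics in (\ref{eq:xiinfty}), characterises $\xi_1$ uniquely, so one must track this distinguished branch through the Cardano parametrisation as $z$ varies. The positivity of $\sqrt{-D_3/(27a^4z^4)}$ for $z\in(\lambda_1,\lambda_2)$, ensured by Theorem~\ref{thm:cuts} together with (\ref{eq:realim}), and the real cube-root convention then guarantee that the right-hand side of (\ref{eq:density}) is manifestly real and non-negative, matching the required density.
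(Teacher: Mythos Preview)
Your approach is correct and is essentially the same as the paper's: the paper does not give a proof here but simply defers to Theorem~6 of \cite{Mo}, which carries out precisely this Cardano-plus-Stieltjes-inversion computation (there in the two-cut case, but the argument transfers verbatim once the support has been identified via Theorem~\ref{thm:cuts} and (\ref{eq:realim})). The only points that require care are the identification of $\xi_1(z+i0)$ with the complex Cardano root of positive imaginary part and the compatibility of the real cube-root convention with the constraint $A^{1/3}B^{1/3}=-p/3$, both of which you have correctly addressed.
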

The proof of this theorem is the same as Theorem 6 in \cite{Mo}.

Let us now show that the function $\xi_1(z)$ is in fact the
Stieltjes transform $m_{\underline{F}}(z)$. From the asymptotic
behavior of the $\xi_j(z)$ (\ref{eq:xiinfty}) and the fact that the
Stieltjes transform $m_{\underline{F}}(z)$ is the unique solution of
(\ref{eq:curve2}) that vanishes as $z\rightarrow\infty$, we see that
\begin{equation}\label{eq:stiexi}
m_{\underline{F}}(z)=\xi_1(z).
\end{equation}
For $c<1$, it was shown in \cite{CS} that $F$ has a continuous
density and hence the Stieltjes transform $m_F(z)$ does not have any
poles. Therefore, by (\ref{eq:stielim}), we see that
$m_{\underline{F}}(z)$, and hence $\xi_1(z)$, has the following
singularity at $z=0$.
\begin{equation}\label{eq:xizero}
\xi_1(z)=-\frac{1-c}{z}+O(1),\quad z\rightarrow 0.
\end{equation}
Since $m_{\underline{F}}(z)$ is the Stieltjes transform of a measure
supported on the real axis, it is analytic away from the real axis
and hence by (\ref{eq:stiexi}), $\xi_1(z)$ is analytic at the points
$\lambda_3$ and $\lambda_4$ with a branch cut on
$[\lambda_1,\lambda_2]$. Since $\lambda_3$, $\lambda_4$ are not
branch points of the function $\xi_1(z)$, they must be branch points
of the functions $\xi_2(z)$ and $\xi_3(z)$. This determines the
branch structure of the Riemann surface $\Lie$. The branch cut of
$\xi_2(z)$ and $\xi_3(z)$ will eventually be chosen to be a contour
that goes from $\lambda_4$ to $\lambda_3$ intersecting the real axis
at a point in $(\lambda_1,\lambda_2)$, but in the next section it
will be chosen in a few different ways according to the situation.

\subsubsection{Geometry of the problem}\label{se:geo}
As explained in the introduction, the determination of the zero set
of $h(x)$ in (\ref{eq:hx}) is considerably more difficult than it is
in \cite{BKext2} and \cite{Lysov}. In this section we will carry out
a thorough analysis of this set and determine its topology.

For the implementation of the Riemann-Hilbert analysis, it is more
convenient to consider a measure $\hat{F}_{N}$ instead of the
measure $\underline{F}$.

Let $c_N=\frac{N}{M}$ and $H_N(t)$ be the e.d.f
\begin{equation*}
dH_N(t)=(1-\beta_N)\delta_1+\beta_N\delta_a.
\end{equation*}
where $\beta_N=\frac{N_1}{N}$.

Then $\hat{F}_N$ is the measure whose Stieltjes transform
$m_N(z)=m_{\hat{F}_N}(z)$ is the unique solution of
\begin{equation}\label{eq:mN}
z(m_N)=-\frac{1}{m_N}+c_N\int_{\mathbb{R}}\frac{t}{1+tm_N}dH_N(t),
\end{equation}
in $\mathbb{C}^+$ that behaves like $-\frac{1}{z}$ as
$z\rightarrow\infty$. Note that, as pointed out in \cite{Baikspike},
the measure $\hat{F}_N$ is not the eigenvalue distribution for
finite $N$, instead, it is only defined through the equation
(\ref{eq:mN}). From (\ref{eq:mN}), we see that $m_N(z)$ is the
solution of the algebraic equation
\begin{equation}\label{eq:curveN}
\begin{split}
za\xi^3&+(A_2z+B_2^N)\xi^2+(z+B_1^N)\xi+1=0, \\
B_2^N&=a(1-c_N),\quad B_1^N=1-c_N(1-\beta_N)+a(1-c_N\beta_N).
\end{split}
\end{equation}
that behaves as $-\frac{1}{z}$ as $z\rightarrow\infty$. If we denote
by $\xi_j^N(z)$ the solutions of (\ref{eq:curveN}) with asymptotic
behavior (\ref{eq:xiinfty}), but with $c$ and $\beta$ replaced by
$c_N$ and $\beta_N$, then we have $\xi_1^N(z)=m_N(z)$. In \cite{CS},
it was shown that the measure $\hat{F}_N$ has a continuous density
on $\mathbb{R}_+$ and a point mass of size $1-c_N$ at $0$ and hence
$\xi_1^N(z)$ also have the asymptotic behavior (\ref{eq:xizero})
near $z=0$ with $c$ replaced by $c_N$. By Remark \ref{re:baik}, all
the results in the previous section will remain valid for
(\ref{eq:curveN}) and $\xi_j^N(z)$, with $\beta$ and $c$ replaced by
$\beta_N$ and $c_N$. We will denote the Riemann surface defined by
(\ref{eq:curveN}) $\Lie_N$.

Let $\Delta_N$ be the determinant of the quartic polynomial
(\ref{eq:quart})
\begin{equation}\label{eq:quartN}
\begin{split}
a^2(1-c_N)\xi^4
&+2(a^2(1-c_N\beta_N)+a(1-c_N(1-\beta_N))\xi^3\\
&+(1-c_N(1-\beta_N)+a^2(1-c_N\beta_N)+4a)\xi^2 +2(1+a)\xi+1=0
\end{split}
\end{equation}
Since the determinant $\Delta$ of the quartic polynomial
(\ref{eq:quart}) is continuous in the parameters $a$, $\beta$ and
$c$, for large enough $N$, $M$ and $N_1$, we can assume that the
determinant $\Delta_N<0$. Then let $\gamma_1^N<\gamma_2^N$ be the
real roots of (\ref{eq:quartN}) and $\gamma_{3}^N$ and $\gamma_4^N$
be the complex conjugate roots of (\ref{eq:quartN}) and let
$\lambda_1^N<\lambda_2^N$ and $\lambda_{3}^N$, $\lambda_4^N$ be
their images under the map $z(\xi)$ given in (\ref{eq:mN}).

For the time being, we will choose the branch cut of $\xi_1^N(z)$ to
be the interval $[\lambda_1^N,\lambda_2^N]$, and the branch cut
between $\lambda_{3}^N$ and $\lambda_4^N$ to be a simple contour
$\mathcal{C}$ that is symmetric with respect to the real axis and
oriented upwards. Across $\mathcal{C}$, the two branches
$\xi_2^N(z)$ and $\xi_3^N(z)$ change into each other. The branch cut
$\mathcal{C}$ will be chosen such that it intersects the real axis
at exactly one point $x^{\ast}$ that is not equal to $\lambda_1^N$
or $\lambda_2^N$.

We will now define the functions $\theta_j^N(z)$ to be the the
integrals of $\xi_j^N(z)$.
\begin{equation}\label{eq:theta}
\begin{split}
\theta_1^N(z)&=\int_{\lambda_l^N}^z\xi_1^N(x)dx,\quad
\theta_2^N(z)=\int_{\lambda_{3}^N}^z\xi_2^N(x)dx,\quad
\theta_3^N(z)=\int_{\lambda_{3}^N}^z\xi_3^N(x)dx,\\
\textrm{if }x^{\ast}&<\lambda_2^N,\quad l=2;\quad \textrm{if
$x^{\ast}>\lambda_2^N,\quad l=1.$}
\end{split}
\end{equation}
The integration paths of the above integrals are chosen as follows.
If $x^{\ast}<\lambda_2^N$, then the integration path will not
intersect the set $\mathcal{C}\cup(-\infty,\lambda_2^N)$ and if
$x^{\ast}>\lambda_2^N$, then the integration path will not intersect
the set $\mathcal{C}\cup(\lambda_1^N,\infty)$.

Then from (\ref{eq:xiinfty}), (\ref{eq:zeroasym}) and
(\ref{eq:xizero}), we see that the integrals (\ref{eq:theta}) have
the following behavior at $z=\infty$ and $z=0$.
\begin{equation}\label{eq:asymtheta}
\begin{split}
\theta_1^N(z)&=-\log z+l_1^N+O\left(z^{-1}\right),\quad
z\rightarrow\infty,\\
\theta_1^N(z)&=-(1-c_N)\log z+O\left(1\right),\quad
z\rightarrow 0,\\
\theta_2^N(z)&=-z+c_N(1-\beta_N)\log
z+l_2^N+O\left(z^{-1}\right),\quad
z\rightarrow\infty,\\
\theta_2^N(z)&=O\left(1\right),\quad
z\rightarrow 0,\\
\theta_3^N(z)&=-\frac{z}{a}+c_N\beta_N\log
z+l_3^N+O\left(z^{-1}\right),\quad
z\rightarrow\infty,\\
\theta_3^N(z)&=O\left(1\right),\quad z\rightarrow 0.
\end{split}
\end{equation}
for some constants $l_1^N$, $l_2^N$ and $l_3^N$.

Then the set $\mathfrak{H}$ defined by
\begin{equation}\label{eq:frakH}
\mathfrak{H}=\left\{z\in\mathbb{C}|\quad
\mathrm{Re}\left(\theta_2^N(z)-\theta_3^N(z)\right)=0\right\}
\end{equation}
is important to the Riemann-Hilbert analysis. Let us now study its
properties. First note that it is symmetric across the real axis.
\begin{lemma}\label{le:symm}
Let the branch cut $\mathcal{C}$ between $\lambda_{3}^N$ and
$\lambda_4^N$ be a simple contour that is symmetric with respect to
the real axis. Then the set $\mathfrak{H}$ in (\ref{eq:frakH}) is
symmetric with respect to the real axis.
\end{lemma}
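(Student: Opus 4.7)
The plan is to exploit the real-coefficient nature of (\ref{eq:curveN}), which equips $\Lie_N$ with the anti-holomorphic involution $(z,\xi)\mapsto(\bar z,\bar\xi)$. The core idea is to upgrade this set-level symmetry to a sheet-wise reflection identity and then show that $\theta_2^N-\theta_3^N$ respects the reflection modulo a purely imaginary constant.

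The first step is to establish
\begin{equation*}
\xi_j^N(\bar z)=\overline{\xi_j^N(z)},\qquad j=2,3,
\end{equation*}
on the common domain of analyticity $\mathbb{C}\setminus(\mathcal{C}\cup[\lambda_1^N,\lambda_2^N])$. Indeed, (\ref{eq:xiinfty}) shows $\xi_2^N(z)\to-1$ and $\xi_3^N(z)\to-1/a$ as $z\to\infty$, both real, so each $\xi_j^N$ is real on a real neighborhood of infinity and the Schwarz reflection principle gives the identity locally; the symmetry of the entire cut locus (the interval $[\lambda_1^N,\lambda_2^N]$ is real, and $\mathcal{C}$ is symmetric by hypothesis) then propagates the identity to the whole cut plane by analytic continuation.

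Set $\phi(z):=\theta_2^N(z)-\theta_3^N(z)$ and define the holomorphic function
\begin{equation*}
g(z):=\overline{\phi(\bar z)}-\phi(z).
\end{equation*}
Its derivative $g'(z)=\overline{(\xi_2^N-\xi_3^N)(\bar z)}-(\xi_2^N-\xi_3^N)(z)$ vanishes by the reflection identity, so $g$ is constant, and evaluating at $z=\lambda_3^N$ gives $g\equiv\overline{\phi(\lambda_4^N)}$, since $\phi(\lambda_3^N)=0$ by (\ref{eq:theta}) and $\overline{\lambda_3^N}=\lambda_4^N$. It remains to check that $\phi(\lambda_4^N)\in i\mathbb{R}$. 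Choose a path $\gamma$ from $\lambda_3^N$ to $\lambda_4^N$ that is symmetric under $z\mapsto\bar z$, stays on one side of $\mathcal{C}$, and crosses the real axis at a point $x_0\notin[\lambda_1^N,\lambda_2^N]$ allowed by the path constraints of (\ref{eq:theta}) (in $(\lambda_2^N,\infty)$ when $x^\ast<\lambda_2^N$, and in $(-\infty,\lambda_1^N)$ otherwise). Parametrize it by $x:[-1,1]\to\mathbb{C}$ with $x(-1)=\lambda_3^N$, $x(1)=\lambda_4^N$ and $x(-t)=\overline{x(t)}$; splitting $\int_\gamma(\xi_2^N-\xi_3^N)\,dx$ at $t=0$ and substituting $t=-s$ on the lower half, the reflection identity together with $x'(-t)=-\overline{x'(t)}$ converts it into $-\overline{J}$, where $J$ is the upper half, so $\phi(\lambda_4^N)=J-\overline{J}=2i\,\mathrm{Im}\,J\in i\mathbb{R}$. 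Taking real parts in $g\in i\mathbb{R}$ then gives $\mathrm{Re}\,\phi(\bar z)=\mathrm{Re}\,\phi(z)$, which is the symmetry of $\mathfrak{H}$.

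The main obstacle is the first step: reality of the coefficients of (\ref{eq:curveN}) alone only tells us that conjugation permutes the three sheets as a set, so one must separately rule out that $\xi_2^N$ and $\xi_3^N$ are swapped by the involution rather than each preserved. This is forced near $\infty$ by the distinct real asymptotic values $-1$ and $-1/a$, and then propagates globally precisely because the full cut locus is real-symmetric; keeping the symmetric path $\gamma$ on a single side of $\mathcal{C}$ is harmless because each side of $\mathcal{C}$ is itself symmetric under conjugation.
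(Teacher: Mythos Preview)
Your proof is correct and follows essentially the same route as the paper. Both arguments establish the sheet-wise reflection identity $\overline{\xi_j^N(\bar z)}=\xi_j^N(z)$ from the real coefficients of (\ref{eq:curveN}) together with the distinct real limits in (\ref{eq:xiinfty}), and both reduce the symmetry of $\mathfrak{H}$ to showing that $\theta_2^N(\lambda_4^N)-\theta_3^N(\lambda_4^N)$ is purely imaginary via a reflection-symmetric path from $\lambda_3^N$ to $\lambda_4^N$; your packaging through the auxiliary function $g(z)=\overline{\phi(\bar z)}-\phi(z)$ is a slightly cleaner way to organise the same computation.
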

\begin{proof} Let us consider the transformation
$z\mapsto\overline{z}$ and
$\xi(z)\mapsto\overline{\xi}(\overline{z})$ in (\ref{eq:curveN}).
This gives
\begin{equation}\label{eq:curveNcon}
\begin{split}
\overline{z}a\overline{\xi}(\overline{z})^3&+(A_2\overline{z}+B_2^N)\overline{\xi}(\overline{z})^2+(\overline{z}+B_1^N)\overline{\xi}(\overline{z})+1=0.
\end{split}
\end{equation}
This means that if $\xi(z)$ is a solution to (\ref{eq:curveN}), then
so is $\overline{\xi}(\overline{z})$. Since the functions
$\overline{\xi}_j^N(\overline{z})$ is analytic away from
$\mathcal{C}\cup[\lambda_1^N,\lambda_2^N]$, it must equal either of
the functions $\xi_1^N(z)$, $\xi_2^N(z)$ or $\xi_3^N(z)$. By
considering the behavior of $\overline{\xi}_j^N(\overline{z})$ near
$z=\infty$ using (\ref{eq:xiinfty}), we see that
$\overline{\xi}_j^N(\overline{z})=\xi_j^N(z)$ for $j=1,2,3$. Let
$z_0\in\mathfrak{H}$, then we have
\begin{equation}\label{eq:zoH}
\mathrm{Re}\left(\int_{\lambda^N_3}^{z_0}(\xi_2^N(x)-\xi_3^N(x))dx\right)=0.
\end{equation}
By taking the complex conjugation of (\ref{eq:zoH}) and making use
of the fact that $\overline{\xi}_j^N(\overline{z})=\xi_j^N(z)$ for
$j=2,3$, we obtain
\begin{equation}
\mathrm{Re}\left(\int_{\lambda^N_4}^{\overline{z}_0}(\xi_2^N(x)-\xi_3^N(x))dx\right)=0.
\end{equation}
Let us now show that
$\mathrm{Re}\left(\theta_2^N(\lambda_4^N)-\theta_3^N(\lambda_4^N)\right)=0$.
Consider an integration contour $\Gamma$ from $\lambda^N_3$ to
$\lambda_4^N$ that is symmetric with respect to $\mathbb{R}$. Then
we have
\begin{equation*}
\begin{split}
\overline{\mathrm{Re}\left(\int_{\lambda_3^N}^{\lambda_4^N}\left(\xi_2^N(x)-\xi_3^N(x)\right)dx\right)}&=
\mathrm{Re}\left(\int_{\lambda_4^N}^{\lambda_3^N}\left(\xi_2^N(x)-\xi_3^N(x)\right)dx\right)\\
&=-\mathrm{Re}\left(\int_{\lambda_3^N}^{\lambda_4^N}\left(\xi_2^N(x)-\xi_3^N(x)\right)dx\right),
\end{split}
\end{equation*}
where we have used $\lambda_3^N=\overline{\lambda}_4^N$ in the
above. Hence we have
$\mathrm{Re}\left(\theta_2^N(\lambda_4^N)-\theta_3^N(\lambda_4^N)\right)=0$.
This implies the following
\begin{equation}
\mathrm{Re}\left(\int_{\lambda_4^N}^{\overline{z}_0}(\xi_2^N(x)-\xi_3^N(x))dx\right)=
\mathrm{Re}\left(\int_{\lambda_3^N}^{\overline{z}_0}(\xi_2^N(x)-\xi_3^N(x))dx\right)=0.
\end{equation}
Therefore if $z_0\in\mathfrak{H}$, then $\overline{z}_0$ is also in
$\mathfrak{H}$ and hence $\mathfrak{H}$ is symmetric with respect to
the real axis.
\end{proof}
We will now show that the set $\mathfrak{H}$ is independent on the
choice of the branch cut $\mathcal{C}$.
\begin{lemma}\label{le:indep}
Let the branch cut $\mathcal{C}$ be symmetric with respect to the
real axis. Then the set $\mathfrak{H}$ is independent on the choice
of the branch cut $\mathcal{C}$.
\end{lemma}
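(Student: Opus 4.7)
The plan is to compare the two difference functions $\theta_2^N - \theta_3^N$ associated with two admissible branch cuts $\mathcal{C}$ and $\mathcal{C}'$, by splitting the plane into regions where the branches either coincide or get interchanged. First I would reduce to the case where $\mathcal{C}$ and $\mathcal{C}'$ meet only at their common endpoints $\lambda_3^N, \lambda_4^N$: any two symmetric simple contours can be joined by a chain of such pairs (inserting a third contour disjoint from both if they cross), so one may assume $\mathcal{C} \cap \mathcal{C}' = \{\lambda_3^N, \lambda_4^N\}$. Together $\mathcal{C}$ and $\mathcal{C}'$ then enclose a bounded symmetric region $R$ whose complement $U$ contains $\infty$. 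Since $\xi_1^N$ has branch cut $[\lambda_1^N, \lambda_2^N]$, which does not depend on $\mathcal{C}$, the first branch is unchanged when the cut is modified; the remaining two branches, pinned down by the asymptotics (\ref{eq:xiinfty}), then agree on $U$ and get interchanged on $R$, because a single crossing of $\mathcal{C}$ or $\mathcal{C}'$ swaps them.

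For a test point $z \in U$, I would choose a common integration path from $\lambda_3^N$ to $z$ lying entirely in $U$; on such a path the integrand $\xi_2^N - \xi_3^N$ is the same function for both branch-cut choices, so the two versions of $\theta_2^N(z) - \theta_3^N(z)$ literally coincide and hence have the same real part. For $z \in R$, I would choose a path $\gamma$ entirely in $R$, which automatically avoids both $\mathcal{C}$ and $\mathcal{C}'$; on $\gamma$ the difference $\xi_2^N - \xi_3^N$ computed with $\mathcal{C}$ is the negative of that computed with $\mathcal{C}'$, because the labels of the two branches are swapped in $R$. Consequently the two versions of $\theta_2^N(z) - \theta_3^N(z)$ are negatives of one another and their real parts vanish simultaneously. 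In both cases the defining equation $\mathrm{Re}(\theta_2^N - \theta_3^N) = 0$ cuts out the same subset, so $\mathfrak{H}$ is independent of $\mathcal{C}$ on $U \cup R$, and equality extends to $\mathcal{C} \cup \mathcal{C}'$ by continuity.

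The main obstacle I anticipate is verifying that this region-by-region comparison is compatible with the full path restriction in (\ref{eq:theta}), which forbids not only $\mathcal{C}$ but also either $(-\infty, \lambda_2^N)$ or $(\lambda_1^N, \infty)$ depending on where $x^\ast$ lies; this real-axis portion of the restriction may itself change when $\mathcal{C}$ is replaced by $\mathcal{C}'$. To remove this ambiguity I would first show that $\mathrm{Re}(\theta_2^N(z) - \theta_3^N(z))$ is independent of the choice of admissible path: the one-form $(\xi_2^N - \xi_3^N)\,dz$ is holomorphic on $\mathbb{C} \setminus (\mathcal{C} \cup [\lambda_1^N, \lambda_2^N])$, and both of its independent periods are purely imaginary. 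The period around $\mathcal{C}$ is purely imaginary by the symmetry argument used in the proof of Lemma \ref{le:symm}, while the period around $[\lambda_1^N, \lambda_2^N]$ is purely imaginary because on that interval one of $\xi_2^N, \xi_3^N$ takes complex conjugate boundary values (swapping with the Stieltjes transform $\xi_1^N$) while the third branch stays real, forcing the jump of $\xi_2^N - \xi_3^N$ to be purely imaginary. Once the real part is known to be path-independent, the comparison between the two choices of branch cut is unambiguous and the lemma follows.
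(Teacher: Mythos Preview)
Your proposal is correct and rests on the same core observation as the paper: crossing the cut $\mathcal{C}$ swaps $\xi_2^N$ and $\xi_3^N$, so $\theta_2^N-\theta_3^N$ is globally determined only up to sign, and hence the zero set of its real part is independent of where $\mathcal{C}$ is placed.

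The organization, however, differs. The paper argues intrinsically and restricts first to the upper half plane: there one shows directly that $(\theta_2^N-\theta_3^N)_+=-(\theta_2^N-\theta_3^N)_-$ along $\mathcal{C}$, which forces the zero set of the real part to be the same for any admissible $\mathcal{C}$; the lower half plane is then handled in one stroke by invoking Lemma~\ref{le:symm}. This upper-half-plane reduction sidesteps exactly the obstacle you flagged---the real-axis portion of the path restriction in (\ref{eq:theta}) and the associated periods around $[\lambda_1^N,\lambda_2^N]$---because in $\mathbb{C}^+$ the only relevant monodromy is the sign flip across $\mathcal{C}_+$. Your route instead compares two specific cuts $\mathcal{C},\mathcal{C}'$ on the whole plane, which is perfectly valid but forces you to verify separately that all periods of $(\xi_2^N-\xi_3^N)\,dz$ are purely imaginary. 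That verification is correct (the residues at $\infty$ give $2\pi i c_N(1-\beta_N)$ and $2\pi i c_N\beta_N$, and the $\mathcal{C}$-period is imaginary by the symmetry argument you cite), but the paper's reduction to $\mathbb{C}^+$ plus symmetry is the shorter path.
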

\begin{proof} Let us consider the boundary values of $\theta_2^N(z)$
and $\theta_3^N(z)$ along the branch cut $\mathcal{C}$. Let
$\xi_{2,\pm}^N(z)$ and $\xi_{3,\pm}^N(z)$ be the boundary values of
$\xi_2^N(z)$ and $\xi_3^N(z)$ on the left and right hand sides of
$\mathcal{C}$, then we have $\xi_{2,\pm}^N(z)=\xi_{3,\mp}^N(z)$ for
$z\in\mathcal{C}$. Therefore, for $z$ in the upper half plane, we
have the following
\begin{equation*}
\int_{\lambda_3^N}^{z}(\xi_{2,+}^N(x)-\xi_{3,+}^N(x))dx=-\int_{\lambda_3^N}^{z}(\xi_{2,-}^N(x)-\xi_{3,-}^N(x))dx
\end{equation*}
where the integration is performed along $\mathcal{C}$. Therefore in
the upper half plane, the function $\theta_2^N(z)-\theta_3^N(z)$
changes sign across the branch cut $\mathcal{C}$. Hence in the upper
half plane, the zero set of
$\mathrm{Re}\left(\theta_2^N(z)-\theta_3^N(z)\right)$ is independent
on the choice of $\mathcal{C}$. Now by Lemma \ref{le:symm} the set
$\mathfrak{H}$ is symmetric with respect to the real axis for any
choice of symmetric branch cut $\mathcal{C}$. Therefore the set
$\mathfrak{H}$ in the lower half plane is just the reflection of
$\mathfrak{H}$ in the upper half plane which must also be
independent on the choice of $\mathcal{C}$.
\end{proof}
We will now show that the real parts of $\xi_2^N(z)$ and
$\xi_3^N(z)$ will coincide exactly once on the real axis.
\begin{lemma}\label{le:real23}
Let $x^{\ast}$ be the intersection between $\mathcal{C}$ and
$\mathbb{R}$, then the real function
$\mathrm{Re}\left(\xi_2^N(z)-\xi_3^N(z)\right)$ is continuous on
$(-\infty,x^{\ast})$ and $(x^{\ast},\infty)$ and it vanishes exactly
once at a point $\iota\in\mathbb{R}\setminus\{x^{\ast}\}$.
\end{lemma}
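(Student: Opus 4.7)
The plan is to first establish continuity of $\mathrm{Re}(\xi_2^N-\xi_3^N)$ on $\mathbb{R}\setminus\{x^{\ast}\}$ and localize any zero to $(\lambda_1^N,\lambda_2^N)$. The branches $\xi_2^N,\xi_3^N$ can have real discontinuities only on $\mathcal{C}$ (meeting $\mathbb{R}$ solely at $x^{\ast}$) and on $[\lambda_1^N,\lambda_2^N]$. The analysis of $z(\xi)$ in Lemma \ref{le:image} together with the asymptotics (\ref{eq:xiinfty}) shows that exactly one of $\xi_2^N,\xi_3^N$ merges with $\xi_1^N$ at both $\lambda_{1,2}^N$ and thus shares the real cut with $\xi_1^N$; call this branch $\xi^N_{\mathrm{p}}$ and the other $\xi^N_{\mathrm{r}}$. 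The Schwarz reflection $\overline{\xi_j^N(\bar z)}=\xi_j^N(z)$ together with the jump $\xi^N_{\mathrm{p},\pm}=\xi^N_{1,\mp}$ gives $\xi^N_{\mathrm{p},\pm}=\overline{\xi^N_{1,\pm}}$, so the real parts of the two boundary values of $\xi^N_{\mathrm{p}}$ coincide, while $\xi^N_{\mathrm{r}}$ is analytic across; hence $\mathrm{Re}(\xi_2^N-\xi_3^N)$ is continuous on $\mathbb{R}\setminus\{x^{\ast}\}$, including at the endpoints $\lambda_{1,2}^N$ where the merging branches take the real value $\gamma_{1,2}^N$. Outside $[\lambda_1^N,\lambda_2^N]$ and $\{x^{\ast}\}$, $D_3(z)>0$, so $\xi_2^N\neq\xi_3^N$ and the real difference is nonzero; any zero must therefore lie in $(\lambda_1^N,\lambda_2^N)$.

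On $(\lambda_1^N,\lambda_2^N)$, $\xi^N_{\mathrm{r}}$ equals the unique real root $r_0(z)$ of (\ref{eq:curveN}), while $\mathrm{Re}(\xi^N_{\mathrm{p},+})=\mathrm{Re}(\xi^N_{1,+})$. Combining with Vieta's identity $\xi_1^N+\xi_2^N+\xi_3^N=-\tfrac{A_2z+B_2^N}{az}$ gives $2\,\mathrm{Re}(\xi^N_{1,+})+r_0(z)=-\tfrac{A_2z+B_2^N}{az}$, whence a short computation yields
\begin{equation*}
\mathrm{Re}\!\bigl(\xi_2^N(z)-\xi_3^N(z)\bigr)=\pm\tfrac{3}{2}\bigl(\phi(z)-r_0(z)\bigr),\qquad \phi(z):=-\frac{A_2z+B_2^N}{3az},
\end{equation*}
with the sign depending on whether $\xi_2^N$ or $\xi_3^N$ plays the role of $\xi^N_{\mathrm{p}}$. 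The zero condition thus reduces to $r_0(z)=\phi(z)$.

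The main task is then to count solutions of $r_0=\phi$ on $(\lambda_1^N,\lambda_2^N)$. Directly $\phi'(z)=\tfrac{1-c_N}{3z^2}>0$ on this positive interval, so $\phi$ is strictly increasing. From the analysis of $z(\xi)$ surrounding Lemma \ref{le:image}, the real root $r_0(z)$ on $(\lambda_1^N,\lambda_2^N)$ is the preimage in the $\xi$-interval $(s_1,s_2)$ (where $\{s_1,s_2\}=\{-1,-1/a\}$, $s_1<s_2$), on which $z'(\xi)<0$ throughout; hence $r_0(z)$ is strictly decreasing in $z$, so $r_0-\phi$ is strictly decreasing and has at most one zero. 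For existence I evaluate at the endpoints: $\mathrm{Re}(\xi_2^N-\xi_3^N)(\lambda_k^{N\pm})=\pm(\gamma_k^N-r_0(\lambda_k^N))$. Since $\{-1,-1/a\}\subset(\gamma_1^N,\gamma_2^N)$ (by the proof of Lemma \ref{le:1cut}) and $r_0(\lambda_k^N)\in(s_1,s_2)$, one has the strict ordering $\gamma_1^N<s_1<r_0(\lambda_k^N)<s_2<\gamma_2^N$, forcing $\gamma_1^N-r_0(\lambda_1^N)<0$ and $\gamma_2^N-r_0(\lambda_2^N)>0$ to be strictly of opposite sign. The Intermediate Value Theorem on $[\lambda_1^N,\lambda_2^N]$ then produces at least one zero, and monotonicity of $r_0-\phi$ makes it unique. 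The point $\iota$ is determined algebraically by $r_0(\iota)=\phi(\iota)$ independently of the choice of $\mathcal{C}$, so a generic $\mathcal{C}$ (avoiding $x^{\ast}=\iota$) gives $\iota\in\mathbb{R}\setminus\{x^{\ast}\}$. The principal obstacle is locating $r_0$ on the $\xi$-interval $(s_1,s_2)$ so as to pin down both the endpoint signs and the monotonicity; once this follows from Lemma \ref{le:image} and the ordering $\gamma_1^N<s_1<s_2<\gamma_2^N$, the rest is routine.
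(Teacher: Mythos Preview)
Your argument is correct and follows the same overall strategy as the paper: both use Vieta's identity $2\,\mathrm{Re}(\xi_I^N)+\xi_R^N=-(A_2z+B_2^N)/(az)$ on $(\lambda_1^N,\lambda_2^N)$ to reduce uniqueness to strict monotonicity of $\mathrm{Re}(\xi_I^N-\xi_R^N)$, and both obtain existence from a sign change across the interval. The execution differs slightly. The paper differentiates the Vieta identity and argues by contradiction through Lemma~\ref{le:CS}: if $\tfrac{d}{dz}\mathrm{Re}(\xi_I^N-\xi_R^N)\le 0$ somewhere, then $\xi_R^{N\prime}>0$, so $z'(\xi_R^N)>0$, forcing $z\notin\mathrm{Supp}(\hat F_N)$. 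You instead locate the real root $r_0(z)$ explicitly on the $\xi$-interval $(s_1,s_2)\subset(\gamma_1^N,\gamma_2^N)$ (as the unique real preimage of $z\in(\lambda_1^N,\lambda_2^N)$, which follows from the image computations around Lemma~\ref{le:image} and Lemma~\ref{le:1cut}) and use that $z'<0$ on $(\gamma_1^N,\gamma_2^N)$ to get $r_0$ strictly decreasing directly; combined with the elementary $\phi'>0$ this yields the same monotonicity without appealing to Lemma~\ref{le:CS}. Your endpoint sign argument via $\gamma_1^N<s_1<r_0(\lambda_k^N)<s_2<\gamma_2^N$ is likewise more direct than the paper's tracking of the ordering of $\xi_{l_1}^N,\xi_{l_2}^N$ from $\pm\infty$.

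One small inaccuracy that does not affect the proof: it is not in general true that a \emph{single} branch among $\xi_2^N,\xi_3^N$ merges with $\xi_1^N$ at both endpoints $\lambda_{1}^N$ and $\lambda_2^N$; under the eventual choice $x^{\ast}\in(\lambda_1^N,\lambda_2^N)$ (see Proposition~\ref{pro:sheet}) the merging branches differ. What your argument actually uses is the local decomposition $\{\xi_2^N,\xi_3^N\}=\{\xi^N_{\mathrm{p}},\xi^N_{\mathrm{r}}\}$ on each component of $(\lambda_1^N,\lambda_2^N)\setminus\{x^{\ast}\}$, and that is valid regardless.
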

\begin{proof} The location
of $x^{\ast}$ is immaterial as the function
$\mathrm{Re}\left(\xi_2^N(z)-\xi_3^N(z)\right)$ only changes sign
across the point $x^{\ast}$ and hence its zeros on $\mathbb{R}$ are
independent on the location of $x^{\ast}$. For definiteness, let us
assume that $x^{\ast}<\lambda_1^N$.

Let $l_1=2$, $l_2=3$ for $a>1$ and $l_1=3$, $l_2=2$ for $a<1$. From
the behavior (\ref{eq:xiinfty}) of $\xi_j^N(z)$ near $z=\pm\infty$,
we see that there exists $R>0$ such that $\xi_1^N(\pm
R)>\xi_{l_2}^N(\pm R)>\xi_{l_1}^N(\pm R)$. Note that
$\xi_{l_1}^N(z)$ and $\xi_{l_2}^N(z)$ are continuous on
$(-\infty,x^{\ast})\cup(\lambda_2^N,\infty)$. Since these intervals
do not contain any branch point of (\ref{eq:curveN}) and both
$\xi_{l_1}^N(z)$ and $\xi_{l_2}^N(z)$ are real on them,
$\mathrm{Re}\left(\xi_{l_1}^N(z)\right)$ and
$\mathrm{Re}\left(\xi_{l_2}^N(z)\right)$ cannot coincide on these
intervals. In particular, the order $\xi_{l_2}^N(z)>\xi_{l_1}^N(z)$
must be preserved in $(-\infty,x^{\ast})\cup(\lambda_2^N,\infty)$.
Since $\xi_{l_1}^N(z)$ and $\xi_{l_2}^N(z)$ interchange across the
branch cut $\mathcal{C}$, we must have
$\xi_{l_1}^N(z)>\xi_{l_2}^N(z)$ in the interval
$(x^{\ast},\lambda_1^N)$. This means that
$\xi_{l_1}^N(z)>\xi_{l_2}^N(z)$ on the left hand side of
$[\lambda_1^N,\lambda_2^N]$, while $\xi_{l_1}^N(z)<\xi_{l_2}^N(z)$
on the right hand side of $[\lambda_1^N,\lambda_2^N]$ and hence
$\mathrm{Re}\left(\xi_2^N(z)\right)$ and
$\mathrm{Re}\left(\xi_3^N(z)\right)$ must coincide at least once in
$[\lambda_1^N,\lambda_2^N]$. We will show that they can only
coincide once within $[\lambda_1^N,\lambda_2^N]$.

Inside $[\lambda_1^N,\lambda_2^N]$, the function $\xi_1^N(z)$ and
one other root $\xi_I^N(z)$ becomes complex and are conjugate to
each other, while another root $\xi_R^N(z)$ remains real. Note that,
since $\xi_{I,\pm}^N(z)=\xi_{1,\mp}^N(z)$ on the branch cut
$[\lambda_1^N,\lambda_2^N]$, we see that the real part of
$\xi_I^N(z)$ and $\xi_1^N(z)$ is continuous across
$[\lambda_1^N,\lambda_2^N]$, while $\xi_R^N(z)$ has no jump
discontinuity across $[\lambda_1^N,\lambda_2^N]$. Since neither
$\xi_I^N(z)$ or $\xi_R^N(z)$ are equal to the branch $\xi_1^N(z)$ in
$(\lambda_1^N,\lambda_2^N)$, the real function $\mathrm{Re}\left(
\xi_I^N(z)-\xi_R^N(z)\right)$ must equal either of
$\pm\mathrm{Re}\left( \xi_2^N(z)-\xi_3^N(z)\right)$ and hence the
real function $\mathrm{Re}\left( \xi_2^N(z)-\xi_3^N(z)\right)$ does
not have jump discontinuity on $[\lambda_1^N,\lambda_2^N]$.
Therefore for $z\in\mathbb{R}$, this real function can only have
jump discontinuity at the point $x^{\ast}$.

From the coefficient of $\xi^2$ in (\ref{eq:curveN}), we see that
\begin{equation}\label{eq:coef}
2\mathrm{Re}\left(\xi_I^N(z)\right)+\xi_R^N(z)=-\frac{1}{a}\left(A_2+\frac{B_2^N}{z}\right).
\end{equation}
Taking the derivative with respect to $z$, we obtain
\begin{equation*}
2\frac{d}{dz}\mathrm{Re}\left(
\xi_I^N(z)-\xi_R^N(z)\right)+3\frac{d\xi_R^N(z)}{dz}=\frac{1}{a}\frac{B_2^N}{z^2}.
\end{equation*}
This implies
\begin{equation*}
3\frac{d\xi_R^N(z)}{dz}=\frac{1}{a}\frac{B_2^N}{z^2}-2\frac{d}{dz}\mathrm{Re}\left(
\xi_I^N(z)-\xi_R(z)\right).
\end{equation*}
From (\ref{eq:curveN}), it is easy to see that $B_2^N>0$ as $c_N<1$.
Hence if the derivative of $\mathrm{Re}\left(
\xi_I^N(z)-\xi_R(z)\right)$ is non-positive at a point
$z_0\in[\lambda_1^N,\lambda_2^N]$, then we will have
$\frac{d\xi_R(z_0)}{dz}>0$. Since $\xi_R(z)$ is real, this would
imply the derivative $\frac{d z(\xi)}{d\xi}$ of the function $z$
defined by (\ref{eq:mN}) is positive at the real point
$m=\xi_R(z_0)$. By Lemma \ref{le:CS}, the point $z_0=z(m)$ cannot
belong to
$\mathrm{Supp}\left(\hat{F}_N\right)=[\lambda_1^N,\lambda_2^N]$.
This leads to a contradiction and hence we must have
\begin{equation}\label{eq:posde}
\frac{d}{dz}\mathrm{Re}\left( \xi_I^N(z)-\xi_R^N(z)\right)>0,\quad
z\in[\lambda_1^N,\lambda_2^N].
\end{equation}
In particular, if the function $\mathrm{Re}\left(
\xi_I^N(z)-\xi_R^N(z)\right)$ has more than one zero inside of
$[\lambda_1^N,\lambda_2^N]$, then at one of the zeros, the
derivative in (\ref{eq:posde}) must be smaller than or equal to
zero. This is a contradiction and hence the function
$\mathrm{Re}\left( \xi_I^N(z)-\xi_R^N(z)\right)$ can vanish at most
once inside $[\lambda_1^N,\lambda_2^N]$. This means that the
function $\mathrm{Re}\left( \xi_2^N(z)-\xi_3^N(z)\right)$ will also
vanish at most once inside $[\lambda_1^N,\lambda_2^N]$. Since we
have already shown that $\mathrm{Re}\left(
\xi_2^N(z)-\xi_3^N(z)\right)$ vanishes at least once inside this
interval, it must then vanish exactly once inside
$[\lambda_1^N,\lambda_2^N]$. This concludes the proof of the lemma.
\end{proof}
We can now determine the number of intersection points between
$\mathfrak{H}$ and $\mathbb{R}$.
\begin{lemma}\label{le:inter}
The set $\mathfrak{H}$ intersects $\mathbb{R}$ at most twice.
\end{lemma}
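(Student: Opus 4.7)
The plan is to count zeros of $h(x) := \mathrm{Re}(\theta_2^N(x) - \theta_3^N(x))$ on $\mathbb{R}$, since by the defining equation (\ref{eq:frakH}) these are precisely the points of $\mathfrak{H} \cap \mathbb{R}$. The function $h$ is continuous on $\mathbb{R} \setminus \{x^*\}$ but suffers a sign discontinuity at $x^*$, so a naive Rolle's theorem argument fails. The remedy is to ``symmetrize'' $h$ about $x^*$ to produce a $C^1$ function on all of $\mathbb{R}$ to which Rolle applies.

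The crucial input is the boundary identity already established inside the proof of Lemma \ref{le:indep}: since $\xi_{2,\pm}^N = \xi_{3,\mp}^N$ on $\mathcal{C}$, one has $(\theta_2^N - \theta_3^N)^+ = -(\theta_2^N - \theta_3^N)^-$ on $\mathcal{C}$. Specialising at the point $x^* \in \mathcal{C} \cap \mathbb{R}$ gives the sign flip $h(x^{*+}) = -h(x^{*-})$, and using the swap relation for $\xi_2^N - \xi_3^N$ directly yields $h'(x^{*+}) = -h'(x^{*-})$ with $h'(x) = \mathrm{Re}(\xi_2^N(x) - \xi_3^N(x))$. Away from $x^*$, both $h$ and $h'$ are continuous on $\mathbb{R}$, even through the branch points $\lambda_1^N, \lambda_2^N$, thanks to the square-root behavior of $\xi_2^N, \xi_3^N$ there.

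Now define
\[
H(x) := \begin{cases} h(x), & x \le x^*, \\ -h(x), & x \ge x^*. \end{cases}
\]
The two sign-flip identities ensure that $H$ and $H'$ both match from left and right at $x^*$, so $H \in C^1(\mathbb{R})$. Moreover $H' = \pm h'$ piecewise, and Lemma \ref{le:real23} forces $H'$ to have exactly one zero on $\mathbb{R}$, namely the unique point $\iota \in \mathbb{R} \setminus \{x^*\}$ where $h'$ vanishes (the value $H'(x^*) = h'(x^{*-}) \ne 0$ is nonzero because $\iota \ne x^*$). If $H$ had three or more zeros on $\mathbb{R}$, applying Rolle's theorem between two consecutive pairs would produce at least two zeros of $H'$, a contradiction. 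Hence $H$ has at most two real zeros; since $H$ and $h$ differ only by an overall sign on each of the two intervals, they share the same zero set on $\mathbb{R}$, proving $|\mathfrak{H} \cap \mathbb{R}| \le 2$.

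The main obstacle is pinning down the sign-flip relations $h(x^{*+}) = -h(x^{*-})$ and $h'(x^{*+}) = -h'(x^{*-})$ carefully; once these are extracted from the boundary-value computation already present in Lemma \ref{le:indep}, the symmetrization and Rolle step are routine.
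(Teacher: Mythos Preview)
Your argument is correct and is a genuinely different route from the paper's proof. The paper never constructs a globally $C^1$ function; instead it repeatedly exploits Lemma~\ref{le:indep} to relocate the cut point $x^{\ast}$ on either side of a hypothetical intersection point $s_0$, and then argues by direct monotonicity of $\mathrm{Re}\!\int_{s_0}^{z}(\xi_2^N-\xi_3^N)\,dx$ on the resulting cut-free half-lines. Your symmetrisation $H(x)=\pm h(x)$ packages the same information---the single sign change of $h'=\mathrm{Re}(\xi_2^N-\xi_3^N)$ from Lemma~\ref{le:real23} and the sign flip of $\theta_2^N-\theta_3^N$ across $\mathcal{C}$ from Lemma~\ref{le:indep}---into a single Rolle-type count, which is cleaner and avoids the case analysis ($s_0<\iota$, $s_0>\iota$, $s_0=\iota$) that the paper carries out. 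The paper's approach, on the other hand, makes the role of the freedom in choosing $\mathcal{C}$ more transparent.

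One small point worth tightening: your claim $H'(x^{\ast})=h'(x^{\ast-})\neq 0$ is justified by ``$\iota\neq x^{\ast}$'', but Lemma~\ref{le:real23} as stated only locates the zero of $h'$ on the open set $\mathbb{R}\setminus\{x^{\ast}\}$ and does not literally rule out the one-sided limit vanishing. This is harmless: either invoke Lemma~\ref{le:indep} to pick $x^{\ast}\neq\iota$ (indeed $x^{\ast}\notin[\lambda_1^N,\lambda_2^N]$ suffices, since there $\xi_2^N-\xi_3^N$ is real and nonzero), or note from the proof of Lemma~\ref{le:real23} that $\mathrm{Re}(\xi_I^N-\xi_R^N)$ is strictly monotone on $[\lambda_1^N,\lambda_2^N]$, so its boundary values at $x^{\ast}$ are nonzero whenever $x^{\ast}\neq\iota$.
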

\begin{proof} By Lemma \ref{le:real23}, there exists exactly one point
$\iota\in[\lambda_1^N,\lambda_2^N]$ such that
$\mathrm{Re}\left(\xi_2^N(\iota)-\xi_3^N(\iota)\right)=0$. Let us
assume that $\mathfrak{H}$ and $\mathbb{R}$ intersects at a point
$s_0<\iota$. The case when $s_0>\iota$ can be treated similarly. Let
us also choose $\mathcal{C}$ such that $\mathcal{C}$ intersects
$\mathbb{R}$ at $x^{\ast}<s_0$. As both $\mathfrak{H}$ and $\iota$
are independent on the choice of $\mathcal{C}$, a different choice
will not affect the intersection between $\mathfrak{H}$ and
$\mathbb{R}$. Since $x^{\ast}<s_0$, the function
$\mathrm{Re}\left(\xi_2^N-\xi_3^N\right)$ is continuous and has
different signs on $(s_0,\iota)$ and $(\iota,\infty)$. However,
within these two intervals, the sign of
$\mathrm{Re}\left(\xi_2^N-\xi_3^N\right)$ remains unchanged. Hence
there is at most one point $z\in(s_0,\infty)$ such that
$\mathrm{Re}\left(\int_{s_0}^{z}\xi_{2}^N-\xi_{3}^Ndx\right)=0$,
where the integration path is taken along $\mathbb{R}$. Since
$\mathrm{Re}\left(\int_{\lambda_3^N}^{s_0}\xi_{2}^N-\xi_{3}^Ndx\right)=0$
and that $\mathfrak{H}$ is symmetric with respect to the real axis,
we see that there is at most one point on $(s_0,\infty)$ that
belongs to $\mathfrak{H}$.

Let us now consider the possible intersection points on
$(-\infty,s_0)$. As the choice of the branch cut $\mathcal{C}$ does
not affect the set $\mathfrak{H}$ and its intersection with the real
axis, let us choose $\mathcal{C}$ so that it intersects $\mathbb{R}$
at a point $\tilde{x}_0>s_0$ instead. Then
$\mathrm{Re}\left(\xi_2^N-\xi_3^N\right)$ is continuous on
$(-\infty,s_0)$. Moreover, it does not change sign in
$(-\infty,s_0)$. Therefore the function
$\mathrm{Re}\left(\int_{\lambda_3^N}^{z}\xi_{2}^N-\xi_{3}^Ndx\right)$
does not vanish in $(-\infty,s_0)$ and hence $\mathfrak{H}$ does not
intersect $(-\infty,s_0)$. This shows that if $\mathfrak{H}$
intersects $\mathbb{R}$ at a point $s_0<\iota$, then there can at
most be 2 intersection points between $\mathfrak{H}$ and
$\mathbb{R}$. By using similar argument, one can show the same for
the case when $s_0>\iota$.

Let us now consider the case when $s_0=\iota$. If $s_0=\iota$, then
by choosing $\mathcal{C}$ such that $x^{\ast}<\iota$
($x^{\ast}>\iota$), we see that
$\mathrm{Re}\left(\xi_2^N-\xi_3^N\right)$ does not change sign in
$(s_0,\infty)$ ($(-\infty,s_0)$) and hence
$\mathrm{Re}\left(\int_{\lambda_3^N}^{z}\xi_{2}^N-\xi_{3}^Ndx\right)$
does not vanish in either of these intervals. Therefore
$\mathfrak{H}$ can only intersect $\mathbb{R}$ at the point
$s_0=\iota$. In any case, the set $\mathfrak{H}$ can intersect
$\mathbb{R}$ at 2 points at most.
\end{proof}
We can now determine the shape of the set $\mathfrak{H}$.
\begin{proposition}\label{pro:shapeH}
The set $\mathfrak{H}$ consists of 4 simple curves,
$\mathfrak{H}_{\infty}^{\pm}$, $\mathfrak{H}_L$ and
$\mathfrak{H}_R$. The curve $\mathfrak{H}_{\infty}^{+}$
($\mathfrak{H}_{\infty}^-$) is an open smooth curve that go from
$\lambda_{3}^N$ ($\lambda_4^N$) to infinity. They approach infinity
in a direction parallel to the imaginary axis and do not intersect
the real axis. The curves $\mathfrak{H}_L$ and $\mathfrak{H}_R$ are
simple curves joining $\lambda_3^N$ and $\lambda_4^N$. The curve
$\mathfrak{H}_L$ is in the left hand side of $\mathfrak{H}_R$ in the
complex plane and each of these curves intersects the real axis
once. These two curves are smooth except at their intersections with
real axis. Let $x_L$ and $x_R$ be the intersection points of
$\mathfrak{H}_L$ and $\mathfrak{H}_R$ with $\mathbb{R}$, then
$(x_L,x_R)\cap[\lambda_1^N,\lambda_2^N]\neq\emptyset$.
\end{proposition}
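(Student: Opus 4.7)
The plan is to analyse $\mathfrak{H}$ as the zero set of the harmonic function $g(z)=\mathrm{Re}(\theta_2^N(z)-\theta_3^N(z))$ by assembling local information at the branch points, asymptotic information at infinity, and the global constraints already proved in Lemmas \ref{le:symm}, \ref{le:indep}, \ref{le:real23}, and \ref{le:inter}. Near each of $\lambda_3^N$ and $\lambda_4^N$ the sheets $\xi_2^N,\xi_3^N$ merge at a simple branch point, so $\xi_2^N-\xi_3^N \sim c(z-\lambda_3^N)^{1/2}$ and consequently $\theta_2^N(z)-\theta_3^N(z)=\tfrac{2c}{3}(z-\lambda_3^N)^{3/2}+O((z-\lambda_3^N)^{5/2})$. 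The zero set of the real part of a $\tfrac{3}{2}$-power is exactly three smooth arcs meeting at angles $2\pi/3$, so three arcs of $\mathfrak{H}$ emerge from each of $\lambda_3^N,\lambda_4^N$. From (\ref{eq:asymtheta}) one reads $\theta_2^N(z)-\theta_3^N(z)=-(1-a^{-1})z+c_N(1-2\beta_N)\log z+O(1)$ as $z\to\infty$, and because the leading term has a nonzero real coefficient, the equation $\mathrm{Re}(\theta_2^N-\theta_3^N)=0$ forces $\mathrm{Re}(z)=O(\log|z|)$; hence $\mathfrak{H}$ possesses exactly one unbounded branch in $\mathbb{C}^+$ and one in $\mathbb{C}^-$, each approaching infinity in a direction parallel to the imaginary axis.

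Away from $\lambda_3^N,\lambda_4^N$ the complex derivative $(\theta_2^N-\theta_3^N)'=\xi_2^N-\xi_3^N$ is nonzero, since these two sheets coincide only at their own branch points (at $\lambda_1^N,\lambda_2^N$ it is $\xi_1^N$ that merges with one of $\xi_2^N,\xi_3^N$, not $\xi_2^N$ with $\xi_3^N$). Therefore $\mathfrak{H}$ is a smooth real-analytic $1$-manifold outside the two critical points, and combined with the local three-pronged structure and the two ends at infinity, $\mathfrak{H}$ is a planar graph on the sphere whose vertex set is $\{\lambda_3^N,\lambda_4^N,\infty\}$ with total degree $3+3+2=8$, hence with $4$ edges. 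By Lemma \ref{le:symm} the unbounded branch in $\mathbb{C}^+$ emanates from $\lambda_3^N$ and its reflection from $\lambda_4^N$, giving the arcs $\mathfrak{H}_\infty^{\pm}$; the two remaining rays at $\lambda_3^N$ must then either (a) both terminate at $\lambda_4^N$ (giving the desired $\mathfrak{H}_L,\mathfrak{H}_R$), or (b) close up into a self-loop at $\lambda_3^N$ with a mirror self-loop at $\lambda_4^N$. To exclude (b), suppose first that the putative self-loop lies in $\overline{\mathbb{C}^+}$; then it bounds a region $\Omega\subset\mathbb{C}^+$, and by Lemma \ref{le:indep} we may choose $\mathcal{C}$ to avoid $\Omega$, so that $g$ is harmonic in $\Omega$, continuous on $\overline{\Omega}$, and vanishes on $\partial\Omega\subset\mathfrak{H}$; the maximum principle forces $g\equiv 0$ on $\Omega$, contradicting the nowhere-density of $\mathfrak{H}$. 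If instead the loop crosses $\mathbb{R}$ transversally at a real point $p$, then by Lemma \ref{le:symm} its reflection is a second smooth arc of $\mathfrak{H}$ through $p$; these two arcs have conjugate (hence distinct) tangent vectors at the non-critical point $p$, contradicting the $1$-manifold property there unless the two coincide, in which case the putative self-loop is actually an arc connecting $\lambda_3^N$ to $\lambda_4^N$, i.e.\ case (a).

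With the topology fixed, each of $\mathfrak{H}_L,\mathfrak{H}_R$ joins $\mathbb{C}^+$ to $\mathbb{C}^-$ and therefore crosses $\mathbb{R}$ an odd number of times; since $\mathfrak{H}\cap\mathbb{R}$ has at most two points by Lemma \ref{le:inter}, each crosses exactly once at points $x_L<x_R$, and consequently $\mathfrak{H}_\infty^{\pm}$ do not meet $\mathbb{R}$. Finally, choose $\mathcal{C}$ with $x^{\ast}\notin[x_L,x_R]$; then the tangential derivative $\frac{d}{dx}g(x)=\mathrm{Re}(\xi_2^N(x)-\xi_3^N(x))$ is continuous on $\mathbb{R}$ (continuity across $[\lambda_1^N,\lambda_2^N]$ being established within the proof of Lemma \ref{le:real23}) and vanishes only at the unique point $\iota\in[\lambda_1^N,\lambda_2^N]$ supplied by that lemma. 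Since $g(x_L)=g(x_R)=0$, Rolle's theorem forces $\iota\in(x_L,x_R)$, yielding $\iota\in(x_L,x_R)\cap[\lambda_1^N,\lambda_2^N]\neq\emptyset$. The main obstacle is the topological reduction in the second paragraph: ruling out self-loops requires simultaneously exploiting the maximum principle (together with the freedom in the choice of $\mathcal{C}$) and the rigidity imposed by combining $\mathbb{R}$-symmetry with the fact that $\mathfrak{H}$ is a smooth $1$-manifold at non-critical points. Once this is done, the remaining assertions are straightforward consequences of the degree count and the Rolle-type argument.
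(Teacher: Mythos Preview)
Your argument is essentially sound and arrives at the same conclusion, but it is organised differently from the paper and has one gap worth flagging. The paper never does a global degree count; instead it restricts to $\mathfrak{H}_{+}=\mathfrak{H}\cap\mathbb{C}^{+}$, observes that the three arcs leaving $\lambda_{3}^{N}$ are smooth in $\mathbb{C}^{+}\setminus\{\lambda_{3}^{N}\}$, and uses the maximum principle \emph{within the upper half plane} to show no two of them can rejoin. This forces one arc to $\infty$ and the other two down to $\mathbb{R}$ at distinct points; the paper then invokes Lemma~\ref{le:inter} to rule out any further component of $\mathfrak{H}_{+}$ and reflects. Your Euler--handshake count ``$3+3+2=8$, hence $4$ edges'' short-circuits that decomposition elegantly, but as written it presupposes that $\mathfrak{H}$ has no closed component disjoint from $\{\lambda_{3}^{N},\lambda_{4}^{N},\infty\}$. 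You never exclude such a loop, and the degree count is blind to it. The fix is the same maximum-principle step you already use for self-loops: any closed component bounds a region in which (after moving $\mathcal{C}$) $g$ is harmonic with zero boundary data. A second small point: the sentence ``by Lemma~\ref{le:symm} the unbounded branch in $\mathbb{C}^{+}$ emanates from $\lambda_{3}^{N}$'' is not immediate from symmetry alone---it also needs the $1$-manifold property at real points (two reflected arcs meeting on $\mathbb{R}$ would force a non-manifold point), which you only invoke later.

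Your Rolle argument for $(x_{L},x_{R})\cap[\lambda_{1}^{N},\lambda_{2}^{N}]\neq\emptyset$ is genuinely different and rather nice: the paper proves this by yet another application of the maximum principle to the region bounded by $\mathfrak{H}_{L}\cup\mathfrak{H}_{R}$, whereas you observe that $g(x_{L})=g(x_{R})=0$ forces a real critical point of $g$ in between, and Lemma~\ref{le:real23} pins that critical point inside $[\lambda_{1}^{N},\lambda_{2}^{N}]$. This is a clean substitute, and it extracts slightly more (namely that the specific point $\iota$ lies in $(x_{L},x_{R})$). The only care needed is that $g$ is $C^{1}$ across $\lambda_{1}^{N},\lambda_{2}^{N}$, which follows from the $3/2$-power behaviour of $\theta_{2}^{N}-\theta_{3}^{N}$ there; you might make that explicit.
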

\begin{proof} Let the sets $\mathfrak{H}_+$ and $\mathfrak{H}_-$ be
the intersections of $\mathfrak{H}$ with the upper and lower half
planes respectively. Then these 2 sets are reflections of each other
with respect to the real axis. Within the set $\mathfrak{H}_+$,
there are 3 curves $\mathfrak{H}_0^+$, $\mathfrak{H}_1^+$ and
$\mathfrak{H}_2^+$ coming out of the point $\lambda_3^N$. Let us
show that these curves are smooth except at $\lambda_3^N$. Suppose
there is a point $z_0$ on $\mathfrak{H}_j^+$ that is not smooth.
This means that the function $\theta_2^N-\theta_3^N$ is not
conformal at $z_0$. Since $\mathfrak{H}$ is independent on the
choice of the branch cut $\mathcal{C}$, by changing the branch cut
if necessary, we can assume that both $\theta_2^N(z)$ and
$\theta_3^N(z)$ are analytic at $z_0$ and therefore the derivative
of $\theta_2^N-\theta_3^N$ must be zero at $z_0$ as the function is
not conformal at $z_0$. This would imply
$\xi_2^N(z_0)=\xi_3^N(z_0)$, which is impossible as the only points
where this happens are the points $\lambda_k^N$. Therefore the
curves $\mathfrak{H}_j^+$, $j=0,1,2$ are smooth except at the point
$\lambda_3^N$.

We will now show that the curves $\mathfrak{H}_j^+$ cannot be
connected with one another except at the point $\lambda_3^N$.
Suppose the curves $\mathfrak{H}_j^+$ is connected to
$\mathfrak{H}_k^+$ at a point $z_0\neq \lambda_3^N$. Since both
curves $\mathfrak{H}_j^+$ and $\mathfrak{H}_k^+$ are smooth, the
curve $\mathfrak{H}_j^+\cup\mathfrak{H}_k^+$ forms a close loop in
the upper half plane. Let $V$ be the region bounded by this close
loop. Then by changing the choice of $\mathcal{C}$ if necessary, we
can assume that the functions $\theta_2^N(z)$ and $\theta_3^N(z)$
are analytic in the interior of $V$. Then the function
$\mathrm{Re}\left(\theta_2^N-\theta_3^N\right)$ is a harmonic
function in the interior of $V$ and has constant value at the
boundary of $V$. Therefore, by the maximum modulus principle this
function must be a constant in $V$. This is not possible and hence
the curves $\mathfrak{H}_j^+$ cannot be connected to each other.

By inspecting the behavior of $\theta_2^N-\theta_3^N$ at $z=\infty$,
we see that one of these curves must be an open curve that
approaches infinity at a direction parallel to the imaginary axis.
We will call this curve $\mathfrak{H}_{\infty}^+$ and its reflection
with respect to the real axis $\mathfrak{H}_{\infty}^-$. Since the
other 2 curves cannot intersect each other, and they cannot go to
infinity either, they must end at the real axis and be connected to
the curves in $\mathfrak{H}^-$. We will call the curve on the left
hand side $\mathfrak{H}_L^+$ and the one on the right hand side
$\mathfrak{H}_R^+$. These two curves must end at different points on
the real axis as they cannot intersect. Let us denote the curves
$\mathfrak{H}_L$ and $\mathfrak{H}_R$ by
\begin{equation}\label{eq:HLR}
\begin{split}
\mathfrak{H}_L&=\mathfrak{H}^+_L\cup\mathfrak{H}^-_L\cup\{x_L\},\\
\mathfrak{H}_R&=\mathfrak{H}^+_R\cup\mathfrak{H}^-_R\cup\{x_R\}.
\end{split}
\end{equation}
where $\mathfrak{H}_L^{-}$ and $\mathfrak{H}_R^-$ are the
reflections of $\mathfrak{H}_L^{+}$ and $\mathfrak{H}_R^+$ with
respect to the real axis and $x_L$, $x_R$ are their accumulation
points on the real axis.
\begin{figure}
\centering \psfrag{l1}[][][1][0.0]{\small$\lambda_{1}^N$}
\psfrag{l+}[][][1][0.0]{\small$\lambda_{3}^N$}
\psfrag{l-}[][][1][0.0]{\small$\lambda_{4}^N$}
\psfrag{l2}[][][1][0.0]{\small$\lambda_{2}^N$}
\psfrag{h+inf}[][][1][0.0]{\small$\mathfrak{H}_{\infty}^+$}
\psfrag{h-inf}[][][1][0.0]{\small$\mathfrak{H}_{\infty}^-$}
\psfrag{hL}[][][1][0.0]{\small$\mathfrak{H}_{L}$}
\psfrag{hR}[][][1][0.0]{\small$\mathfrak{H}_{R}$}
\psfrag{C}[][][1][0.0]{\small$\mathcal{C}$}
\psfrag{iota}[][][1][0.0]{\small$x^{\ast}$}
\psfrag{o+}[][][1][0.0]{\small$\Omega_R$}
\psfrag{o-}[][][1][0.0]{\small$\Omega_L$}
\psfrag{o+1}[][][1][0.0]{\small$\Omega_2$}
\psfrag{o+2}[][][1][0.0]{\small$\Omega_1$}
\includegraphics[scale=0.8]{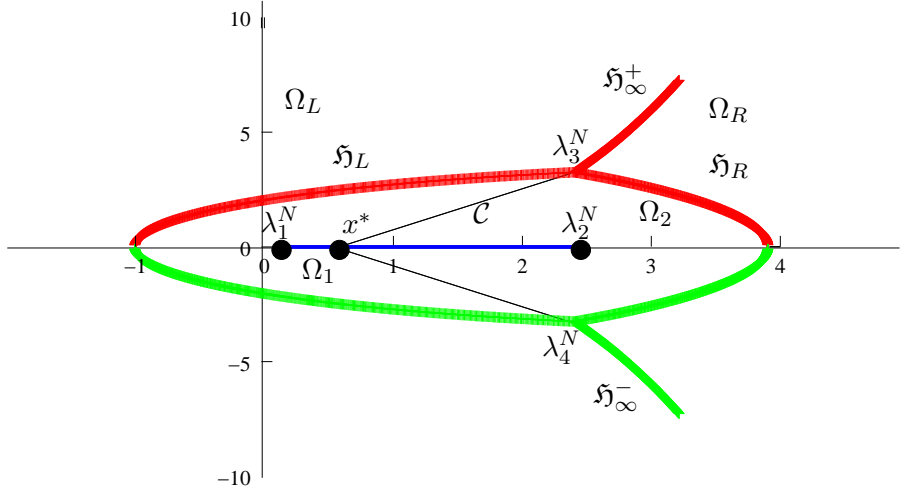}
\caption{The set $\mathfrak{H}$ for $a=0.9$, $\beta_N=0.7$ and
$c_N=0.4$. The branch points are given by $\lambda_1^N\approx
0.12518$, $\lambda_2^N\approx 2.48841$, $\lambda_{3}^N\approx
2.40520+3.2516i$ and $\lambda_4^N\approx 2.40520-3.2516i$. The point
$\iota$ in Lemma \ref{le:real23} is given by $\iota\approx0.602$.
For $a<1$, the function
$\mathrm{Re}\left(\theta_2^N(z)-\theta_3^N(z)\right)$ is negative in
the open region $\Omega_L$ on the left of $\mathfrak{H}_L$ and
positive in the region $\Omega_R$ on the right hand side of
$\mathfrak{H}_R$.}\label{fig:zeroset}
\end{figure}

Let us now show that
\begin{equation}\label{eq:H+}
\mathfrak{H}^+=\mathfrak{H}^+_{\infty}\cup\mathfrak{H}_L^+\cup\mathfrak{H}_R^+.
\end{equation}
Suppose there is a point $z_1\in\mathfrak{H}^+$ that does not belong
to any of the curves in the right hand side of (\ref{eq:H+}). Then
$z_1$ must belong to a curve $\mathfrak{H}_4^+\in\mathfrak{H}^+$. By
changing the definition of $\mathcal{C}$ again if necessary, we see
that $\mathfrak{H}_4^+$ must be smooth. This curve cannot end on the
real axis because by Lemma \ref{le:inter}, the set $\mathfrak{H}$
can at most intersect the real axis at 2 points and $\mathfrak{H}$
has already intersected the real axis at the 2 points $x_L$ and
$x_R$ in (\ref{eq:HLR}). The curve $\mathfrak{H}_4^+$ cannot
approach infinity or intersect any other curves in $\mathfrak{H}^+$
either and therefore it must be a close loop in the upper half
plane. As $\mathfrak{H}_4^+$ cannot intersect the curves in the
right hand side of (\ref{eq:H+}), the point $\lambda_3^N$ must lie
outside of the region $\tilde{V}$ bounded by $\mathfrak{H}_4^+$.
This would then imply that the harmonic function
$\mathrm{Re}\left(\theta_2^N-\theta_3^N\right)$ is constant inside
the region $\tilde{V}$, which is not possible and hence we have
\begin{equation*}
\mathfrak{H}=\mathfrak{H}^+_{\infty}\cup\mathfrak{H}^-_{\infty}\cup\mathfrak{H}_L\cup\mathfrak{H}_R.
\end{equation*}
Finally, if $(x_L,x_R)\cap[\lambda_1^N,\lambda_2^N]=\emptyset$, then
the function $\mathrm{Re}\left(\theta_2^N-\theta_3^N\right)$ is
harmonic inside the region bounded by $\mathfrak{H}_L$ and
$\mathfrak{H}_R$, which is not possible as it would imply that it is
a constant function in this region. This concludes the proof of the
proposition.
\end{proof}
The shape of the set $\mathfrak{H}$ is indicated in Figure
\ref{fig:zeroset}. The Octave generated figure shows the set for
$a=0.9$, $\beta_N=0.7$ and $c_N=0.4$.
\subsubsection{Jump discontinuities of the functions}
From now on, we will choose the branch cut $\mathcal{C}$ to be a
simple curve joining $\lambda_3^N$ and $\lambda_4^N$ that is
symmetric with respect to the real axis. We also require
$\mathcal{C}$ to lie between the curves $\mathfrak{H}_L$ and
$\mathfrak{H}_R$ in Proposition \ref{pro:shapeH} and that it
intersects $\mathbb{R}$ at a point
$\lambda_1^N<x^{\ast}<\lambda_2^N$. The integration contours for the
functions $\theta_j^N(z)$ in (\ref{eq:theta}) are chosen such that
they do not intersect the set $(-\infty,\lambda_2^N)\cup\mathcal{C}$
and the point $\lambda_l^N$ in (\ref{eq:theta}) is chosen to be
$\lambda_2^N$.

\begin{proposition}\label{pro:sheet}
If $a>1$, then $\lambda_2^N$ is a branch point of $\xi_3^N(z)$ and
$\lambda_1^N$ is a branch point of $\xi_2^N(z)$. On the other hand,
if $a<1$, then $\lambda_1^N$ is a branch point of $\xi_3^N(z)$ while
$\lambda_2^N$ is a branch point of $\xi_2^N(z)$.
\end{proposition}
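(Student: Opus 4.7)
The plan is to work entirely on the two real intervals $(0,\lambda_1^N)$ and $(\lambda_2^N,\infty)$, on which all three branches $\xi_1^N,\xi_2^N,\xi_3^N$ of (\ref{eq:curveN}) are real-analytic and mutually distinct. On these intervals the discriminant $D_3(z)$ is strictly positive (by (\ref{eq:realim}) applied to $\Lie_N$), the three real roots of the cubic cannot coincide, and---provided $\mathcal{C}$ is chosen so that $x^{\ast}\in(\lambda_1^N,\lambda_2^N)$---no branch cut of $\xi_2^N$ or $\xi_3^N$ crosses either interval. Consequently the pointwise ordering of the three real branches cannot change along either interval, since any change would require two roots to coincide and $D_3$ to vanish.

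Next I fix the orderings. Using the asymptotics (\ref{eq:xiinfty}) at $z\to\pm\infty$ and the labels $l_1,l_2\in\{2,3\}$ from the proof of Lemma \ref{le:real23} (so $l_2=3,l_1=2$ for $a>1$ and $l_2=2,l_1=3$ for $a<1$), one reads off $\xi_1^N>\xi_{l_2}^N>\xi_{l_1}^N$ near $\pm\infty$. By Lemma \ref{le:CS}, $\xi_1^N$ is real-analytic and monotonically increasing on $(\lambda_2^N,\infty)$, going from $\gamma_2^N$ at the left endpoint to $0^-$ at $+\infty$, so it remains the largest root and the ordering persists up to $\lambda_2^{N+}$. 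On $(0,\lambda_1^N)$, however, $\xi_1^N$ has a simple pole at $z=0$ with residue $-(1-c_N)$ (see (\ref{eq:xizero})) and then increases from $-\infty$ to $\gamma_1^N$, so it is the smallest root there. The relative order of $\xi_2^N,\xi_3^N$ on $(0,\lambda_1^N)$ is obtained by analytic continuation along the real axis from $z\to-\infty$ through $z=0$: both branches are bounded and analytic at the origin (the quadratic obtained from (\ref{eq:curveN}) at $z=0$ has two distinct real roots since $D_3(0)>0$), and they have no branch points on $(-\infty,\lambda_1^N)$, so the ordering $\xi_{l_2}^N>\xi_{l_1}^N$ from $-\infty$ propagates through the origin and gives $\xi_1^N<\xi_{l_1}^N<\xi_{l_2}^N$ on all of $(0,\lambda_1^N)$.

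The merging partner of $\xi_1^N$ at each $\lambda_k^N$ is then pinned down by a pinching argument. Because $D_3$ has a simple zero at $\lambda_k^N$, the branching is of square-root type: exactly two branches merge at the double root $\gamma_k^N$, approaching it from opposite sides along the real axis, while the third branch has a distinct limit. At $\lambda_2^N$, $\xi_1^N$ arrives from above, so the merging partner must arrive from below with limit $\gamma_2^N$. If that partner were $\xi_{l_1}^N$, then taking $z\to\lambda_2^{N+}$ in the preserved chain $\xi_1^N\geq\xi_{l_2}^N\geq\xi_{l_1}^N$ would sandwich $\xi_{l_2}^N(\lambda_2^N)=\gamma_2^N$ as well, producing a triple root at $\gamma_2^N$ and contradicting the simplicity of the zero of $D_3$. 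Hence the partner is $\xi_{l_2}^N$, identifying $\lambda_2^N$ as a branch point of $\xi_3^N$ for $a>1$ and of $\xi_2^N$ for $a<1$. The mirror-image argument at $\lambda_1^N$ (where $\xi_1^N$ arrives from below and the partner must be the second smallest) identifies the partner as $\xi_{l_1}^N$, giving the branch point claim at $\lambda_1^N$ and completing the two cases of the proposition.

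The main obstacle I expect is the ordering step on $(0,\lambda_1^N)$: neither an asymptotic expansion at a real infinity nor the local data at the pole of $\xi_1^N$ directly reveals whether $\xi_2^N$ or $\xi_3^N$ is larger there. The argument needs to justify rigorously that the analytic continuations of $\xi_2^N$ and $\xi_3^N$ carry their $z\to-\infty$ ordering through the origin without disruption, which in turn depends on the pole at $z=0$ lying entirely on the $\xi_1^N$-sheet (so the other two roots stay bounded and distinct through $z=0$) and on the choice of $\mathcal{C}$ keeping it off $(-\infty,\lambda_1^N)\cup(\lambda_2^N,\infty)$. Once this transport is in place, the square-root pinching finishes the proof in a few lines on each of the two half-lines.
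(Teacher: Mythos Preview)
Your proof is correct and follows essentially the same route as the paper: fix the real ordering of the three branches on each of $(0,\lambda_1^N)$ and $(\lambda_2^N,\infty)$ using the asymptotics at $\pm\infty$ and the pole of $\xi_1^N$ at the origin, propagate the ordering of $\xi_2^N,\xi_3^N$ from $-\infty$ through $z=0$ (where they stay bounded and distinct), and then read off which branch merges with $\xi_1^N$ at each endpoint from adjacency in the chain of inequalities. Your pinching/sandwich formulation of the last step is just a spelled-out version of the paper's one-line argument that the root coinciding with $\xi_1^N$ must be its immediate neighbor in the ordering.
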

\begin{proof} Let $l_1=2$, $l_2=3$ for $a>1$ and $l_1=3$, $l_2=2$ for $a<1$. Then
for large enough $z_0>\lambda_2^N$, all the functions $\xi_j^N$ are
real and we have $\xi_1^N(z_0)>\xi_{l_2}^N(z_0)>\xi_{l_1}^N(z_0)$ by
(\ref{eq:xiinfty}). This ordering must preserve at $\lambda_2^N$ as
the roots cannot coincide between $z_0$ and $\lambda_2^N$. At
$\lambda_2^N$, one of the roots must coincide with $\xi_1^N$ and
from the ordering $\xi_1^N(z_0)>\xi_{l_2}^N(z_0)>\xi_{l_1}^N(z_0)$,
we must have
$\xi_1^N(\lambda_2^N)=\xi_{l_2}^N(\lambda_2^N)=\gamma_2^N$. On the
other hand, for small enough $0<\epsilon<\lambda_1^N$, all three
$\xi_j^N$ will be real. From the asymptotic behavior of $\xi_1^N(z)$
at $0$ (\ref{eq:xizero}), we see that for small enough $\epsilon$,
$\xi_1^N(\epsilon)$ will be smaller than both $\xi_2^N(\epsilon)$
and $\xi_3^N(\epsilon)$. From the asymptotic behavior of $\xi_2^N$
and $\xi_3^N$ at $z=-\infty$ (\ref{eq:xiinfty}) and the fact that
these 2 functions has no singularity and cannot coincide in
$(-\infty, \lambda_1^N)$, we see that at $z=\epsilon$, we must have
$\xi_{l_2}^N(\epsilon)>\xi_{l_1}^N(\epsilon)$. Therefore we have
$\xi_{l_2}^N(\epsilon)>\xi_{l_1}^N(\epsilon)>\xi_1^N(\epsilon)$.
This ordering must again be preserved at $\lambda_1^N$. Therefore we
must have
$\xi_1^N(\lambda_1^N)=\xi_{l_1}^N(\lambda_1^N)=\gamma_1^N$. This
shows that $\lambda_2^N$ is a branch point of $\xi_{l_2}^N$ while
$\lambda_1^N$ is a branch point of $\xi_{l_1}^N$.
\end{proof}
We can now determine the jump discontinuities of $\xi_j^N$ on the
branch cuts
\begin{equation}\label{eq:bound}
\begin{split}
\xi_{1,\pm}^N(z)&=
               \xi_{2,\mp}^N(z),  \quad z\in\mathfrak{B}_{k_2},
\quad \xi_{1,\pm}^N(z)=
               \xi_{3,\mp}^N(z),  \quad z\in\mathfrak{B}_{k_3}, \\
\xi_{2,\pm}^N(z)&=\xi_{3,\mp}^N(z),\quad z\in\mathcal{C}.
\end{split}
\end{equation}
where $k_2=1$, $k_3=2$ for $a>1$ and $k_2=2$, $k_3=1$ for $a<1$ and
$\mathfrak{B}_j$ are defined by
\begin{equation}\label{eq:frakB}
\mathfrak{B}_1=[\lambda_1^N,x^{\ast}),\quad
\mathfrak{B}_2=(x^{\ast},\lambda_2^N].
\end{equation}
The branch cut structure of the Riemann surface $\Lie_N$ is
indicated in Figure \ref{fig:sheets2}.
\begin{figure}
\centering \psfrag{x1}[][][1][0.0]{\small$\xi_1^N$}
\psfrag{x2}[][][1][0.0]{\small$\xi_2^N$}
\psfrag{x3}[][][1][0.0]{\small$\xi_3^N$}
\includegraphics[scale=0.75]{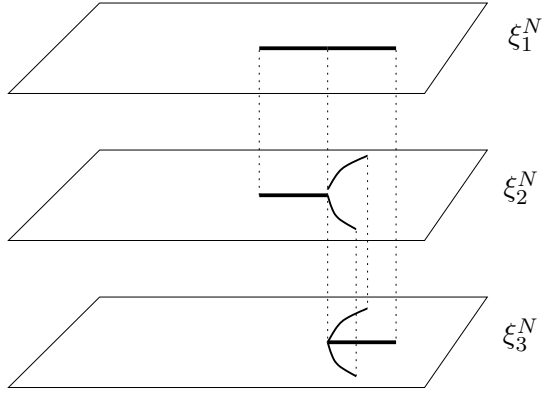} \caption{The branch cut
structure of the Riemann surface $\Lie_N$ when
$a>1$.}\label{fig:sheets2}
\end{figure}

Let us define $\tilde{\theta}_j^N(z)$ to be constant shifts of the
$\theta_j^N(z)$.
\begin{equation}\label{eq:thetatilde}
\begin{split}
\tilde{\theta}_1^N(z)&=\theta_1^N(z);\\
\tilde{\theta}_2^N(z)&=\left\{\begin{array}{ll}
             \theta_2^N(z)-\theta_{2,+}^N(\lambda_1^N)+\theta_{1,-}^N(\lambda^N_{1}), & \hbox{$a>1$;} \\
             \theta_2^N(z)-\theta_{2,+}^N(\lambda_2^N), & \hbox{$a<1$.}
           \end{array}
         \right.\\
\tilde{\theta}_3^N(z)&=\left\{
            \begin{array}{ll}
              \theta_3^N(z)-\theta_{3,+}^N(\lambda^N_{2}), & \hbox{$a>1$;} \\
              \theta_3^N(z)-\theta_{3,+}^N(\lambda_1^N)+\theta_{1,-}^N(\lambda^N_{1}), & \hbox{$a<1$.}
            \end{array}
          \right.
\end{split}
\end{equation}
Note that the difference between $\theta_2^N(z)$ and $\theta_3^N(z)$
are the same as the difference between $\tilde{\theta}_2^N(z)$ and
$\tilde{\theta}_3^N(z)$.
\begin{lemma}\label{le:diff23}
Let $\tilde{\theta}_j^N(z)$ be defined as in (\ref{eq:thetatilde}),
then we have
$\theta_2^N(z)-\theta_3^N(z)=\tilde{\theta}_2^N(z)-\tilde{\theta}_3^N(z)$.
If $z\in(-\infty,\lambda_2^N)\cup\mathcal{C}$, then we also have
\begin{equation}\label{eq:diff23}
\begin{split}
\theta_{2,\pm}^N(z)-\theta_{3,\pm}^N(z)&-\left(\tilde{\theta}_{2,\pm}^N(z)-\tilde{\theta}_{3,\pm}^N(z)\right)=0,\\
\theta_{2,\pm}^N(z)-\theta_{3,\mp}^N(z)&-\left(\tilde{\theta}_{2,\pm}^N(z)-\tilde{\theta}_{3,\mp}^N(z)\right)=0
\end{split}
\end{equation}
where the `$+$' and `$-$' subscripts indicates the boundary values
on the left and right hand sides of
$(-\infty,\lambda_2^N)\cup\mathcal{C}$.
\end{lemma}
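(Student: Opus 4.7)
The plan is to observe that (\ref{eq:thetatilde}) writes $\tilde{\theta}_2^N(z)=\theta_2^N(z)+c_2$ and $\tilde{\theta}_3^N(z)=\theta_3^N(z)+c_3$ for certain constants $c_2,c_3$, with $\tilde{\theta}_1^N=\theta_1^N$. The first claim of the lemma, $\theta_2^N-\theta_3^N=\tilde{\theta}_2^N-\tilde{\theta}_3^N$, is therefore equivalent to the single scalar identity $c_2=c_3$. Once this is in hand, both lines of (\ref{eq:diff23}) follow at once by taking $+$- or $-$-limits of the bulk identity on $(-\infty,\lambda_2^N)\cup\mathcal{C}$, since the common constant $c_2-c_3=0$ cancels on both sides of each equation. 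Hence the whole lemma reduces to showing $c_2=c_3$.

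I will carry out the argument for $a>1$; the case $a<1$ is entirely analogous after the roles of $\xi_2^N$ and $\xi_3^N$ (and of $\lambda_1^N,\lambda_2^N$) are interchanged as dictated by Proposition \ref{pro:sheet}. For $a>1$ the identity that must be proved is
\begin{equation*}
-\theta_{2,+}^N(\lambda_1^N)+\theta_{1,-}^N(\lambda_1^N)+\theta_{3,+}^N(\lambda_2^N)=0.
\end{equation*}
The strategy is to deform each of the three integration contours so that every piece lies either along $\mathcal{C}$ or along $[\lambda_1^N,\lambda_2^N]$, and then to identify pieces using the jump relations (\ref{eq:bound}). First, I collapse the lower-half-plane contour in $\theta_{1,-}^N(\lambda_1^N)$ onto the underside of $[\lambda_1^N,\lambda_2^N]$, split it at $x^\ast$, and use $\xi_{1,-}^N=\xi_{2,+}^N$ on $\mathfrak{B}_1$ and $\xi_{1,-}^N=\xi_{3,+}^N$ on $\mathfrak{B}_2$ to obtain $\theta_{1,-}^N(\lambda_1^N)=-B-D$, where $B:=\int_{\lambda_1^N}^{x^\ast}\xi_{2,+}^N(x)\,dx$ and $D:=\int_{x^\ast}^{\lambda_2^N}\xi_{3,+}^N(x)\,dx$. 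Next, I deform the contour in $\theta_{2,+}^N(\lambda_1^N)$ so that it hugs the $+$-side of $\mathcal{C}$ from $\lambda_3^N$ down to $x^\ast$ and then runs along the upper side of $\mathfrak{B}_1$ back to $\lambda_1^N$; this yields $\theta_{2,+}^N(\lambda_1^N)=A-B$, where $A$ denotes the integral of $\xi_{2,+}^N\,dz$ along $\mathcal{C}$ from $\lambda_3^N$ to $x^\ast$. Similarly, deforming the contour in $\theta_{3,+}^N(\lambda_2^N)$ onto the $-$-side of $\mathcal{C}$ and then the upper side of $\mathfrak{B}_2$ gives $\theta_{3,+}^N(\lambda_2^N)=C+D$, with $C$ the integral of $\xi_{3,-}^N\,dz$ along $\mathcal{C}$ from $\lambda_3^N$ to $x^\ast$.

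Substituting these expressions, the $B$- and $D$-pieces cancel and the target reduces to $-A+C=0$. The final step invokes the $\mathcal{C}$-jump $\xi_{2,+}^N=\xi_{3,-}^N$ from (\ref{eq:bound}): since the two integrands agree pointwise along $\mathcal{C}$, one has $A=C$, and therefore $c_2-c_3=0$, completing the reduction.

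The main subtlety is the geometric bookkeeping: the allowed contours must avoid $\mathcal{C}\cup(-\infty,\lambda_2^N)$, and since $\lambda_1^N<x^\ast<\lambda_2^N$ this forces the deformed contours for $\theta_{2,+}^N(\lambda_1^N)$ and $\theta_{3,+}^N(\lambda_2^N)$ onto opposite sides of $\mathcal{C}$ in the upper half-plane. It is exactly this opposite-side placement that allows the $\mathcal{C}$-jump to identify $A$ with $C$; if the two paths landed on the same side of $\mathcal{C}$, the jump would produce a non-trivial monodromy contribution and the cancellation would fail. Consequently the argument depends essentially on the geometric picture for $\mathfrak{H}$ developed in Section \ref{se:geo}, which places $x^\ast$ strictly inside $(\lambda_1^N,\lambda_2^N)$ and puts the two end-points $\lambda_1^N,\lambda_2^N$ on opposite sides of it.
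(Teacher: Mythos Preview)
Your proof is correct and follows essentially the same route as the paper's own argument: both reduce the lemma to the vanishing of the single constant $\theta_{l_1,+}^N(\lambda_1^N)-\theta_{l_2,+}^N(\lambda_2^N)-\theta_{1,-}^N(\lambda_1^N)$, deform the relevant integration contours onto the two sides of $\mathcal{C}$ and the upper side of $[\lambda_1^N,\lambda_2^N]$, and then invoke the jump relations (\ref{eq:bound}) to identify the $\mathcal{C}$-pieces and convert the real-axis pieces into $\xi_{1,-}^N$-integrals. The only cosmetic difference is that you name the four pieces $A,B,C,D$ and assemble the cancellation explicitly, whereas the paper subtracts $\theta_{l_1,+}^N(\lambda_1^N)$ and $\theta_{l_2,+}^N(\lambda_2^N)$ directly; your closing remark slightly overstates the role of Section \ref{se:geo}, since $\lambda_1^N<x^\ast<\lambda_2^N$ is a choice made at the start of the ``Jump discontinuities'' subsection rather than a consequence of the $\mathfrak{H}$-analysis.
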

\begin{proof}Let $l_1=2$, $l_2=3$ when $a>1$ and $l_1=3$, $l_2=2$ when $a<1$. Then from (\ref{eq:thetatilde}), we have
\begin{equation}\label{eq:diffcon}
\tilde{\theta}_2^N(z)-\tilde{\theta}_3^N(z)-\left(\theta_2^N(z)-\theta_3^N(z)\right)=(-1)^{l_1+1}\left(\theta_{l_1,+}^N(\lambda_{1}^N)
-\theta_{l_2,+}^N(\lambda_{2}^N)-\theta_{1,-}^N(\lambda_1^N)\right).
\end{equation}
From (\ref{eq:thetatilde}), it is clear that if
$z\in(-\infty,\lambda_2^N)\cup\mathcal{C}$, then the differences in
(\ref{eq:diff23}) are also given by the constant on the right hand
side of (\ref{eq:diffcon}).

Let us consider the difference $\theta_{l_1,+}^N(\lambda_{1}^N)
-\theta_{l_2,+}^N(\lambda_{2}^N)$. We will compute
$\theta_{l_1,+}^N(\lambda_{1}^N)$
($\theta_{l_2,+}^N(\lambda_{2}^N)$) with an integration path that
consists of 2 parts. The first part goes along the left (right) hand
side of $\mathcal{C}$ from $\lambda_3^N$ to $x^{\ast}$. The second
part goes along the positive side of the real axis from $x^{\ast}$
to $\lambda_1^N$ ($\lambda_2^N$). We then have
\begin{equation}\label{eq:theconst}
\begin{split}
\theta_{l_1,+}^N(\lambda_1^N)&=\int_{\lambda_3^N}^{x^{\ast}}\xi_{l_1,+}^N(x)dx+\int_{x^{\ast}}^{\lambda_1^N}\xi_{l_1,+}^N(x)dx,\\
\theta_{l_2,+}^N(\lambda_2^N)&=\int_{\lambda_3^N}^{x^{\ast}}\xi_{l_2,-}^N(x)dx+\int_{x^{\ast}}^{\lambda_2^N}\xi_{l_2,+}^N(x)dx
\end{split}
\end{equation}
From (\ref{eq:bound}), we see that $\xi_{l_1,+}^N=\xi_{l_2,-}^N$
along $\mathcal{C}$ and $\xi_{l_1,+}^N(x)=\xi_{1,-}^N(x)$ along
$(x^{\ast},\lambda_1^N)$, while $\xi_{l_2,+}^N(x)=\xi_{1,-}^N(x)$
along $(x^{\ast},\lambda_2^N)$. From this and (\ref{eq:theconst}),
we obtain
\begin{equation}\label{eq:diff1}
\begin{split}
\theta_{l_1,+}^N(\lambda_1^N)-\theta_{l_2,+}^N(\lambda_2^N)=\int_{\lambda_2^N}^{\lambda_1^N}\xi_{1,-}^N(x)dx=\theta_{1,-}^N(\lambda_1^N),
\end{split}
\end{equation}
where the last inequality follows from the fact that $\xi_1^N(z)$ is
analytic across $\mathcal{C}$ and hence we can choose the
integration path for $\theta_1^N(\lambda_1^N)$ to be along the real
axis.

From (\ref{eq:diff1}), we see that $\theta_{l_1,+}^N(\lambda_{1}^N)
-\theta_{l_2,+}^N(\lambda_{2}^N)-\theta_{1,-}^N(\lambda_1^N)=0$ and
hence
$\theta_2^N(z)-\theta_3^N(z)=\tilde{\theta}_2^N(z)-\tilde{\theta}_3^N(z)$.
\end{proof}
From the behavior of $\xi_j^N(z)$ on the cuts, we have the following
analyticity properties of the $\tilde{\theta}_j^N(z)$.
\begin{lemma}\label{le:cuttheta}
The function $\tilde{\theta}_1^N(z)$ is analytic on
$\mathbb{C}\setminus(-\infty,\lambda_2^N]$. The function
$\tilde{\theta}_2^N(z)$ ($\tilde{\theta}_3^N(z)$) is analytic on
$\mathbb{C}\setminus\left((-\infty,x^{\ast}]\cup\mathcal{C}\right)$
when $a>1$ ($a<1$) and it is analytic on
$\mathbb{C}\setminus\left((-\infty,\lambda_2^N]\cup\mathcal{C}\right)$
when $a<1$ ($a>1$). Let $k_2=1$, $k_3=2$ for $a>1$ and $k_2=2$,
$k_3=1$ for $a<1$. Let $\mathcal{C}_{\pm}$ are the intersections of
$\mathcal{C}$ with the upper/lower half planes. Then the integrals
$\tilde{\theta}_j^N(z)$ have the following jump discontinuities.
\begin{equation}\label{eq:cuttheta}
\begin{split}
\tilde{\theta}_{1,\pm}^N(z)&=\tilde{\theta}_{j,\mp}^N(z)+\upsilon_{\pm},
,\quad z\in \mathfrak{B}_{k_j},\quad j=2,3,\\
\tilde{\theta}_{2,\pm}^N(z)&=\tilde{\theta}_{3,\mp}^N(z),\quad z\in\mathcal{C}_+,\\
\tilde{\theta}_{2,\pm}^N(z)&=\tilde{\theta}_{3,\mp}^N(z)+2c_N\beta_N\pi i,\quad a>1,\quad z\in\mathcal{C}_-,\\
\tilde{\theta}_{2,\pm}^N(z)&=\tilde{\theta}_{3,\mp}^N(z)-2c_N(1-\beta_N)\pi i,\quad a<1,\quad z\in\mathcal{C}_-,\\
\tilde{\theta}_{1,+}^N(z)&=\tilde{\theta}_{1,-}^N(z)-2c_N\pi i,\quad
z\in (0,\lambda_1^N],\\
\tilde{\theta}_{1,+}^N(z)&=\tilde{\theta}_{1,-}^N(z)-2\pi i,\quad
z\in (-\infty,0),\\
\tilde{\theta}_{2,+}^N(z)&=\tilde{\theta}_{2,-}^N(z)+2c_N(1-\beta_N)\pi
i,\quad z\in
(-\infty,\lambda_{1}^N],\\
\tilde{\theta}_{2,+}^N(z)&=\tilde{\theta}_{2,-}^N(z)+2c_N(1-\beta_N)\pi
i,\quad z\in \mathfrak{B}_1,\quad a<1,\\
\tilde{\theta}_{3,+}^N(z)&=\tilde{\theta}_{3,-}^N(z)+2c_N\beta_N\pi
i,\quad z\in (-\infty,\lambda_{1}^N],\\
\tilde{\theta}_{3,+}^N(z)&=\tilde{\theta}_{3,-}^N(z)+2c_N\beta_N\pi
i,\quad z\in \mathfrak{B}_1,\quad a>1.
\end{split}
\end{equation}
where $\upsilon_{\pm}$ is the constant
\begin{equation}\label{eq:up}
\upsilon_{\pm}=\theta_{j,+}\left(\lambda_{k_j}^N\right)
-\theta_{j,\mp}\left(\lambda_{k_j}^N\right)+\theta_{1,\pm}\left(\lambda_{k_j}^N\right)-
\theta_{1,-}\left(\lambda_{k_j}^N\right).
\end{equation}
and $\mathfrak{B}_j$ is defined in (\ref{eq:frakB}). In particular,
by the jump discontinuities of the $\tilde{\theta}_j^N(z)$ at the
points $\lambda_k^N$, and the fact that $\theta_j^N(z)$ and
$\tilde{\theta}_j^N(z)$ differs by a constant shift only, we see
that the constant $\upsilon_{\pm}$ is either $-2c_N\beta_N\pi i$,
$-2c_N\left(1-\beta_N\right)\pi i$ or 0.
\end{lemma}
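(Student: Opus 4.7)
My proof proceeds in three natural stages: first the analyticity regions, second the jumps across the genuine branch cuts of the $\xi_j^N$, and third the jumps across the segments of the real axis where $\theta_j^N$ is discontinuous even though its integrand is not. Since each $\tilde\theta_j^N$ differs from the corresponding $\theta_j^N$ by a constant shift, the analyticity regions of the two coincide. For $\theta_1^N$ the integrand $\xi_1^N$ is analytic off $[\lambda_1^N,\lambda_2^N]$ with a simple pole at $0$, and the integration contour from $\lambda_2^N$ to $z$ is not permitted to cross $(-\infty,\lambda_2^N]\cup\mathcal{C}$, so $\theta_1^N$ is analytic on $\mathbb{C}\setminus(-\infty,\lambda_2^N]$. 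For $\theta_2^N$ and $\theta_3^N$ Proposition \ref{pro:sheet} tells me exactly which of $\lambda_1^N,\lambda_2^N$ is a branch point, and combining with $\mathcal{C}$ and the forbidden segment gives the stated analyticity domains depending on whether $a>1$ or $a<1$.

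Next I would derive the jumps on the branch cuts. The boundary identities \eqref{eq:bound} say $\xi_{1,\pm}^N=\xi_{j,\mp}^N$ on $\mathfrak{B}_{k_j}$ and $\xi_{2,\pm}^N=\xi_{3,\mp}^N$ on $\mathcal{C}$. Integrating from $\lambda_2^N$ to $z$ along the $\pm$ side of the respective cut relates $\theta_{1,\pm}^N(z)$ to $\theta_{j,\mp}^N(z)$ plus a constant that measures the discrepancy of the two base points; this constant is precisely $\upsilon_\pm$ in \eqref{eq:up} after the normalizations \eqref{eq:thetatilde} are absorbed. On $\mathcal{C}_+$ the same reasoning yields $\tilde\theta_{2,\pm}^N=\tilde\theta_{3,\mp}^N$ with no extra constant because the contour from $\lambda_3^N$ to $z$ can stay within the upper half plane. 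On $\mathcal{C}_-$, however, connecting $\lambda_3^N$ to $z\in\mathcal{C}_-$ by a path whose mirror lies in the upper half plane forces the closed loop to encircle $z=0$, and the resulting residue is governed by \eqref{eq:asymtheta}: the $\log z$ coefficient of $\theta_3^N$ ($a>1$) is $c_N\beta_N$, while that of $\theta_2^N$ ($a<1$) is $c_N(1-\beta_N)$, producing the shifts $\pm 2c_N\beta_N\pi i$ and $\mp 2c_N(1-\beta_N)\pi i$ stated in \eqref{eq:cuttheta}.

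The third stage handles the segments $(-\infty,0)$, $(0,\lambda_1^N]$, and $(-\infty,\lambda_1^N]$ on which the integrand is analytic but the path constraint still produces a jump. Here I would use that, although $\xi_1^N$ is analytic across such a segment, the two allowed paths from $\lambda_2^N$ to $z$ form a closed loop whose winding numbers around $z=0$ and around $\infty$ are forced by the geometry. From \eqref{eq:xizero} the residue of $\xi_1^N$ at $0$ is $-(1-c_N)$, and from the $-\log z$ leading term in \eqref{eq:asymtheta} the ``residue at infinity'' is $-1$. Taking loops that enclose only $\infty$ (for $z\in(-\infty,0)$) or both $0$ and $\infty$ (for $z\in(0,\lambda_1^N]$) yields the jumps $-2\pi i$ and $-2\pi i-(-2\pi i(1-c_N))=-2c_N\pi i$. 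The analogous calculation for $\tilde\theta_2^N$ and $\tilde\theta_3^N$, using the $\log z$-coefficients $c_N(1-\beta_N)$ and $c_N\beta_N$ from \eqref{eq:asymtheta}, yields the remaining entries of \eqref{eq:cuttheta}.

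The main obstacle, which I would leave for last, is determining the precise value of the constant $\upsilon_\pm$ in each of its three possible forms. Since by Lemma \ref{le:diff23} the combination $\theta_2^N-\theta_3^N$ is invariant under the renormalizations \eqref{eq:thetatilde}, the consistency of all the jumps of $\tilde\theta_1^N$ and $\tilde\theta_{j,\mp}^N$ along the real axis around $\lambda_1^N$ and $\lambda_2^N$ constrains $\upsilon_\pm$ to an element of $\{0,-2c_N\beta_N\pi i,-2c_N(1-\beta_N)\pi i\}$. Cross-checking the boundary values at $\lambda_k^N$ through \eqref{eq:bound}, together with the previously established jumps across $(0,\lambda_1^N]$ and $(-\infty,0)$, forces the specific value dictated by the parity of the case ($a>1$ vs.\ $a<1$) and the sign of the boundary label, completing the proof.
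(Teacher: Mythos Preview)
Your plan matches the paper's proof: both derive the branch-cut jumps directly from the boundary relations \eqref{eq:bound} and obtain every additive constant by closing a loop and reading off the residue at $\infty$ via the $\log z$ coefficients in \eqref{eq:asymtheta}. One point to tighten: on $\mathcal{C}_-$ the relevant closed loop must enclose the full branch-cut configuration $\mathfrak{B}_2\cup\mathcal{C}$ (so that it can be deformed to $\infty$), not merely $z=0$---since $\xi_2^N$ and $\xi_3^N$ are regular at the origin, a loop around $0$ alone contributes nothing. The paper makes this step explicit by first reducing the $\mathcal{C}_-$ jump to the single constant $\theta_2^N(\lambda_4^N)-\theta_3^N(\lambda_4^N)$ and then evaluating that constant along a contour that wraps around $\mathfrak{B}_2\cup\mathcal{C}$ and deforms to a loop at $\infty$, which is exactly what your log-coefficient argument is meant to capture.
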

\begin{proof} From the jump discontinuities of $\xi_j^N(z)$ in
(\ref{eq:bound}), we see that
\begin{equation*}
\begin{split}
\left(\int_{\lambda_{k_j}^N}^z\xi_{1}^N(x)dx\right)_{\pm}&=\left(\int_{\lambda_{k_j}^N}^z\xi_{j}^N(x)dx\right)_{\mp},\quad z\in \mathfrak{B}_{k_j},\quad j=2,3\\
\left(\int_{\lambda_{3}^N}^z\xi_{2}^N(x)dx\right)_{\pm}&=\left(\int_{\lambda_{3}^N}^z\xi_{3}^N(x)dx\right)_{\mp},\quad
z\in\mathcal{C}_+,\\
\left(\int_{\lambda_{4}^N}^z\xi_{2}^N(x)dx\right)_{\pm}&=\left(\int_{\lambda_{4}^N}^z\xi_{3}^N(x)dx\right)_{\mp},\quad
z\in \mathcal{C}_-.
\end{split}
\end{equation*}
The path of integration in the first equation is taken along the
real axis and the integration paths in the last 2 equations are
taken along $\mathcal{C}$. By comparing this with (\ref{eq:theta})
and (\ref{eq:thetatilde}) and making use of Lemma \ref{le:diff23},
we obtain the first two equations in (\ref{eq:cuttheta}), together
with
\begin{equation}\label{eq:cutc-}
\tilde{\theta}_{2,\pm}^N(z)=\tilde{\theta}_{3,\mp}^N(z)+\theta_2^N(\lambda_4^N)-\theta_3^N(\lambda_4^N),\quad
z\in\mathcal{C}_-.
\end{equation}
Let us now compute the constant
$\theta_2^N(\lambda_4^N)-\theta_3^N(\lambda_4^N)$. Let $l_1=2$,
$l_2=3$ when $a>1$ and $l_1=3$, $l_2=2$ when $a<1$. Then
$\xi_{l_1}^N$ will be analytic on $(x^{\ast},\lambda_2^N]$ and hence
we can compute $\theta_{l_1}^N(\lambda_4^N)$ using a contour that
goes along the right hand side of $\mathcal{C}$. That is, we have
\begin{equation}\label{eq:thetal1}
\theta_{l_1}^N(\lambda_4^N)=\int_{\lambda_3^N}^{\lambda_4^N}\xi_{l_1,-}^N(x)dx.
\end{equation}
where the integration is performed along $\mathcal{C}$. To compute
$\theta_{l_2}^N(\lambda_4^N)$, let us choose an integration contour
as follows. The integration contour consists of three parts. The
first part goes from $\lambda_3^N$ to $x^{\ast}$ on the left hand
side of $\mathcal{C}$. The second part is a closed loop
$\mathcal{S}$ that goes from $x^{\ast}$ to $x^{\ast}$ with the
branch cut $\mathfrak{B}_2\cup\mathcal{C}$ of $\xi_{l_2}^N$ inside
it. The last part goes from $x^{\ast}$ to $\lambda_4^N$ along the
left hand side of $\mathcal{C}$. That is, we have
\begin{equation*}
\theta_{l_2}^N(\lambda_4^N)=\int_{\lambda_3^N}^{x^{\ast}}\xi_{l_2,+}^N(x)dx+\oint_{\mathcal{S}}\xi_{l_2}^N(x)dx
+\int_{x^{\ast}}^{\lambda_4^N}\xi_{l_2,+}^N(x)dx.
\end{equation*}
From the jump discontinuities of $\xi_j^N(x)$ on $\mathcal{C}$, we
obtain
\begin{equation*}
\theta_{l_2}^N(\lambda_4^N)=\int_{\lambda_3^N}^{\lambda_4^N}\xi_{l_1,-}^N(x)dx+\oint_{\mathcal{S}}\xi_{l_2}^N(x)dx.
\end{equation*}
Since branch cuts of $\xi_{l_2}^N$ are inside the loop
$\mathcal{S}$, this loop can be deformed into a loop around
$z=\infty$. By computing the integral using residue theorem, we
obtain
\begin{equation*}
\begin{split}
\theta_{2}^N(\lambda_4^N)&-\theta_{3}^N(\lambda_4^N)=-2\pi
ic_N(1-\beta_N),\quad a<1,\\
\theta_{2}^N(\lambda_4^N)&-\theta_{3}^N(\lambda_4^N)=2\pi
ic_N\beta_N,\quad a>1.
\end{split}
\end{equation*}
This, together with (\ref{eq:cutc-}) gives the third and fourth
equations in (\ref{eq:cuttheta}).

Let us now show that
\begin{equation}\label{eq:theta2}
\tilde{\theta}_{2,+}^N(z)=\tilde{\theta}_{2,-}^N(z)+2c_N(1-\beta_N)\pi
i,\quad z\in (-\infty,\lambda_{1}^N].
\end{equation}
The corresponding equation for $\tilde{\theta}_1^N(z)$ and
$\tilde{\theta}_3^N(z)$ can be proven in a similar way.
\begin{figure}
\centering \psfrag{l1}[][][1][0.0]{\small$\lambda_1^N$}
\psfrag{l2}[][][1][0.0]{\small$\lambda_2^N$}
\psfrag{l3}[][][1][0.0]{\small$\lambda_3^N$}
\psfrag{l4}[][][1][0.0]{\small$\lambda_4^N$}
\psfrag{g+}[][][1][0.0]{\small$\Gamma_+$}
\psfrag{g-}[][][1][0.0]{\small$\Gamma_-$}
\psfrag{z}[][][1][0.0]{\small$z$}
\includegraphics[scale=0.75]{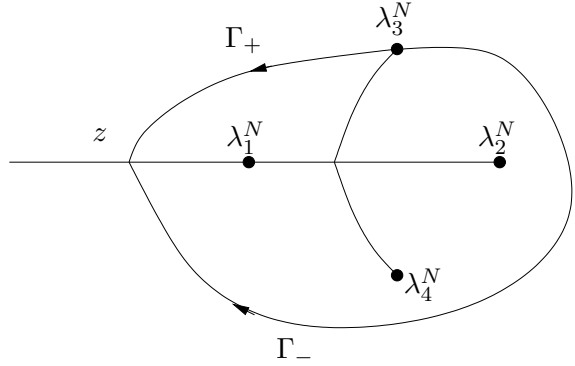}
\caption{The contours $\Gamma_+$ and $\Gamma_-$.}\label{fig:Gammapm}
\end{figure}
Let $z\in(-\infty,\lambda_{1}^N]$ and let $\Gamma_{\pm}$ be contours
from $\lambda_{3}^N$ to $z$ indicated as in Figure
\ref{fig:Gammapm}. Then we have
\begin{equation*}
\theta_{2,\pm}^N(z)=\int_{\Gamma_{\pm}}\xi_2^N(x)dx.
\end{equation*}
Let $\Gamma$ be the close loop on $\mathbb{C}$ such that
$\Gamma=\Gamma_+-\Gamma_-$, then we have
\begin{equation*}
\theta_{2,+}^N(z)=\theta_{2,-}^N(z)+\oint_{\Gamma}\xi_2^N(x)dx,\quad
z\in (-\infty,\lambda_{1}^N].
\end{equation*}
By Cauchy's theorem, we can deform the loop $\Gamma$ such that
$\Gamma$ becomes a loop around $z=\infty$. By computing the residue,
we obtain
\begin{equation}\label{eq:theta20}
\theta_{2,+}^N(z)=\theta_{2,-}^N(z)+2c_N(1-\beta_N)\pi i,\quad z\in
(-\infty,\lambda_{1}^N].
\end{equation}
Since $\tilde{\theta}_2^N(z)$ is a constant shift of
$\theta_2^N(z)$, (\ref{eq:theta20}) implies (\ref{eq:theta2}).

By using similar argument, we can obtain the rest of the jump
discontinuities.
\end{proof}
We will conclude this section with the following results on the
relative sizes of the
$\mathrm{Re}\left(\tilde{\theta}_j^N(z)\right)$, which are essential
in the implementation of the Riemann-Hilbert method.
\begin{lemma}\label{le:size}
The real parts of $\tilde{\theta}_j^N(z)$ in (\ref{eq:thetatilde})
are continuous in $\mathbb{R}_+\setminus[\lambda_{1},\lambda_{2}]$
and we have the followings.
\begin{enumerate}
\item Let $x_L<x_R$ be the points where $\mathfrak{H}$ intersects
the real axis, then we have
\begin{equation}\label{eq:size}
\begin{split}
&\mathrm{Re}\left(\tilde{\theta}_1^N(z)-\tilde{\theta}_{j}^N(z)\right)>0,\quad
z\in\mathbb{R}_+\setminus[x_L,x_R] ,\quad j=1,2,\\
&\mathrm{Re}\left(\tilde{\theta}_1^N(z)-\tilde{\theta}_{l_1}^N(z)\right)>0,\quad
z\in(0,\lambda_1^N),\\
&\mathrm{Re}\left(\tilde{\theta}_1^N(z)-\tilde{\theta}_{l_2}^N(z)\right)>0,\quad
z\in(\lambda_2^N,\infty).
\end{split}
\end{equation}
where $l_1=2$, $l_2=3$ for $a>1$ and $l_1=3$, $l_2=2$ for $a<1$.
\item Let $x^{\ast}$ be the intersection point between $\mathcal{C}$ and $\mathbb{R}$.
Then in a neighborhood $D_{x^{\ast}}$ of $x^{\ast}$, we have
\begin{equation}\label{eq:sizex}
\begin{split}
\mathrm{Re}\left(\tilde{\theta}_1^N(z)-\tilde{\theta}_{l_1,-}^N(z)\right)<0,\quad
z\in D_{x^{\ast}}\cap\mathcal{C}.
\end{split}
\end{equation}
where the `$+$' and `$-$' subscripts denote the boundary values at
the left and right hand sides of $\mathcal{C}$.
\item On $\mathcal{C}$, we have
\begin{equation}\label{eq:sizeC}
\begin{split}
\mathrm{Re}\left(\tilde{\theta}_{l_1,+}^N(z)-\tilde{\theta}_{l_1,-}^N(z)\right)<0,\quad
z\in \mathcal{C}.
\end{split}
\end{equation}
where the `$+$' and `$-$' subscripts denote the boundary values at
the left and right hand sides of $\mathcal{C}$.
\end{enumerate}
\end{lemma}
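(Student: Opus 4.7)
The plan is to reduce each assertion to a sign analysis based on three tools: the relation $\tfrac{d}{dz}\tilde{\theta}_j^N(z) = \xi_j^N(z)$ on regions of analyticity, the ordering of the real branches $\xi_j^N$ already established in the proof of Proposition \ref{pro:sheet}, and the fact that the harmonic function $\mathrm{Re}(\tilde{\theta}_2^N - \tilde{\theta}_3^N) = \mathrm{Re}(\theta_2^N - \theta_3^N)$ has constant sign in each component of $\mathbb{C}\setminus\mathfrak{H}$ by the maximum principle. A preliminary step uses the expansions (\ref{eq:asymtheta}) to compute
\begin{equation*}
\mathrm{Re}\bigl(\tilde{\theta}_2^N(z)-\tilde{\theta}_3^N(z)\bigr)=z\bigl(\tfrac{1}{a}-1\bigr)+c_N(1-2\beta_N)\log|z|+O(1)\quad\text{as }z\to\pm\infty,
\end{equation*}
and since $\mathfrak{H}_\infty^{\pm}$ approach infinity parallel to the imaginary axis (Proposition \ref{pro:shapeH}), the far real axis lies in $\Omega_R$ (at $+\infty$) and $\Omega_L$ (at $-\infty$), so this pins down the sign in both unbounded regions. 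The convention for $l_1,l_2$ in the lemma is precisely the one that makes $\mathrm{Re}(\tilde{\theta}_{l_2}^N-\tilde{\theta}_{l_1}^N)>0$ on $\Omega_R$ and $<0$ on $\Omega_L$ for both $a<1$ and $a>1$.

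For Part 1, I first handle the sharper inequalities on $(0,\lambda_1^N)$ and $(\lambda_2^N,\infty)$, where all three $\xi_j^N$ are real and analytic. On $(\lambda_2^N,\infty)$, the ordering $\xi_1^N>\xi_{l_2}^N>\xi_{l_1}^N$ inherited from (\ref{eq:xiinfty}) is preserved because no two roots coincide on this interval; Proposition \ref{pro:sheet} combined with the normalization (\ref{eq:thetatilde}) forces $\tilde{\theta}_1^N(\lambda_2^N)=\tilde{\theta}_{l_2}^N(\lambda_2^N)=0$ from the appropriate boundary value, so integrating $\xi_1^N-\xi_{l_2}^N>0$ from $\lambda_2^N$ yields the inequality. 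The parallel argument on $(0,\lambda_1^N)$ uses $\xi_1^N(z)\to-\infty$ as $z\to 0^+$ (hence $\xi_1^N<\xi_{l_1}^N$ there), the matching $\xi_1^N(\lambda_1^N)=\xi_{l_1}^N(\lambda_1^N)=\gamma_1^N$, and integration backward from $\lambda_1^N$. To extend to both $j\in\{2,3\}$ throughout $\mathbb{R}_+\setminus[x_L,x_R]$, I decompose $\tilde{\theta}_1^N-\tilde{\theta}_j^N$ into one of the sharper inequalities above plus a term of the form $\pm(\tilde{\theta}_{l_2}^N-\tilde{\theta}_{l_1}^N)$, whose sign on $(x_R,\infty)\subset\Omega_R$ and $(0,x_L)\subset\Omega_L$ is supplied by the preliminary.

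For Part 3 I use the cut identification $\xi_{l_1,+}^N=\xi_{l_2,-}^N$ on $\mathcal{C}$, which upon integration along $\mathcal{C}$ gives $\tilde{\theta}_{l_1,+}^N(z)-\tilde{\theta}_{l_1,-}^N(z)=\tilde{\theta}_{l_2,-}^N(z)-\tilde{\theta}_{l_1,-}^N(z)$, up to the explicit imaginary additive constant from Lemma \ref{le:cuttheta}. Its real part is $\mathrm{Re}(\tilde{\theta}_{l_2}^N-\tilde{\theta}_{l_1}^N)$ on the right side of $\mathcal{C}$; since $\mathcal{C}$ is chosen between $\mathfrak{H}_L$ and $\mathfrak{H}_R$, this side lies in the region immediately adjacent to $\mathfrak{H}_R$, whose sign is fixed negative by tracking the reversal of $\mathrm{Re}(\theta_2^N-\theta_3^N)$ across $\mathcal{C}$ starting from its positivity in $\Omega_R$. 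For Part 2 I split $\tilde{\theta}_1^N-\tilde{\theta}_{l_1,-}^N=(\tilde{\theta}_1^N-\tilde{\theta}_{l_1,+}^N)+(\tilde{\theta}_{l_1,+}^N-\tilde{\theta}_{l_1,-}^N)$: the second bracket is strictly negative by Part 3, and at $z=x^\ast$ the cut identifications on $\mathfrak{B}_1$ and $\mathfrak{B}_2$ in (\ref{eq:bound}) together with analyticity of $\xi_1^N$ across $\mathcal{C}$ force the first bracket to have vanishing real part, so shrinking $D_{x^\ast}$ keeps the first bracket smaller in absolute value than the second, giving the required strict negativity.

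The main obstacle will be justifying rigorously that the sign of $\mathrm{Re}(\tilde{\theta}_2^N-\tilde{\theta}_3^N)$ determined at infinity propagates throughout each connected component of $\mathbb{C}\setminus\mathfrak{H}$: one must verify via the maximum principle that the harmonic function is non-constant in each component (which follows from the non-vanishing of $\xi_2^N-\xi_3^N$ in the interior, since the only points where these branches coincide are $\lambda_3^N$ and $\lambda_4^N$), and then carefully track the sign across $\mathcal{C}$ for the application to Parts 2 and 3.
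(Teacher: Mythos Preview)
Your proposal is correct and follows essentially the same route as the paper. Both arguments hinge on (i) the ordering of the real branches $\xi_j^N$ from Proposition~\ref{pro:sheet} to get the second and third inequalities in (\ref{eq:size}) by integrating from the endpoint where the normalisations (\ref{eq:thetatilde}) match, (ii) the sign of $\mathrm{Re}(\tilde{\theta}_{l_2}^N-\tilde{\theta}_{l_1}^N)$ in $\Omega_L,\Omega_R,\Omega_1,\Omega_2$ read off from the behaviour at infinity and the structure of $\mathfrak{H}$, and (iii) the jump identifications on $\mathcal{C}$ from Lemma~\ref{le:cuttheta}. Your organisation differs only in that you prove Part~3 first and then deduce Part~2 by splitting $\tilde{\theta}_1^N-\tilde{\theta}_{l_1,-}^N$ into two pieces, whereas the paper treats Part~2 directly at $x^\ast$ using the identity $\mathrm{Re}(\tilde{\theta}_1^N(x^\ast))=\mathrm{Re}(\tilde{\theta}_{l_2,-}^N(x^\ast))$ and the sign in $\Omega_2$; the two arguments are equivalent.

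One small point to tighten in Part~1: your ``sharper inequalities'' live on $(0,\lambda_1^N)$ and $(\lambda_2^N,\infty)$, but the first line of (\ref{eq:size}) is claimed on $(0,x_L)\cup(x_R,\infty)$, and Proposition~\ref{pro:shapeH} only guarantees $x_R>\lambda_1^N$ and $x_L<\lambda_2^N$, so a portion of $(x_R,\infty)$ or $(0,x_L)$ may fall inside $[\lambda_1^N,\lambda_2^N]$. The paper covers this by noting that $\xi_1^N-\xi_{l_j}^N$ is purely imaginary on $\mathfrak{B}_{k_j}$, so $\mathrm{Re}(\tilde{\theta}_1^N-\tilde{\theta}_{l_j}^N)$ stays equal to zero along that portion of the cut; you already invoke exactly this mechanism in Part~2, so just add the same remark here.
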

\begin{proof} From (\ref{eq:cuttheta}), we see that the real parts
of $\tilde{\theta}_j^N(z)$ are continuous in
$\mathbb{R}_+\setminus[\lambda_{1}^N,\lambda_{2}^N]$. Now from the
proof of Proposition \ref{pro:sheet}, we have, for $j=2,3$,
\begin{equation}\label{eq:ineq0}
\begin{split}
\mathrm{Re}\left(\xi_1^N(z)-\xi_j^N(z)\right)&>0,\quad
z\in(\lambda_{2}^N,\infty),\\
\mathrm{Re}\left(\xi_1^N(z)-\xi_j^N(z)\right)&<0,\quad
z\in(0,\lambda_{1}^N).
\end{split}
\end{equation}
From the definitions of $\theta_j^N(z)$ (\ref{eq:theta}) and
$\tilde{\theta}_j^N(z)$ (\ref{eq:thetatilde}), we obtain the second
and the third equations in (\ref{eq:size}). To prove the first
equation in (\ref{eq:size}), note that if $z\in(x_R,\infty)$, then
$z$ is in the region $\Omega_R$ in Figure \ref{fig:zeroset}.
Similarly, if $z\in(0,x_L)$, then $z\in\Omega_L$. By considering the
behavior of $\theta_2^N(z)$ and $\theta_3^N(z)$ at $z=\infty$, we
see that
\begin{equation}\label{eq:outside}
\begin{split}
&\mathrm{Re}\left(\theta_{l_2}^N(z)-\theta_{l_1}^N(z)\right)>0,\quad
z\in(x_R,\infty),\\
&\mathrm{Re}\left(\theta_{l_1}^N(z)-\theta_{l_2}^N(z)\right)>0,\quad
z\in(0,x_L).
\end{split}
\end{equation}
From Proposition \ref{pro:shapeH}, we have
$(x_L,x_R)\cap[\lambda_1^N,\lambda_2^N]\neq\emptyset$. This implies
$x_L<x^{\ast}<\lambda_2^N$ and $x_R>x^{\ast}>\lambda_1^N$.
Therefore, by the second, third equations in (\ref{eq:size}), Lemma
\ref{le:diff23}, (\ref{eq:outside}) and the fact that
$\xi_1^N(z)-\xi_{l_j}^N(z)$ is purely imaginary on
$\mathfrak{B}_{j}$, we obtain
\begin{equation*}
\begin{split}
&\mathrm{Re}\left(\tilde{\theta}_{1}^N(z)-\tilde{\theta}_{l_1}^N(z)\right)>
\mathrm{Re}\left(\tilde{\theta}_{1}^N(z)-\tilde{\theta}_{l_2}^N(z)\right)\geq
0,\quad
z\in(x_R,\infty),\\
&\mathrm{Re}\left(\tilde{\theta}_{1}^N(z)-\tilde{\theta}_{l_2}^N(z)\right)>
\mathrm{Re}\left(\tilde{\theta}_{1}^N(z)-\tilde{\theta}_{l_1}^N(z)\right)\geq
0,\quad z\in(0,x_L).
\end{split}
\end{equation*}
This, together with the second and the third equations in
(\ref{eq:size}), give the first equation in (\ref{eq:size}).

We will now prove (\ref{eq:sizex}). Let us consider the left hand
sides of (\ref{eq:sizex}) at the point $x^{\ast}$. Since
$\xi_1^N(z)-\xi_{l_j}^N(z)$ is purely imaginary on
$\mathfrak{B}_{j}$, we have
\begin{equation}\label{eq:ima}
\begin{split}
\int_{x^{\ast}}^{\lambda_{j}^N}\mathrm{Re}\left(\xi_1^N(x)-\xi_{l_j}^N(x)\right)dx=0.
\end{split}
\end{equation}
where the integration is performed along the real axis. Therefore we
have
\begin{equation}\label{eq:realx0}
\begin{split}
\mathrm{Re}\left(\tilde{\theta}_{l_1,+}^N(x^{\ast})\right)=\mathrm{Re}\left(\tilde{\theta}_{l_2,-}^N(x^{\ast})\right)
=\mathrm{Re}\left(\tilde{\theta}_1^N(x^{\ast})\right)
\end{split}
\end{equation}
where the `$+$' and `$-$' subscripts indicate the boundary values on
the left and right hand sides of $\mathcal{C}$. Now note that
$\tilde{\theta}_{l_1,+}^N(x^{\ast})$ is evaluated in the region
$\Omega_1$ of Figure \ref{fig:zeroset} and
$\tilde{\theta}_{l_2,-}^N(x^{\ast})$ is evaluated in the region
$\Omega_2$. Again, by considering the behavior of
$\tilde{\theta}_2^N(z)$ and $\tilde{\theta}_3^N(z)$ at $z=\infty$
and using the fact that
$\mathrm{Re}\left(\tilde{\theta}_2^N-\tilde{\theta}_3^N\right)$ can
only change signs across the sets $\mathfrak{H}$ and $\mathcal{C}$,
we obtain
\begin{equation}\label{eq:outside1}
\begin{split}
\pm\mathrm{Re}\left(\tilde{\theta}_{l_2,\pm}^N(x^{\ast})-\tilde{\theta}_{l_1,\pm}^N(x^{\ast})\right)>0.
\end{split}
\end{equation}
Then from (\ref{eq:realx0}) and (\ref{eq:outside1}), we obtain
(\ref{eq:sizex}) at $z=x^{\ast}$. The statement for $z\in
D_{x^{\ast}}\cap\mathcal{C}$ now follows from the continuities of
the functions in (\ref{eq:sizex}) along $\mathcal{C}$.

Finally, note that inside the set $\Omega_2$ between $\mathcal{C}$
and $\Xi_R$, (See Figure \ref{fig:zeroset}) we have the following
inequalities.
\begin{equation}\label{eq:O2}
\begin{split}
\mathrm{Re}\left(\tilde{\theta}_{l_1}^N(z)-\tilde{\theta}_{l_2}^N(z)\right)>0,\quad
z\in \Omega_2,
\end{split}
\end{equation}
by considering $z\in\Omega_2$ on the right hand side of
$\mathcal{C}$ in (\ref{eq:O2}) and making use of the jump conditions
between $\tilde{\theta}_2^N$ and $\tilde{\theta}_3^N$ in
(\ref{eq:cuttheta}), we obtain (\ref{eq:sizeC}).
\end{proof}
The final result in this section deals with the behavior of these
real parts in a neighborhood of the interval
$[\lambda_1^N,\lambda_2^N]$.
\begin{lemma}\label{le:lens}
The open intervals $(\lambda_{1}^N,x^{\ast})$ and
$(x^{\ast},\lambda_2^N)$ each has a neighborhood $U_1$ and $U_2$ in
the complex plane such that
\begin{equation}\label{eq:lens}
\mathrm{Re}\left(\tilde{\theta}_{j}^N(z)-\tilde{\theta}_1^N(z)\right)>0,\quad
z\in U_{k_j},\quad j=2,3.
\end{equation}
where $k_2=1$, $k_3=2$ for $a>1$ and $k_2=2$, $k_3=1$ for $a<1$ and
$\tilde{\theta}_j^N(z)$ are defined in (\ref{eq:thetatilde}).
\end{lemma}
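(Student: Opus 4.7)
The plan is to exploit the fact that on the open interval $\mathfrak{B}_{k_j}$ the function $g_j(z) := \tilde{\theta}_j^N(z) - \tilde{\theta}_1^N(z)$ has vanishing real part with a non-zero outward normal derivative, so a standard lens argument then yields the desired two-sided neighborhood. The first step is to show that $\mathrm{Re}(g_j) \equiv 0$ on $\mathfrak{B}_{k_j}$. From (\ref{eq:bound}) we have $\xi_{1,\pm}^N(x) = \xi_{j,\mp}^N(x)$ for $x \in \mathfrak{B}_{k_j}$, and combining this with the observation (from the proof of Lemma \ref{le:real23}) that exactly one of the three branches of (\ref{eq:curveN}) remains real on the interior of $[\lambda_1^N,\lambda_2^N]$, we conclude that $\xi_1^N$ and $\xi_j^N$ form the complex-conjugate pair: $\xi_{j,+}^N(x) = \overline{\xi_{1,+}^N(x)}$. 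Hence $\mathrm{Re}(\xi_j^N - \xi_1^N) \equiv 0$ along $\mathfrak{B}_{k_j}$. Integrating along the real axis and invoking the identity $\theta_{l_1,+}^N(\lambda_1^N) - \theta_{l_2,+}^N(\lambda_2^N) - \theta_{1,-}^N(\lambda_1^N) = 0$ established in Lemma \ref{le:diff23}, one checks that the base-point shifts built into (\ref{eq:thetatilde}) are purely imaginary, so $\mathrm{Re}(g_{j,\pm}(x)) = 0$ for $x \in \mathfrak{B}_{k_j}$.

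Next I would compute the normal derivative of $\mathrm{Re}(g_j)$ at the cut using the Cauchy-Riemann equations. Since $g_j$ is analytic in each of the upper and lower half planes adjacent to $\mathfrak{B}_{k_j}$ with derivative $\xi_j^N - \xi_1^N$, and since $\mathrm{Im}(\xi_{1,+}^N(x)) = \pi \rho_N(x)$ by the Plemelj formula (where $\rho_N$ denotes the continuous part of the density of $\hat{F}_N$, strictly positive on the interior of $[\lambda_1^N,\lambda_2^N]$), we obtain
\begin{equation*}
\partial_y \mathrm{Re}(g_j)\big|_{y=0^\pm} \;=\; -\mathrm{Im}\left(\xi_{j,\pm}^N(x) - \xi_{1,\pm}^N(x)\right) \;=\; \mp\, 2\pi \rho_N(x).
\end{equation*}
Combined with the vanishing of $\mathrm{Re}(g_j)$ on the cut, a first-order Taylor expansion yields
\begin{equation*}
\mathrm{Re}(g_j(x + iy)) \;=\; 2\pi \rho_N(x)\, |y| + O(y^2), \qquad y \to 0,
\end{equation*}
uniformly for $x$ in any compact subinterval of the open $\mathfrak{B}_{k_j}$.

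Finally, $U_{k_j}$ is assembled as the union, over compact subintervals exhausting $\mathfrak{B}_{k_j}$, of thin two-sided strips (excluding the cut itself), with height shrinking near the endpoints $\lambda_1^N, \lambda_2^N$ (since $\rho_N$ vanishes like a square root there by Remark \ref{re:sqrt}) and near $x^\ast$ (so as to stay clear of the transversal branch cut $\mathcal{C}$). The main obstacle is the bookkeeping in the first step: ensuring via Lemma \ref{le:diff23} that the constants in (\ref{eq:thetatilde}) really make $\mathrm{Re}(g_j)$ vanish — rather than merely be a non-zero constant — on $\mathfrak{B}_{k_j}$. Once this is granted, the remainder is a routine lens construction.
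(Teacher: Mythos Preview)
Your approach is essentially the same as the paper's: both show that $\mathrm{Re}(\tilde{\theta}_j^N-\tilde{\theta}_1^N)$ vanishes on $\mathfrak{B}_{k_j}$ (because $\xi_1^N$ and $\xi_j^N$ are complex conjugates there, and the shifts in (\ref{eq:thetatilde}) are arranged so that the constant of integration is purely imaginary), and then use the Cauchy--Riemann equations together with $\mathrm{Im}\,\xi_{1,+}^N=\pi\rho_N>0$ to push the real part in the right direction off the cut. One small slip: your displayed normal derivative has the wrong sign (it should read $\partial_y\mathrm{Re}(g_j)\big|_{y=0^\pm}=\pm 2\pi\rho_N(x)$, not $\mp$), though your final Taylor expansion and conclusion are stated correctly; also, the relevant bookkeeping for the constants comes directly from the definition (\ref{eq:thetatilde}) rather than from Lemma~\ref{le:diff23}, which concerns $\tilde{\theta}_2^N-\tilde{\theta}_3^N$.
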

\begin{proof}Since $\xi_1^N(z)-\xi_{j}^N(z)$ is purely imaginary in
$\mathfrak{B}_{k_j}$, we have
\begin{equation}\label{eq:boundvalue}
\begin{split}
\int_{\lambda_{k_j}^N}^{z}\mathrm{Re}\left(\xi_1^N(x)-\xi_{j}(x)\right)dx=0,\quad
z\in \mathfrak{B}_{k_j},\quad j=1,2,
\end{split}
\end{equation}
where $\mathfrak{B}_{k_j}$ is defined in (\ref{eq:frakB}).

Let
$\mathfrak{B}_{k_j}^0=\mathfrak{B}_{k_j}\setminus\{\lambda_{k_j}^N\}$.
Then on the positive and negative sides of $\mathfrak{B}_{k_j}^0$,
the derivatives of the functions
$\tilde{\theta}_{1,\pm}^N(z)-\tilde{\theta}_{j,\pm}^N(z)$ are given
by $\xi_{1,\pm}^N(z)-\xi_{j,\pm}^N(z)$ and are purely imaginary. In
fact, since $\xi_{1,+}(z)=m_{\hat{F}_N}$, we see that
$\xi_{1,+}^N(z)-\xi_{j,+}^N(z)=2\pi i\rho_N(z)$ where $\rho_N(z)>0$
is the density function of $\hat{F}_N$. On the other hand, by the
jump discontinuities (\ref{eq:bound}), we see that
$\xi_{1,-}^N(z)-\xi_{j,-}^N(z)=-2\pi i\rho_N(z)$. Hence by the
Cauchy Riemann equation, the real part of
$\tilde{\theta}_1^N(z)-\tilde{\theta}_{j}^N(z)$ is decreasing as we
move from $\mathfrak{B}_{k_j}^0$ into the upper half plane. From
(\ref{eq:boundvalue}) and (\ref{eq:thetatilde}), we see that
$\mathrm{Re}\left(\tilde{\theta}_1^N(z)-\tilde{\theta}_{j}^N(z)\right)<0$
for $z$ in the upper half plane near $\mathfrak{B}_{k_j}^0$.
Similarly, we also have
$\mathrm{Re}\left(\tilde{\theta}_1^N(z)-\tilde{\theta}_{j}^N(z)\right)<0$
for $z$ in the lower half plane near $\mathfrak{B}_{k_j}^0$. This
implies (\ref{eq:lens}) is true in a neighborhood $U_{k_j}$ of
$\mathfrak{B}_{k_j}^0$.
\end{proof}
\section{Riemann-Hilbert analysis}\label{se:RHP}
We can now implement the Riemann-Hilbert method to obtain the strong
asymptotics for the multiple Laguerre polynomials introduced in
Section \ref{se:MOP} and use it to prove Theorem \ref{thm:main2}.
The analysis is very similar to those in \cite{BKext2} (See also
\cite{Lysov}).

Let $C(f)$ be the Cauchy transform of the function $f(z)\in
L^2(\mathbb{R}_+)$ in $\mathbb{R}_+$
\begin{equation}\label{eq:cauchy}
C(f)(z)=\frac{1}{2\pi i}\int_{\mathbb{R}_+}\frac{f(s)}{s-z}ds,
\end{equation}
and let $w_1(z)$ and $w_2(z)$ be the weights of the multiple
Laguerre polynomials.
\begin{equation}\label{eq:weight}
w_1(z)=z^{M-N}e^{-Mz},\quad w_2(z)=z^{M-N}e^{-Ma^{-1}z},
\end{equation}
Denote by $\kappa_1$ and $\kappa_2$ the constants
\begin{equation*}
\kappa_1=-2\pi
i\left(h^{(1)}_{N_0-1,N_1}\right)^{-1},\quad\kappa_2=-2\pi
i\left(h^{(2)}_{N_0,N_1-1}\right)^{-1}.
\end{equation*}
Then due to the orthogonality condition (\ref{eq:multiop}), the
following matrix
\begin{equation}\label{eq:Ymatr}
Y(z)=\begin{pmatrix}P_{N_0,N_1}(z)&C(P_{N_0,N_1}w_1)(z)&C(P_{N_0,N_1}w_2)(z)\\
\kappa_1P_{N_0-1,N_1}(z)&\kappa_1C(P_{N_0-1,N_1}w_1)(z)&\kappa_1C(P_{N_0-1,N_1}w_2)(z)\\
\kappa_2P_{N_0,N_1-1}(z)&\kappa_2C(P_{N_0,N_1-1}w_1)(z)&\kappa_2C(P_{N_0,N_1-1}w_2)(z)
\end{pmatrix}
\end{equation}
is the unique solution of the following Riemann-Hilbert problem.
\begin{equation}\label{eq:RHPY}
\begin{split}
1.\quad &\text{$Y(z)$ is analytic in
$\mathbb{C}\setminus\mathbb{R}_+$},\\
2.\quad &Y_+(z)=Y_-(z)\begin{pmatrix}1&w_1(z)&w_2(z)\\
0&1&0\\
0&0&1
\end{pmatrix},\quad z\in\mathbb{R}_+\\
3.\quad &Y(z)=\left(I+O(z^{-1})\right)\begin{pmatrix}z^{N}&0&0\\
0&z^{-N_0}&0\\
0&0&z^{-N_1}
\end{pmatrix},\quad z\rightarrow\infty,\\
4.\quad  &Y(z)=O(1),\quad z\rightarrow 0.
\end{split}
\end{equation}
By a similar computation as the one in \cite{BKMOP} and \cite{BK1},
we can express the kernel (\ref{eq:ker}) in terms of the solution of
the Riemann-Hilbert problem $Y(z)$.
\begin{equation}\label{eq:kerRHP}
\begin{split}
K_{M,N}(x,y)&=\frac{(xy)^{\frac{M-N}{2}}\left(e^{-My}\left[Y^{-1}_+(y)Y_+(x)\right]_{21}
+e^{-Ma^{-1}y}\left[Y^{-1}_+(y)Y_+(x)\right]_{31}\right)}{2\pi
i(x-y)},\\
&=\frac{(xy)^{\frac{M-N}{2}}}{2\pi i(x-y)}\left(0\quad e^{-My}\quad
e^{-Ma^{-1}y}\right)Y_+^{-1}(y)Y_+(x)\begin{pmatrix} 1 \\ 0\\0
\end{pmatrix}
\end{split}
\end{equation}
where $A_{21}$ and $A_{31}$ are the $21^{th}$ and $31^{th}$ entries
of $A$.
\subsection{First transformation of the Riemann-Hilbert problem}
We should now use the functions $\tilde{\theta}_j^N(z)$ in
(\ref{eq:thetatilde}) to deform the Riemann-Hilbert problem
(\ref{eq:RHPY}). Our goal is to deform the Riemann-Hilbert problem
so that it can be approximated by a Riemann-Hilbert problem that is
explicitly solvable. As in \cite{BKext2} and \cite{Lysov}, the set
$\mathfrak{H}$ in (\ref{eq:frakH}) will be important to the
construction.

We will now start deforming the Riemann-Hilbert problem
(\ref{eq:RHPY}). First let us define the functions $g_j^N(z)$ to be
\begin{equation}\label{eq:g}
\begin{split}
g_1^N(z)&=\tilde{\theta}_1^N(z)+(1-c_N)\log z,\quad
g_2^N(z)=\tilde{\theta}_2^N(z)+z,
\\
g_3^N(z)&=\tilde{\theta}_3^N(z)+\frac{z}{a}.
\end{split}
\end{equation}
where $\tilde{\theta}_j^N(z)$ is defined in (\ref{eq:thetatilde})
and the branch cut of $\log z$ in $g_1^N(z)$ is chosen to be the
negative real axis.

We then define $T(z)$ to be
\begin{equation}\label{eq:Tz}
T(z)=diag\left(e^{-Ml_1^N},e^{-M\tilde{l}_2^N},e^{-M\tilde{l}_3^N}\right)Y(z)
diag\left(e^{Mg_1^N(z)},e^{Mg_2^N(z)},e^{Mg_3^N(z)}\right),
\end{equation}
where $\tilde{l}_2^N$ and $\tilde{l}_3^N$ are given by
\begin{equation*}
\begin{split}
\tilde{l}_2^N&=\left\{
                \begin{array}{ll}
                  l_2^N-\theta_{2,+}^N(\lambda_1^N)+\theta_{1,-}^N(\lambda^N_{1}), & \hbox{$a>1$;} \\
                  l_2^N-\theta_{2,+}^N(\lambda_2^N), & \hbox{$a<1$.}
                \end{array}
              \right.\\
\tilde{l}_3^N&=\left\{\begin{array}{ll}
              l_3^N(z)-\theta_{3,+}^N(\lambda^N_{2}), & \hbox{$a>1$;} \\
              l_3^N(z)-\theta_{3,+}^N(\lambda_1^N)+\theta_{1,-}^N(\lambda^N_{1}), & \hbox{$a<1$.}
            \end{array}
          \right.
\end{split}
\end{equation*}

The matrix $T(z)$ will satisfy the following Riemann-Hilbert
problem.
\begin{equation}\label{eq:RHPT}
\begin{split}
1.\quad &\text{$T(z)$ is analytic in
$\mathbb{C}\setminus\left(\mathbb{R}\cup\mathcal{C}\right)$},\\
2.\quad &T_+(z)=T_-(z)J_T(z),\quad z\in\mathbb{R}\cup\mathcal{C},\\
3.\quad &T(z)=I+O(z^{-1}),\quad z\rightarrow\infty,\\
4.\quad  & T(z)=O(1),\quad z\rightarrow 0.
\end{split}
\end{equation}
where $J_T(z)$ is the following matrix
\begin{equation}\label{eq:JT}
\begin{split}
J_T(z)=\begin{pmatrix}e^{M\left(\tilde{\theta}_{1,+}^N(z)-\tilde{\theta}_{1,-}^N(z)\right)}&
e^{M\left(\tilde{\theta}_{2,+}^N(z)-\tilde{\theta}_{1,-}^N(z)\right)}&
e^{M\left(\tilde{\theta}_{3,+}^N(z)-\tilde{\theta}_{1,-}^N(z)\right)}\\
0&e^{M\left(\tilde{\theta}_{2,+}^N(z)-\tilde{\theta}_{2,-}^N(z)\right)}&0\\
0&0&e^{M\left(\tilde{\theta}_{3,+}^N(z)-\tilde{\theta}_{3,-}^N(z)\right)}
\end{pmatrix},
\end{split}
\end{equation}
By applying Lemma \ref{le:cuttheta} to the $\tilde{\theta}_j^N(z)$,
we can simplify the jump matrix $J_T(z)$. In particular, on
$\mathfrak{B}_{k_2}$ in (\ref{eq:frakB}), we have
\begin{equation}\label{eq:JTk2}
\begin{split}
J_T(z)&=\begin{pmatrix}e^{M\left(\tilde{\theta}_1^N(z)-\tilde{\theta}_2^N(z)\right)_+}&
1&
e^{M\left(\tilde{\theta}_{3,+}^N(z)-\tilde{\theta}_{1,-}^N(z)\right)}\\
0&e^{M\left(\tilde{\theta}_1^N(z)-\tilde{\theta}_2^N(z)\right)_-}&0\\
0&0&1
\end{pmatrix},
\end{split}
\end{equation}
while on $\mathfrak{B}_{k_3}$, we have
\begin{equation}\label{eq:JTk3}
\begin{split}
J_T(z)&=\begin{pmatrix}e^{M\left(\tilde{\theta}_1^N(z)-\tilde{\theta}_3^N(z)\right)_+}&
e^{M\left(\tilde{\theta}_{2,+}^N(z)-\tilde{\theta}_{1,-}^N(z)\right)}&
1\\
0&1&0\\
0&0&e^{M\left(\tilde{\theta}_1^N(z)-\tilde{\theta}_3^N(z)\right)_-}
\end{pmatrix},
\end{split}
\end{equation}
On the rest of the positive real axis, the jump matrix becomes
\begin{equation}\label{eq:JTrest}
\begin{split}
J_T(z)&=\begin{pmatrix}1&
e^{M\left(\tilde{\theta}_{2,+}^N(z)-\tilde{\theta}_{1,-}^N(z)\right)}&
e^{M\left(\tilde{\theta}_{3,+}^N(z)-\tilde{\theta}_{1,-}^N(z)\right)}\\
0&1&0\\
0&0&1
\end{pmatrix}.
\end{split}
\end{equation}
This is because $Mc_N\beta_N=N_1$, $Mc_N(1-\beta_N)=N_0$ and
$Mc_N=N$ are all integers. And on the negative real axis, the matrix
$T(z)$ has no jump for the same reason. Note that the jump matrix
$J_T(z)$ is continuous at $z=0$ as the off-diagonal entries of
(\ref{eq:JTrest}) contain the factor $e^{-M\theta_{1,-}^N(z)}$ which
vanishes at the origin.

The jump matrix on $\mathcal{C}$ is given by
\begin{equation}\label{eq:JTC}
\begin{split}
J_T(z)&=\begin{pmatrix}1& 0&0\\
0&e^{M\left(\tilde{\theta}_{2,+}^N(z)-\tilde{\theta}_{2,-}^N(z)\right)}&0\\
0&0&e^{M\left(\tilde{\theta}_{3,+}^N(z)-\tilde{\theta}_{3,-}^N(z)\right)}
\end{pmatrix}.
\end{split}
\end{equation}
The Riemann-Hilbert problem for $T(z)$ now takes same form as the
one in \cite{BKext2} (See also \cite{Lysov}) and the techniques
developed there can now be applied to our problem.

\subsection{Lens opening and approximation of the Riemann-Hilbert
problem}

We will now apply the global lens opening technique developed in
\cite{BKext2}. We will have to use the properties of the set
$\mathfrak{H}$ in (\ref{eq:frakH}) to define the global lens
contour.

By Proposition \ref{pro:shapeH}, the set union of the set
$\mathfrak{H}$ and $\mathcal{C}$ divides the complex plane into 4
regions $\Omega_L$, $\Omega_R$, $\Omega_1$ and $\Omega_2$. The
region $\Omega_L$ ($\Omega_R$) is an open region that lies on the
left (right) hand side of the contours $\mathfrak{H}_{\pm}^{\infty}$
and $\mathfrak{H}_L$. ($\mathfrak{H}_R$) The region $\Omega_1$
($\Omega_2$) is the region bounded by $\mathfrak{H}_L$
($\mathfrak{H}_R$) and $\mathcal{C}$. (See Figure \ref{fig:zeroset})

By considering the behavior of $\theta_2^N(z)-\theta_3^N(z)$ near
$z=\infty$ in (\ref{eq:asymtheta}), we see that for $a<1$ ($a>1$),
$\mathrm{Re}\left(\theta_2^N(z)-\theta_3^N(z)\right)$ is negative
(positive) in $\Omega_L$ and positive (negative) in $\Omega_R$. Let
us define the contour $\Xi_L$ ($\Xi_R$) to be a contour from
$\lambda_{4}^N$ to $\lambda_3^N$ in $\Omega_L$ ($\Omega_R$) such
that $\Xi_L$ intersects $\mathbb{R}$ at a point $x_1<0$ and $\Xi_R$
intersects $\mathbb{R}$ at a point $x_2>\lambda_2^N$. (See Figure
\ref{fig:sigma})
\begin{figure}
\centering \psfrag{l1}[][][1][0.0]{\small$\lambda_{1}^N$}
\psfrag{l2}[][][1][0.0]{\small$\lambda_{2}^N$}
\psfrag{l+}[][][1][0.0]{\small$\lambda_3^N$}
\psfrag{l-}[][][1][0.0]{\small$\lambda_4^N$}
\psfrag{iota}[][][1][0.0]{\small$x^{\ast}$}
\psfrag{sigmal}[][][1][0.0]{\small$\Xi_L$}
\psfrag{sigmar}[][][1][0.0]{\small$\Xi_R$}
\includegraphics[scale=0.65]{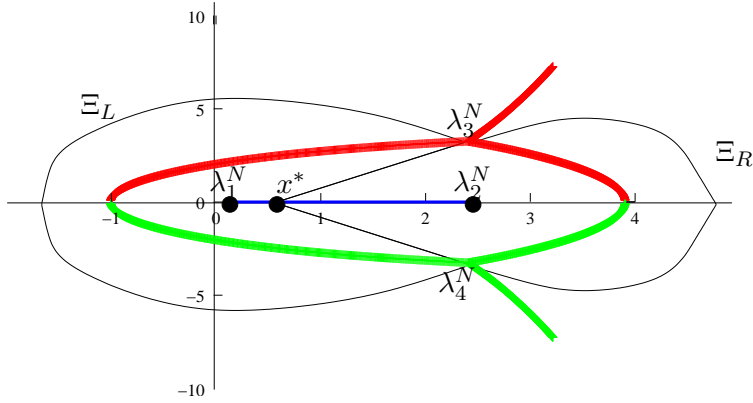}
\caption{The contours $\Xi_L$ and $\Xi_R$.}\label{fig:sigma}
\end{figure}

Let us define the lens contours $\Xi_{\pm}^{j}$, $j=1,2$ around the
branch cut $[\lambda^N_{1},\lambda^N_{2}]$ as follows. The contours
$\Xi_{\pm}^j$ are contours in the neighborhoods $U_j$ in Lemma
\ref{le:lens} joining $\lambda_j^N$ and $\mathcal{C}$ in the
upper/lower half plane. Together with the contours $\Xi_L$ and
$\Xi_R$, the lens contours are depicted in Figure \ref{fig:lens2}.
\begin{figure}
\centering \psfrag{l1}[][][1][0.0]{\small$\lambda_{1}^N$}
\psfrag{l2}[][][1][0.0]{\small$\lambda_{2}^N$}
\psfrag{l+}[][][1][0.0]{\small$\lambda_3^N$}
\psfrag{l-}[][][1][0.0]{\small$\lambda_4^N$}
\psfrag{xi1+}[][][1][0.0]{\small$\Xi_+^1$}
\psfrag{xi1-}[][][1][0.0]{\small$\Xi_-^1$}
\psfrag{xi2+}[][][1][0.0]{\small$\Xi_+^2$}
\psfrag{xi2-}[][][1][0.0]{\small$\Xi_-^2$}
\psfrag{xil}[][][1][0.0]{\small$\Xi_L$}
\psfrag{xir}[][][1][0.0]{\small$\Xi_R$}
\psfrag{x1}[][][1][0.0]{\small$x_1$}
\psfrag{x2}[][][1][0.0]{\small$x_2$}
\psfrag{x0}[][][1][0.0]{\small$x^{\ast}$}
\psfrag{L1+}[][][1][0.0]{\small$L_+^1$}
\psfrag{L2+}[][][1][0.0]{\small$L_+^2$}
\psfrag{L1-}[][][1][0.0]{\small$L_-^1$}
\psfrag{L2-}[][][1][0.0]{\small$L_-^2$}
\psfrag{ol}[][][1][0.0]{\small$\mathbb{D}_1$}
\psfrag{or}[][][1][0.0]{\small$\mathbb{D}_2$}
\includegraphics[scale=0.75]{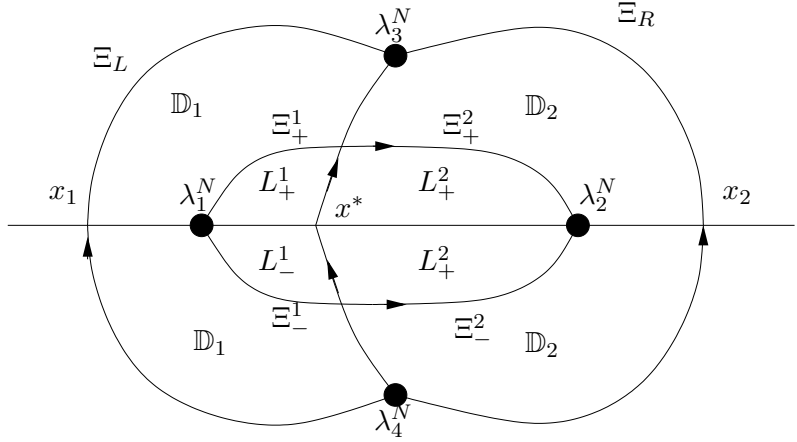}
\caption{The lens contours.}\label{fig:lens2}
\end{figure}
Let the matrices $\mathcal{G}_1$, $\mathcal{G}_2$, $\mathcal{G}_3$,
$\mathcal{G}_4$ be the followings
\begin{equation}\label{eq:Kj}
\begin{split}
\mathcal{G}_1&=\begin{pmatrix}1&0&0\\
0&1&-e^{M\left(\tilde{\theta}_3^N(z)-\tilde{\theta}_2^N(z)\right)}\\
0&0&1
\end{pmatrix},\quad
\mathcal{G}_2=\begin{pmatrix}1&0&0\\
0&1&0\\
0&-e^{M\left(\tilde{\theta}_2^N(z)-\tilde{\theta}_3^N(z)\right)}&1
\end{pmatrix},\\
\mathcal{G}_3&=\begin{pmatrix}1&0&0\\
-e^{M\left(\tilde{\theta}_1^N(z)-\tilde{\theta}_2^N(z)\right)}&1&0\\
0&0&1
\end{pmatrix},\quad
\mathcal{G}_4=\begin{pmatrix}1&0&0\\
0&1&0\\
-e^{M\left(\tilde{\theta}_1^N(z)-\tilde{\theta}_3^N(z)\right)}&0&1
\end{pmatrix}.
\end{split}
\end{equation}
We will now define the matrix $S(z)$ to be $S(z)=T(z)$ outside of
the lens region. Inside the lens region, we define $S(z)$ to be
\begin{equation}\label{eq:S1}
\begin{split}
\textrm{For $a<1$},\quad S(z)&=T(z)\mathcal{G}_2,\quad z\in
\mathbb{D}_1,\quad
S(z)=T(z)\mathcal{G}_1,\quad z\in \mathbb{D}_2,\\
S(z)&=T(z)\mathcal{G}_2\mathcal{G}_4^{\pm 1},\quad z\in
L^1_{\pm},\quad S(z)=T(z)\mathcal{G}_1\mathcal{G}_3^{\pm 1},\quad
z\in L^2_{\pm}.\\
\textrm{For $a>1$},\quad S(z)&=T(z)\mathcal{G}_1,\quad z\in
\mathbb{D}_1,\quad
S(z)=T(z)\mathcal{G}_2,\quad z\in \mathbb{D}_2,\\
S(z)&=T(z)\mathcal{G}_1\mathcal{G}_3^{\pm 1},\quad z\in
L^1_{\pm},\quad S(z)=T(z)\mathcal{G}_2\mathcal{G}_4^{\pm 1},\quad
z\in L^2_{\pm}.
\end{split}
\end{equation}
Let $\Xi$ be the union of the lens contours.
\begin{equation*}
\Xi=\Xi_L\cup\Xi_R\cup\left(\cup_{j=1}^2\Xi_j^{+}\cup\Xi_j^-\right).
\end{equation*}
Then by using Lemma \ref{le:cuttheta}, it is easy to check that
$S(z)$ satisfies the following Riemann-Hilbert problem.
\begin{equation}\label{eq:RHPS}
\begin{split}
1.\quad &\text{$S(z)$ is analytic in
$\mathbb{C}\setminus\left(\mathbb{R}_+\cup\Xi\cup\mathcal{C}\right)$},\\
2.\quad &S_+(z)=S_-(z)J_S(z),\quad z\in\left(\mathbb{R}_+\cup\Xi\cup\mathcal{C}\right),\\
3.\quad &S(z)=I+O(z^{-1}),\quad z\rightarrow\infty,\\
4.\quad  & S(z)=O(1),\quad z\rightarrow 0.
\end{split}
\end{equation}
where the matrix $J_S(z)$ is given by the following. Let
$\mathcal{C}=\cup_{j=0}^2\mathcal{C}_j$ where $\mathcal{C}_0$ is the
boundary between $\mathbb{D}_1$ and $\mathbb{D}_2$, $\mathcal{C}_1$
is the boundary between $L^1_-$ and $L^2_-$ and $\mathcal{C}_2$ is
the boundary between $L^1_+$ and $L^2_+$. Then on $\mathcal{C}$, the
jump matrix $J_S(z)$ is given by
\begin{equation}\label{eq:JSC}
\begin{split}
\textrm{For $a<1$},\quad J_S(z)&=\begin{pmatrix}1&0&0\\
0&0&1\\
0&-1&e^{M\left(\tilde{\theta}_{3,+}^N(z)-\tilde{\theta}_{3,-}^N(z)\right)}
\end{pmatrix},\quad z\in\mathcal{C}_0,\\
J_S(z)&=\begin{pmatrix}1&0&0\\
0&0&1\\
(-1)^{l-1}e^{M\left(\tilde{\theta}_{1}^N(z)-\tilde{\theta}_{3,-}^N(z)\right)}
&-1&e^{M\left(\tilde{\theta}_{3,+}^N(z)-\tilde{\theta}_{3,-}^N(z)\right)}
\end{pmatrix},\\ z&\in\mathcal{C}_l,\quad l=1,2.\\
\textrm{For $a>1$},\quad J_S(z)&=\begin{pmatrix}1&0&0\\
0&e^{M\left(\tilde{\theta}_{2,+}^N(z)-\tilde{\theta}_{2,-}^N(z)\right)}&-1\\
0&1&0
\end{pmatrix},\quad z\in\mathcal{C}_0,\\
J_S(z)&=\begin{pmatrix}1&0&0\\
(-1)^{l-1}e^{M\left(\tilde{\theta}_{1}^N(z)-\tilde{\theta}_{2,-}^N(z)\right)}
&e^{M\left(\tilde{\theta}_{2,+}^N(z)-\tilde{\theta}_{2,-}^N(z)\right)}&-1\\
0 &1&0
\end{pmatrix},\\ z&\in\mathcal{C}_l,\quad l=1,2.
\end{split}
\end{equation}
On the lens contour $\Xi$, the jump $J_S(z)$ is given by
\begin{equation}\label{eq:JSX}
\begin{split}
\textrm{For $a<1$},\quad J_S(z)&=\mathcal{G}_2^{-1},\quad
z\in\Xi_L,\quad J_S(z)=\mathcal{G}_1,\quad z\in\Xi_R,\\
J_S(z)&=\mathcal{G}_4^{-1},\quad z\in\Xi_{1}^{\pm},\quad
J_S(z)=\mathcal{G}_3^{-1},\quad z\in\Xi_{2}^{\pm},\\
\textrm{For $a>1$},\quad J_S(z)&=\mathcal{G}_1^{-1},\quad
z\in\Xi_L,\quad J_S(z)=\mathcal{G}_2,\quad z\in\Xi_R,\\
J_S(z)&=\mathcal{G}_3^{-1},\quad z\in\Xi_{1}^{\pm},\quad
J_S(z)=\mathcal{G}_4^{-1},\quad z\in\Xi_{2}^{\pm}.
\end{split}
\end{equation}
Finally, let $\mathbb{R}_{j}$ be the following.
\begin{equation}\label{eq:Rj}
\mathbb{R}_{1}=(0,\lambda_1^N),\quad\mathbb{R}_2=(\lambda_2^N,x_2).
\end{equation}
Then on the real axis, the jumps of $S(z)$ are given by
\begin{equation}\label{eq:JSreal}
\begin{split}
J_S(z)&=\begin{pmatrix}1&e^{M\left(\tilde{\theta}_{2,+}^N(z)-\tilde{\theta}_{1,-}^N(z)\right)}&0\\
0&1&0\\
0&0&1\end{pmatrix},\quad z\in\mathbb{R}_{k_2},\\
J_S(z)&=\begin{pmatrix}1&0&e^{M\left(\tilde{\theta}_{3,+}^N(z)-\tilde{\theta}_{1,-}^N(z)\right)}\\
0&1&0\\
0&0&1\end{pmatrix},\quad z\in\mathbb{R}_{k_3},\\
J_S(z)&=\begin{pmatrix}0&1&0\\
-1&0&0\\
0&0&1\end{pmatrix},\quad z\in\mathfrak{B}_{k_2},\quad
J_S(z)=\begin{pmatrix}0&0&1\\
0&1&0\\
-1&0&0\end{pmatrix},\quad z\in\mathfrak{B}_{k_3}.
\end{split}
\end{equation}
where $k_2=1$, $k_3=2$ for $a>1$ and $k_2=2$, $k_3=1$ for $a<1$. On
the rest of the positive real axis, the jump is given by
(\ref{eq:JTrest}).

By considering the behavior of $\theta_2^N(z)-\theta_3^N(z)$ near
$z=\infty$ in (\ref{eq:asymtheta}) and by making use of Lemma
\ref{le:diff23}, we see that for $a<1$ ($a>1$),
$\mathrm{Re}\left(\tilde{\theta}_2^N(z)-\tilde{\theta}_3^N(z)\right)$
is negative (positive) in $\Xi_L$ and positive (negative) in
$\Xi_R$. Hence, from (\ref{eq:JSX}), we see that the off-diagonal
entries in $J_S(z)$ become exponentially small on $\Xi_L$ and
$\Xi_R$ as $M\rightarrow\infty$ outside of some small neighborhoods
$D_{k}$ around the points $\lambda_{k}^N$. By Lemma \ref{le:lens},
we see that, away from the points $\lambda_k^N$, the jump matrices
on the lens contours $\Xi_1^{\pm}$ and $\Xi_2^{\pm}$ are all
exponentially close to the identity matrix as $M\rightarrow\infty$.
By (\ref{eq:size}) and the fact that $x_2>x_R$, it follows that
$J_S(z)$ are also exponentially close to the identity matrix on
$\mathbb{R}_+\setminus\left([\lambda_1^N,\lambda_2^N]\cup D_1\cup
D_2\right)$ as $M\rightarrow\infty$. Now by (\ref{eq:sizex}), we see
that the for $a<1$, the $31^{th}$ entry of $J_S(z)$ in
$\mathcal{C}_1\cup\mathcal{C}_2$ (\ref{eq:JSC}) is exponentially
small in $M$, while for $a>1$, the $21^{th}$ entry of $J_S(z)$ in
$\mathcal{C}_1\cup\mathcal{C}_2$ is exponentially small in $M$.
Finally, by (\ref{eq:sizeC}), we see that for $a<1$, the $33^{th}$
entry of $J_S(z)$ in $\mathcal{C}$ is also exponentially small in
$M$, while for $a>1$, the $22^{th}$ entry of $J_S(z)$ in
$\mathcal{C}$ is exponentially small in $M$. This suggests the
following approximation to the Riemann-Hilbert problem
(\ref{eq:RHPS}).
\begin{equation}\label{eq:RHPSinf}
\begin{split}
1.\quad &\text{$S^{\infty}(z)$ is analytic in
$\mathbb{C}\setminus\left([\lambda_{1}^N,\lambda_{2}^N]\cup\mathcal{C}\right)$},\\
2.\quad &S_+^{\infty}(z)=S_-^{\infty}(z)J_{\infty}(z),\quad z\in[\lambda_{1}^N,\lambda_{2}^N]\cup\mathcal{C},\\
3.\quad &S^{\infty}(z)=I+O(z^{-1}),\quad z\rightarrow\infty,\\
4.\quad
&S^{\infty}(z)=O\left((z-\lambda_j^N)^{-\frac{1}{4}}\right),\quad
z\rightarrow\lambda_j^N, \quad j=1,\ldots, 4.
\end{split}
\end{equation}
where the matrix $J_{\infty}(z)$ is the same as $J_S(z)$ on
$[\lambda_{1}^N,\lambda_{2}^N]$ and on $\mathcal{C}$, it is given by
\begin{equation}\label{eq:Jinfty}
\begin{split}
\textrm{For $a<1$},\quad J_{\infty}(z)&=\begin{pmatrix}1&0&0\\
0&0&1\\
0&-1&0
\end{pmatrix},\quad z\in\mathcal{C},\\
\textrm{For $a>1$},\quad J_{\infty}(z)&=\begin{pmatrix}1&0&0\\
0&0&-1\\
0&1&0
\end{pmatrix},\quad z\in\mathcal{C}.
\end{split}
\end{equation}
In the next section we will give an explicit solution to this
Riemann-Hilbert problem and we will eventually show that
$S^{\infty}(z)$ is a good approximation of $S(z)$ when $z$ is
outside of the small neighborhood $D_k$ of the branch point
$\lambda_k^N$.
\subsection{Outer parametrix}\label{se:outer}
Let $\Lie_N$ be the Riemann surface defined by (\ref{eq:curveN}) and
let $\Gamma_j$ be the images of $\mathfrak{B}_j$ on $\Lie_N$ under
the map $\xi_{1,+}^N(z)$. That is
\begin{equation}\label{eq:Gammaj}
\Gamma_j=\left\{(z,\xi)\in\Lie_N|\quad \xi=\xi_{1,+}^N(z),\quad
z\in\mathfrak{B}_j.\right\},\quad j=1,2.
\end{equation}
Similarly, we define $\Gamma_c$ to be the image of $\mathcal{C}$.
\begin{equation}\label{eq:Gammac}
\begin{split}
\Gamma_c=\left\{(z,\xi)\in\Lie_N|\quad \xi=\xi_{l_2,+}^N(z),\quad
z\in\mathcal{C}.\right\},
\end{split}
\end{equation}
where $l_1=2$, $l_2=3$ when $a>1$ and $l_1=3$, $l_2=2$ when $a<1$.

Let us now define the functions $S^{\infty}_k(\xi)$, $k=1,2,3$ to be
the following functions on $\Lie_N$.
\begin{equation}\label{eq:Sinfk}
\begin{split}
S^{\infty}_1(\xi)&=a\sqrt{\prod_{j=1}^4\gamma_j^N}\frac{(\xi+1)(\xi+a^{-1})}{\sqrt{\prod_{j=1}^4(\xi-\gamma_j^N)}},\\
S^{\infty}_2(\xi)&=\frac{a\sqrt{\prod_{j=1}^4(1+\gamma_j^N)}}{a-1}\frac{\xi(\xi+a^{-1})}{\sqrt{\prod_{j=1}^4(\xi-\gamma_j^N)}},\\
S^{\infty}_3(\xi)&=\frac{\sqrt{\prod_{j=1}^4(1+a\gamma_j^N)}}{1-a}\frac{\xi(\xi+1)}{\sqrt{\prod_{j=1}^4(\xi-\gamma_j^N)}}.
\end{split}
\end{equation}
where $\gamma_k^N$ are the roots of polynomial (\ref{eq:quartN}).
The branch cuts of the square root in (\ref{eq:Sinfk}) are chosen to
be the contours $\Gamma_1$, $\Gamma_2$ (\ref{eq:Gammaj}) and
$\Gamma_c$ (\ref{eq:Gammac}) in $\Lie^N$.

By using the asymptotic behavior of the functions $\xi_m^N(z)$
(\ref{eq:xiinfty}) and (\ref{eq:zeroasym}), with $c$ and $\beta$
replaced by $c_N$ and $\beta_N$, we see that the all the functions
$S^{\infty}_k(\xi)$ are holomorphic near $\xi_m^N(0)$ for $m=1,2,3$.
Moreover, at the points $\xi_m^N(\infty)$, these functions satisfy
\begin{equation}\label{eq:sasym}
\begin{split}
S^{\infty}_k(\xi_m^N(\infty))=\delta_{mk},\quad k,m=1,2,3.
\end{split}
\end{equation}
Let us define $S^{\infty}(z)$ to be the following matrix on
$z\in\mathbb{C}$.
\begin{equation}\label{eq:sinf}
\left(S^{\infty}(z)\right)_{im}=S^{\infty}_i(\xi_m^N(z)),\quad 1\leq
i,m\leq 3.
\end{equation}
Then, since the branch cut of the square root in (\ref{eq:Sinfk})
are chosen to be $\Gamma_j$ and $\Gamma_c$, we have, from the jump
discontinuities of the $\xi_m^N(z)$ (\ref{eq:bound}), the following
\begin{equation}\label{eq:Sjump}
\begin{split}
S^{\infty}_i(\xi_{1,\pm}^N(z))&=\mp
S^{\infty}_i(\xi_{l,\mp}^N(z)),\quad
z\in\mathfrak{B}_{k_l},\quad i=1,2,3,\quad l=2,3,\\
S^{\infty}_i(\xi_{l_2,{\pm}}^N(z))&=\mp
S^{\infty}_i(\xi_{l_1,\mp}^N(z)),\quad z\in\mathcal{C},\quad
i=1,2,3.
\end{split}
\end{equation}
From this and the asymptotic behavior (\ref{eq:sasym}) of the
$S^{\infty}_i(\xi)$ at $z=\infty$ and its behavior at the branch
points $\lambda_j^N$, we see that the matrix $S^{\infty}(z)$
satisfies the Riemann-Hilbert problem (\ref{eq:RHPS}).
\begin{proposition}\label{pro:outer}
The matrix $S^{\infty}(z)$ defined by (\ref{eq:sinf}) satisfies the
Riemann-Hilbert problem (\ref{eq:RHPSinf}).
\end{proposition}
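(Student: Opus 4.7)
The plan is to verify the four conditions of the Riemann-Hilbert problem (\ref{eq:RHPSinf}) one at a time, using that $S^\infty(z)$ is built from the scalar functions $S^\infty_k$ on the Riemann surface $\Lie_N$ evaluated at the three sheets $\xi_m^N(z)$, together with the jump relations (\ref{eq:Sjump}) and the normalization (\ref{eq:sasym}) that have already been stated.

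First I would check analyticity. Away from $[\lambda_1^N,\lambda_2^N]\cup\mathcal{C}$, each $\xi_m^N(z)$ is meromorphic, and the only potential singularities of $S^\infty_i(\xi_m^N(z))$ come from (i) the pole of $\xi_\alpha$ at $z=0$, (ii) the points where some $\xi_m^N$ equals $-1$ or $-1/a$, which are poles of the map $z(\xi)$ in (\ref{eq:curve1}), and (iii) the branch points $\gamma_k^N$ of $\sqrt{\prod(\xi-\gamma_k^N)}$. The numerators $(\xi+1)(\xi+a^{-1})$, $\xi(\xi+a^{-1})$, $\xi(\xi+1)$ in (\ref{eq:Sinfk}) are arranged so that the local degree of the denominator is compensated, which I would verify by a direct degree count at $\xi=\infty$, $\xi=-1$, $\xi=-1/a$ and at $\xi=0$. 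Singularities of type (iii) on $\Lie_N$ lie over the real branch points $\lambda_1^N,\lambda_2^N$ and over $\mathcal{C}$ (which contains the projections of $\lambda_3^N,\lambda_4^N$), so they are allowed by condition 1 and, since $\sqrt{\xi-\gamma_k^N}\sim(z-\lambda_k^N)^{1/4}$ by the local expansion in the proof of Lemma \ref{le:lambdadis}, they give exactly the $(z-\lambda_j^N)^{-1/4}$ behavior required in condition 4.

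Next I would verify the normalization at $z=\infty$. Since $\xi_m^N(\infty)$ are the three limits in (\ref{eq:xiinfty}), formula (\ref{eq:sasym}) immediately gives $S^\infty(z)\to I$; refining this by expanding $\xi_m^N(z)$ to first order in $z^{-1}$ yields $S^\infty(z)=I+O(z^{-1})$, which is condition 3.

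The heart of the proof is the jump condition 2. Column $m$ of $S^\infty(z)$ is $\bigl(S^\infty_i(\xi_m^N(z))\bigr)_i$, so a jump of $S^\infty$ across a curve corresponds to a permutation-with-signs of the $\xi_m^N$'s across that curve, combined with the scalar jumps (\ref{eq:Sjump}) produced by the square root branch cuts chosen on $\Gamma_1,\Gamma_2,\Gamma_c$. Concretely, on $\mathfrak{B}_{k_2}$ the branches $\xi_1^N$ and $\xi_2^N$ swap by (\ref{eq:bound}) while $\xi_3^N$ is analytic; combining this with $S^\infty_i(\xi_{1,\pm}^N)=\mp S^\infty_i(\xi_{2,\mp}^N)$ from (\ref{eq:Sjump}) gives
\begin{equation*}
\bigl(S^\infty_+\bigr)_{i1}=-\bigl(S^\infty_-\bigr)_{i2},\quad
\bigl(S^\infty_+\bigr)_{i2}=\bigl(S^\infty_-\bigr)_{i1},\quad
\bigl(S^\infty_+\bigr)_{i3}=\bigl(S^\infty_-\bigr)_{i3},
\end{equation*}
which is precisely right-multiplication by the third matrix on the right-hand side of (\ref{eq:JSreal}). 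The same bookkeeping on $\mathfrak{B}_{k_3}$ and on $\mathcal{C}$ (where $\xi_{l_1}^N$ and $\xi_{l_2}^N$ swap, and one uses the second line of (\ref{eq:Sjump})) reproduces the remaining jump matrices in (\ref{eq:JSreal}) and (\ref{eq:Jinfty}) for both the cases $a>1$ and $a<1$.

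The main obstacle I expect is purely bookkeeping: one has to track the two labeling conventions $k_2,k_3,l_1,l_2$ simultaneously, keep the $\pm$ signs coming from the square root consistent with the orientation of the cuts $\Gamma_1,\Gamma_2,\Gamma_c$ chosen on $\Lie_N$, and make sure that the cases $a<1$ and $a>1$ produce the two distinct forms of $J_\infty$ in (\ref{eq:Jinfty}). Once these sign choices are fixed (for instance by checking the normalization $S^\infty_k(\xi_m^N(\infty))=\delta_{km}$ on one specific sheet), all the jump verifications reduce to matrix column permutations and the verification of condition 4 is a routine local expansion of the denominator $\sqrt{\prod(\xi-\gamma_k^N)}$ near $\xi=\gamma_j^N$.
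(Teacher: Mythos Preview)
Your proposal is correct and follows essentially the same approach as the paper: the paper's argument is precisely the paragraph preceding the proposition, which records the holomorphicity at $\xi_m^N(0)$, the normalization (\ref{eq:sasym}), and the jump relations (\ref{eq:Sjump}), and then simply asserts that these three ingredients yield the Riemann-Hilbert problem (\ref{eq:RHPSinf}). Your write-up is more explicit than the paper's in spelling out the column-permutation bookkeeping for condition 2 and the local $(z-\lambda_j^N)^{-1/4}$ expansion for condition 4, but the underlying logic is identical.
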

\subsection{Local parametrices near the edge points
$\lambda_k^N$}\label{se:local}

Near the edge points $\lambda_k^N$, the approximation of $S(z)$ by
$S^{\infty}(z)$ failed and we must solve the Riemann-Hilbert problem
exactly near these points and match the solutions to the outer
parametrix (\ref{eq:sinf}) up to an error term of order $O(M^{-1})$.
To be precise, let $\delta>0$ and let $D_k$ be a disc of radius
$\delta$ centered at the point $\lambda_k^N$, $k=1,\ldots,4$. We
would like to construct local parametrices $S^k(z)$ in $D_k$ such
that
\begin{equation}\label{eq:localpara}
\begin{split}
1.\quad &\text{$S^{k}(z)$ is analytic in
$D_k\setminus \left(D_k\cap\left(\mathbb{R}\cup\Xi\cup\mathcal{C}\right)\right)$},\\
2.\quad &S_+^{k}(z)=S_-^{k}(z)J_S(z),\quad z\in D_k\cap\left(\mathbb{R}\cup\Xi\cup\mathcal{C}\right),\\
3.\quad &S^{k}(z)=\left(I+O(M^{-1})\right)S^{\infty}(z),\quad z\in\p
D_k.
\end{split}
\end{equation}
The local parametrices $S^k(z)$ can be constructed by using the Airy
function as in \cite{BKext2} (See also \cite{Lysov}). Since the
construction is identical to that in \cite{BKext2} and \cite{Lysov},
we shall not repeat the details here and refer the readers to these
2 papers.
\subsection{Last transformation of the Riemann-Hilbert problem}
Let us now show that the parametrices constructed in Section
\ref{se:outer} and Section \ref{se:local} are indeed good
approximation to the solution $S(z)$ of the Riemann-Hilbert problem
(\ref{eq:RHPS}).

Let us define $R(z)$ to be the following matrix
\begin{equation}\label{eq:Rx}
\begin{split}
R(z)=\left\{
       \begin{array}{ll}
         S(z)\left(S^{k}(z)\right)^{-1}, & \hbox{$z$ inside $D_k$, $k=1,\ldots,4$;} \\
         S(z)\left(S^{\infty}(z)\right)^{-1}, & \hbox{$z$ outside of $D_k$, $k=1,\ldots,4$.}
       \end{array}
     \right.
\end{split}
\end{equation}
Then the function $R(z)$ has jump discontinuities on the contour
$\Gamma_R$ shown in Figure \ref{fig:gamma}.
\begin{figure}
\centering
\includegraphics[scale=0.5]{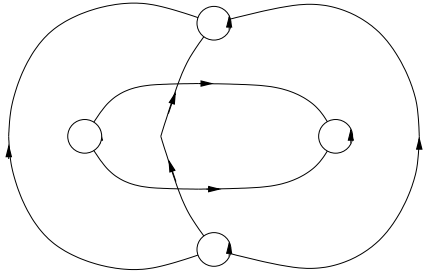}
\caption{The contour $\Gamma_R$.}\label{fig:gamma}
\end{figure}
In particular, $R(z)$ satisfies the Riemann-Hilbert problem
\begin{equation}\label{eq:RHR}
\begin{split}
&1. \quad \text{$R(z)$ is analytic in $\mathbb{C}\setminus\Gamma_R$}\\
&2.\quad R_+(z)=R_-(z)J_R(z)\\
&3. \quad R(z)=I+O(z^{-1}),\quad z\rightarrow\infty, \\
&4.\quad R(z)=O(1),\quad z\rightarrow 0.
\end{split}
\end{equation}
From the definition of $R(z)$ (\ref{eq:Rx}), it is easy to see that
the jumps $J_R(z)$ has the following order of magnitude.
\begin{equation}\label{eq:Jx}
\begin{split}
J_R(z)=\left\{
       \begin{array}{ll}
         I+O(M^{-1}), & \hbox{$z\in\p D_k$, $k=1,\ldots, 4$ ;} \\
         I+O\left(e^{-M\eta}\right), & \hbox{for some fixed $\eta>0$ on
the rest of $\Gamma_R$.}
       \end{array}
     \right.
\end{split}
\end{equation}
Then by the standard theory, \cite{D}, \cite{DKV}, \cite{DKV2}, we
have
\begin{equation}\label{eq:Rest}
\begin{split}
R(z)=I+O\left(\frac{1}{M(|z|+1)}\right),
\end{split}
\end{equation}
uniformly in $\mathbb{C}$.

In particular, the solution $S(z)$ of the Riemann-Hilbert problem
(\ref{eq:RHPS}) can be approximated by $S^{\infty}(z)$ and
$S^{k}(z)$ as
\begin{equation}\label{eq:approxS}
\begin{split}
S(z)=\left\{
       \begin{array}{ll}
         \left(I+O\left(M^{-1}\right)\right)S^{k}(z), & \hbox{$z\in D_k$, $k=1,\ldots,4$;} \\
         \left(I+O\left(M^{-1}\right)\right)S^{\infty}(z), & \hbox{$z$ outside of
$D_k$, $k=1,\ldots,4$.}
       \end{array}
     \right.
\end{split}
\end{equation}
\section{Universality of the correlation kernel}
The the universality result Theorem \ref{thm:main2} can now be
proven by using the asymptotics of the multiple Laguerre polynomials
obtained in the last section. The proof is the same as the ones in
Section 9 of \cite{BKext1}, \cite{BKext2} and \cite{Mo} and we shall
leave the readers to verify the details.
\section{Conclusions}
This paper complements the results obtained in our earlier paper
\cite{Mo} on the universality of complex Wishart ensembles 2 with
distinct eigenvalues and the number of each eigenvalue becomes large
as the size of the ensemble goes to infinity. By using results in
the Stieltjes transform of the limiting eigenvalue distribution, we
were able to overcome difficulties in determining the sheet
structure of the Riemann surface (\ref{eq:curve20}). This, together
with an analysis of the topology of the zero set of a function
$h(x)$ (\ref{eq:hx}), enables us to apply the Deift-Zhou steepest
descent method to obtain asymptotic formula for the eigenvalue
correlation function. Together with \cite{Mo}, we showed that unless
the discriminant of the polynomial (\ref{eq:quartic1}) is zero, the
local eigenvalue statistics are given by the sine-kernel
(\ref{eq:bulk}) in the bulk and the Airy kernel (\ref{eq:edge}) in
the edge of the spectrum. We have also shown that the largest
eigenvalue is distributed according to the Tracy-Widom distribution
(\ref{eq:TW}).

When the discriminant $\Delta$ of (\ref{eq:quartic1}) becomes zero,
the ensemble goes through a phase transition in which the support of
the limiting eigenvalue density changes from 1 interval to 2
intervals. This phase transition can be studied by the method
developed in \cite{BKdou} and the eigenvalue correlation function
will be given by the Pearcey kernel near the critical point of the
spectrum, the point where the support is splitting.

The use of Stieltjes transform to provide a Riemann surface needed
for the implementation of the Riemann-Hilbert analysis can be
generalized to cases where the covariance matrix has more than 2
distinct eigenvalues. However, when the Riemann surface has more
than 2 complex branch points, the determination of the zero set
$\frak{H}$ in (\ref{eq:frakH}) is very complicated and a full
generalization to these cases is still a very challenging open
problem.

\vspace{.25cm}

\noindent\rule{16.2cm}{.5pt}

\vspace{.25cm}

{\small

\noindent {\sl School of Mathematics \\
                       University of Bristol\\
                       Bristol BS8 1TW, UK  \\
                       Email: {\tt m.mo@bristol.ac.uk}

                       \vspace{.25cm}

                       \noindent  22 September  2008}}


\begin{thebibliography}{10}

\bibitem{Ap1} A. I. Aptekarev.
Multiple orthogonal polynomials. {\em   J. Comput. Appl. Math.},
{\bf 99}, (1998), no. 1-2, 423--447.

\bibitem{Ap2} A. I. Aptekarev, A. Branquinho and W. Van Assche.
Multiple orthogonal polynomials for classical weights. {\em  Trans.
Amer. Math. Soc. }, {\bf 355}, (2003), no. 10, 3887--3914.

\bibitem{BS95} Z. D. Bai and J. Silverstein.
On the empirical distribution of eigenvalues of a class of
large-dimensional random matrices. {\em  J. Multivariate Anal.},
{\bf 54}, (1995), no. 2, 175--192.

\bibitem{BS98} Z. D. Bai and J. Silverstein.
No eigenvalues outside the support of the limiting spectral
distribution of large-dimensional sample covariance matrices. {\em
Ann Probab.}, {\bf26}, (1998), no. 1, 316--345.

\bibitem{BS99} Z. D. Bai and J. Silverstein.
Exact separation of eigenvalues of large-dimensional sample
covariance matrices. {\em Ann Probab.}, {\bf27}, (1999), no. 3,
1536--1555.

\bibitem{BaikD} J. Baik.
Painlev\'e formulas of the limiting distributions for nonnull
complex sample covariance matrices. {\em   Duke Math. J.}, {\bf
133}, (2006), no. 2, 205--235.

\bibitem{Baik95} J. Baik, G. Ben-Arous, S. P\'ech\'e.
Phase transition of the largest eigenvalue for nonnull complex
sample covariance matrices. {\em  Ann. Probab.}, {\bf 33}, (2005),
no. 5, 1643--1697.

\bibitem{Baikspike} J. Baik and J. Silverstein.
Eigenvalues of large sample covariance matrices of spiked population
models. {\em   J. Multivariate Anal.}, {\bf 97}, (2006), no. 6,
1382--1408.

\bibitem{BI} P. Bleher and A. Its. Semiclassical asymptotics of orthogonal polynomials, Riemann-Hilbert problem, and universality in the matrix model. {\it Ann. of Maths. (2)}, {\bf 150} (1999), no. 1, 185--266.

\bibitem{BKMOP} P. Bleher, A. B. J. Kuijlaars.
Random matrices with external source and multiple orthogonal
polynomials. {\em Int. Math. Res. Not. }, (2004), no. 3, 109--129.

\bibitem{BKext1} P. Bleher, A. B. J. Kuijlaars.
Large $n$ limit of Gaussian random matrices with external source. I.
{\em  Commun. Maths. Phys.}, {\bf 252}, (2005), no. 1-3, 43--76.

\bibitem{BKext2} A. I. Aptekarev, P. Bleher and A. B. J. Kuijlaars.
Large $n$ limit of Gaussian random matrices with external source.
II. {\em  Commun. Maths. Phys.}, {\bf 259}, (2005), 367--389.

\bibitem{BKdou} P. Bleher, A. B. J. Kuijlaars.
Large $n$ limit of Gaussian random matrices with external source.
III. Double scaling limit. {\em  Commun. Maths. Phys.}, {\bf 259},
(2005), no. 2, 481--517.

\bibitem{BK1} P. Bleher, A. B. J. Kuijlaars.
Integral representations for multiple Hermite and multiple Laguerre
polynomials. {\em   Ann. Inst. Fourier (Grenoble) }, {\bf 55},
(2005), no. 6, 2001--2014.

\bibitem{CS} S. Choi and J. Silverstein.
Analysis of the limiting spectral distribution of large-dimensional
random matrices. {\em J. Multivariate Anal.}, {\bf 54}, (1995), no.
2, 295--309.

\bibitem{DF} P. Desrosiers and P. J. Forrester.
Asymptotic correlations for Gaussian and Wishart matrices with
external source. {\em Int. Math. Res. Not.}, Art. ID 27395, (2006),
43 pp.

\bibitem{D} P. Deift. {\em Orthogonal polynomials and random matrices: A Riemann-Hilbert approach}. Courant lecture notes 3. New York University. (1999).

\bibitem{DKV} P. Deift, T. Kriecherbauer, K. T. R. McLaughlin, S. Venakides and X. Zhou. Strong asymptotics of orthogonal polynomials with respect to exponential weights. {\it Comm. Pure Appl. Math.}, {\bf 52} (1999), no. 12, 1491--1552.

\bibitem{DKV2} P. Deift, T. Kriecherbauer, K. T. R. McLaughlin, S. Venakides and X. Zhou. Uniform asymptotics for polynomials orthogonal with respect to varying exponential weights and applications to universality questions in random matrix theory. {\it Comm. Pure Appl. Math.}, {\bf 52} (1999), no. 11, 1335--1425.

\bibitem{El} N. El Karoui.
Tracy-Widom limit for the largest eigenvalue of a large class of
complex sample covariance matrices. {\em Ann. Probab. }, {\bf 35},
(2007), no.2, 663--714.

\bibitem{El03} N. El Karoui.
On the largest eigenvalue of Wishart matrices with identity
covariance when $n$, $p$ and $p/n\rightarrow\infty$. {\em
arXiv:math.ST/0309355.}

\bibitem{Fo93} P. J. Forrester.
The spectrum edge of random matrix ensembles. {\em Nuclear Phys. B},
{\bf 402}, (1993), 709--728.

\bibitem{vanGerKuij} J. S. Geronimon, A. B. J. Kuijlaars and W. Van
Assche. Riemann-Hilbert problems for multiple orthogonal
polynomials. pp. 23--59 in: {\em Nato ASI special function 2000.
Current perspective and future directions.} (J. Bustoz, M. E. H.
Ismail and S. K. Suslov eds.), Nato Science series II Vol 30, Kluwer
Academic Publishers. (2001)

\bibitem{J} I. Johnstone.
On the distribution of the largest principal component. {\em Ann.
Statist.}, {\bf 29}, (2001), 295--327.

\bibitem{Jo} K. Johansson.
Shape fluctuations and random matrices. {\em Commun. Math. Phys.},
{\bf 209}, (2000), 437--476.

\bibitem{Lysov} V. Lysov and F. Wielonsky. Strong asymptotics for multiple Laguerre
polynomials. {\em Constr. Approx.}, {\bf 28}, (2008), 61--111.

\bibitem{Mc} K. T. R. Mclaughlin. Asymptotic analysis of random matrices with external source and a family of algebraic curves.
{\em nonlinearity}, {\bf 20}, (2007), no. 7, 1547--1571.

\bibitem{Mo} M. Y. Mo. Universality in Complex Wishart ensembles: The 2 cut
case. {\em ArXiv: arXiv:0809.3750}.

\bibitem{Muir} R. Muirhead. {\em Aspects of multivariate statistical
theory.} Wiley series in Probability and Mathematical Statistics,
(1982).

\bibitem{S95} J. Silverstein.
Strong convergence of the empirical distribution of eigenvalues of
large-dimensional random matrices. {\em J. Multivariate Anal.}, {\bf
55}, (1995), no. 2, 331--339.

\bibitem{TW1} C. Tracy and H. Widom.
Level-spacing distribution and the Airy kernel. {\em Commun. Math.
Phys.}, {\bf 159}, (1994), 151--174.

\bibitem {W1} D. Wang. Spiked Models in Wishart Ensemble. {\em
arXiv:0804.0889.}

\bibitem {W2} D. Wang. The Largest Sample Eigenvalue Distribution in the rank 1 Quaternionic Spiked Model of Wishart Ensemble.
{\em arXiv:0711.2722.}
\end{thebibliography}
\end{document}